\documentclass{article}
\usepackage[utf8]{inputenc}
\usepackage{enumerate}
\usepackage{amssymb, bm}
\usepackage{amsthm}
\usepackage{amsmath}
\usepackage{tikz-cd} 
\usepackage{bbold}
 \usepackage[all]{xy}
\title{A study of nefness in higher codimension}
\author{Xiaojun WU}
\date{\today}
\newtheorem{mythm}{Theorem}
\newtheorem{mylem}{Lemma}
\newtheorem{myprop}{Proposition}
\newtheorem{mycor}{Corollary}
\newtheorem{mydef}{Definition}
\newtheorem{myrem}{Remark}
\setlength{\oddsidemargin}{3.5mm}
\setlength{\evensidemargin}{3.5mm}
\setlength{\textwidth}{15cm}
\setlength{\textheight}{24cm}
\setlength{\topmargin}{-2cm}

\begin{document}
\def\cI{\mathcal{I}}
\def\Z{\mathbb{Z}}
\def\Q{\mathbb{Q}}  \def\C{\mathbb{C}}
 \def\R{\mathbb{R}}
 \def\N{\mathbb{N}}
 \def\H{\mathbb{H}}
  \def\P{\mathbb{P}}
 \def\rC{\mathcal{C}}
  \def\nd{\mathrm{nd}}
  \def\d{\partial}
 \def\dbar{{\overline{\partial}}}
\def\dzbar{{\overline{dz}}}
 \def\ii{\mathrm{i}}
  \def\d{\partial}
 \def\dbar{{\overline{\partial}}}
\def\dzbar{{\overline{dz}}}
\def \ddbar {\partial \overline{\partial}}
\def\cN{\mathcal{N}}
\def\cE{\mathcal{E}}  \def\cO{\mathcal{O}}
\def\cF{\mathcal{F}}
\def\P{\mathbb{P}}
\def\cI{\mathcal{I}}
\def \loc{\mathrm{loc}}
\def \log{\mathrm{log}}
\def \cC{\mathcal{C}}
\bibliographystyle{plain}
\def \dim{\mathrm{dim}}
\def \RHS{\mathrm{RHS}}
\def \liminf{\mathrm{liminf}}
\def \ker{\mathrm{Ker}}
\def \Pic{\mathrm{Pic}}
\def \Alb{\mathrm{Alb}}
\def \tors{\mathrm{Tors}}
\def \ch{\mathrm{ch}}
\def \Id{\mathrm{Id}}
\def \td{\mathrm{td}}
\def \id{\mathrm{id}}
\maketitle
\begin{abstract}
In this work, following the fundamental work of Boucksom, we construct the nef cone of a compact complex manifold in higher codimension and give explicit examples for which these cones are different.
In the third and fourth sections, we give different versions of Kawamata-Viehweg vanishing theorems regarding nefness in higher codimension and numerical dimensions.
We also show by examples the optimality of the divisorial Zariski decomposition given in \cite{Bou04}. 
\end{abstract}
\section{Introduction}
\paragraph{}
One of the reformulations of the Kodaira embedding theorem is that
a compact complex manifold is projective if and only if the K\"ahler cone, i.e.\ the convex cone spanned by K\"ahler forms in $H^2(X,\mathbb{R})$, contains a rational point (i.e., an element in $H^2(X, \mathbb{Q})$).

As a general matter of fact, it is obviously interesting to study positive cones attached to compact complex manifolds and relate them with the geometry of the manifold.
In classical algebraic or complex geometry, the emphasis is on two types of positive cones: the nef and psef cones, defined as the closed convex cones spanned by nef classes and psef classes, respectively.
The nef cone is of course contained in the psef cone.

The work of Boucksom \cite{Bou04} defines and studies the so-called modified nef cone for an arbitrary compact complex manifold.
Thanks to this definition, Boucksom was able to show the existence of a divisorial Zariski decomposition for any psef class (i.e., any cohomology class containing a positive current). The modified cone just sits between the nef and psef cones.

Inspired by Boucksom's definition, in Section 2, we introduce the concept of a nef cone in arbitrary codimension for any compact complex manifold, which is an interpolation between the above positive cones.
\paragraph{}
\textbf{Definition A.}
Let $\alpha \in H^{1,1}_{BC}(X, \R)$ be a psef class. We say that $\alpha$ is nef in codimension $k$, if for any irreducible analytic subset $Z \subset X$ of codimension at most equal to $k$, we have the generic minimal multiplicity of $\alpha$ along $Z$ as (defined in \cite{Bou04})
$$\nu(\alpha, Z)=0.$$
\paragraph{}
With this terminology, the nef cone is the nef cone in codimension $n$, where $n$ is the complex dimension of the manifold, while the psef cone is the nef cone in codimension 0, and the modified nef cone is the nef cone in codimension 1.
We notice that the algebraic analogue in the projective case is introduced in \cite{Nak}.
In Section 4, we show that these cones are in general different and construct explicit examples where these cones are different.

Inspired by the work of \cite{Cao17} and using Guan-Zhou’s solution of Demailly’s strong openness conjecture, we get the following Kawamata-Viehweg vanishing theorem in Section 3. The proof follows Cao's proof closely:
\paragraph{}
\textbf{Theorem A.}
Let $(L,h)$ be  a  pseudoeffective  line  bundle  on  a  compact K\"ahler $n$-dimensional manifold $X$ with singular positive metric $h$.  Then the morphism induced by the inclusion $K_X \otimes L \otimes \cI(h) \to K_X \otimes L$
$$H^q(X,K_X \otimes L \otimes \cI(h)) \to H^q(X,K_X \otimes L )$$ vanishes for every $q \geq n-\nd(L) + 1$.
\paragraph{}
As an application, we obtain in Section 4 the following generalisation from the nef case to the psef case of a similar result stated in \cite{DP02}.
\paragraph{}
\textbf{Theorem B.}
Let $(X,\omega)$ be a compact K\"ahler manifold of dimension $n$ and $L$ a line bundle on $X$ that is nef in codimension 1. Assume that $\langle L^2 \rangle \neq 0$ where $\langle \bullet \rangle$ is the positive product defined in \cite{Bou02}. Assume that there exists an effective integral divisor $D$ such that $c_1(L)=c_1(D)$. Then
$$H^q(X,K_X+L) = 0$$
for $q \ge n-1$.
\paragraph{}
The proof of the above theorem is an induction on the dimension, using Theorem A.
A difference compared with the nef case treated in \cite{DP02} is that we need passing from an intersection number to a positive product (or movable intersection number), which is a non-linear operation. Nevertheless, under a condition of nefness in higher codimension, we get the following estimate.
\paragraph{}
\textbf{Lemma A.}
Let $\alpha$ be a nef class in codimension $p$ on a compact K\"ahler manifold $(X, \omega)$, then for any $k \le p$ and $\Theta$ any positive closed $(n-k,n-k)-$form we have
$$(\alpha^k, \Theta) \geq \langle \alpha^k, \Theta \rangle.$$
\paragraph{}
With this inequality, the intersection number calculation in \cite{DP02} is still valid, and thus the cohomology calculations can be recycled.

Observe that a current with minimal singularities need not have analytic singularities for every big class $\alpha$ that is nef in codimension one but not nef in codimension two; such an example was given by \cite{Nak}, and also observed by Matsumura \cite{Mat}. 

As a consequence of Matsumura's observation, the assumption of our Kawamata-Viehweg vanishing theorem that the line bundle is numerically equivalent to an effective integral divisor is actually required.
In the nef case considered in \cite{DP02}, the authors deduce from their assumption that the line bundle $L$ is nef with $(L^2) \neq 0$ that
$L$ is numerically equivalent to an effective integral divisor $D$, and that there exists a positive singular metric $h$ on $L$  such that $\cI(h)=\cO(-D)$.

However, for a big line bundle $L$ that is nef in codimension one but not nef in codimension two over an arbitrary compact K\"ahler manifold $(X, \omega)$, we have $\langle L^2 \rangle \neq 0$ and $\frac{i}{2 \pi} \Theta(L,h_{\min})$ need not be a current associated with an effective integral divisor. 

Another by-product is a (probably already known) example of a projective manifold $X$ with $-K_X$ psef, for which the Albanese morphism is not surjective.
It was proven in \cite{Cao13}, \cite{Paun12} (and \cite{Zha} for the projective case) that the Albanese morphism of a compact K\"ahler manifold with $-K_X$ nef is always surjective.
Thus replacing nefness by pseudoeffectivity in the study of Albanese morphism seems to be a non-trivial problem. 

\textbf{Acknowledgement} I thank Jean-Pierre Demailly, my PhD supervisor, for his guidance, patience and generosity. 
I would like to thank Junyan Cao, Henri Guenancia and Andreas Höring for some very useful suggestions on the previous draft of this work.
I would also like to express my gratitude to colleagues of Institut Fourier for all the interesting discussions we had. This work is supported by the PhD program AMX of \'Ecole Polytechnique and Ministère de l'Enseignement Supérieur et de la Recherche et de l’Innovation, and the European Research Council grant ALKAGE number 670846 managed by J.-P. Demailly.
We thank the anonymous reviewer for a very careful reading of this paper, and for insightful comments and suggestions.
\section{Nefness in higher codimension}
We first recall some technical preliminaries introduced in \cite{Bou04}. Throughout this paper, $X$ is assumed to be a compact complex manifold equipped with
some reference Hermitian metric $\omega$ (i.e.\ a smooth positive
definite $(1,1)$-form); we usually take $\omega$ to be
K\"ahler if $X$ possesses such metrics. The Bott-Chern cohomology group $H^{1,1}_{BC}(X, \R)$ is the space of $d$-closed smooth (1,1)-forms modulo
$i\d\dbar$-exact ones.
By the quasi-isomorphism induced by the inclusion of smooth forms
into currents, $H^{1,1}_{BC}(X, \R)$ can also be seen as the space of
$d$-closed $(1,1)$-currents modulo $i \d \dbar$-exact ones.
A cohomology class $\alpha \in H^{1,1}_{BC}(X,\R)$ is said to be pseudo-effective iff it contains a positive current; $\alpha$ is nef iff, for each $\varepsilon >0$, $\alpha$ contains a smooth form $\alpha_{\varepsilon}$ such that $\alpha_{\varepsilon} \geq -\varepsilon \omega$; $\alpha$ is big iff it contains a K\"ahler current, i.e.\ a closed $(1,1)$-current $T$ such that $T \geq \varepsilon \omega$ for $\varepsilon >0$ small enough. 

\begin{mydef}{\rm(\cite{DPS01})}
Let $\varphi_1,\varphi_2$ be two quasi-psh functions on $X$ $($i.e.\ $i \d \dbar \varphi_i \geq -C \omega$ in the sense of currents for some $C \geq 0)$. The function $\varphi_1$ is said to be less singular than $\varphi_2$ $($one then writes $\varphi_1 \preceq \varphi_2)$ if $\varphi_2 \leq \varphi_1+C_1$ for some constant $C_1$. Let $\alpha$ be a fixed psef class in $H^{1,1}_{BC}(X,\R)$. Given $T_1,T_2,\theta \in \alpha$ with $\theta$ smooth, and $T_i=\theta+i\d \dbar \varphi_i$ with $\varphi_i$ quasi-psh $(i=1,2)$, we write $T_1 \preceq T_2$ iff $\varphi_1 \preceq \varphi_2$ $($notice that for any choice of $\theta$, the potentials $\varphi_i$ are defined up to smooth bounded functions, since $X$ is compact$)$. If~$\gamma$ is a smooth real $(1,1)$-form on $X$, the collection of all potentials $\varphi$ such that $\theta +i \d \dbar \varphi \geq \gamma$ admits a minimal element $T_{min,\gamma}$ for the pre-order relation~$\preceq$, constructed as the semi-continuous upper envelope of the subfamily of potentials $\varphi\le 0$ in the collection. 
\end{mydef}
\begin{mydef} {\rm (Minimal multiplicities)}.
The minimal multiplicity at $x \in X$ of the pseudo-effective class $\alpha \in H^{1,1}_{BC}(X,\R)$ is defined as
$$\nu(\alpha, x) := \sup_{\varepsilon>0} \nu(T_{\min,\varepsilon},x)$$
where $T_{\min, \varepsilon}$ is the minimal element $T_{\min, -\varepsilon \omega}$ in the above definition and $\nu(T_{\min, \varepsilon},x)$ is the Lelong number of $T_{\min,\varepsilon}$ at $x$. When $Z$ is an irreducible analytic subset, we define the generic minimal multiplicity of $\alpha$ along $Z$as
$$\nu(\alpha, Z) := \inf\{\nu(\alpha, x),x \in Z\}.$$
\end{mydef}
When $Z$ is positive dimensional,
there exists for each $\ell\in \N^*$ a countable union of proper analytic subsets of $Z$ denoted by $Z_\ell= \bigcup_{p} Z_{\ell,p}$ such that $\nu(T_{\min, \frac{1}{\ell}}, Z):= \inf_{x \in Z} \nu(T_{\min, \frac{1}{\ell}},x)=\nu(T_{\min, \frac{1}{\ell}},x)$ for $x \in Z \smallsetminus Z_\ell$.
By construction, when $\varepsilon_1 < \varepsilon_2$, $T_{\min, \varepsilon_1} \succeq T_{\min, \varepsilon_2}$.
Hence for a very general point $x \in Z \smallsetminus\bigcup_{\ell \in \N^*}Z_\ell$,
$$\nu (\alpha,Z)\leq\nu(\alpha,x)=\sup_{\ell} \nu(T_{\min, \frac{1}{\ell}},Z).$$
On the other hand, for any $y \in Z$,
$$\sup_{\ell} \nu(T_{\min, \frac{1}{\ell}},Z)\leq \sup_{\ell} \nu(T_{\min, \frac{1}{\ell}},y)=\nu(\alpha,y).$$
In conclusion, $\nu(\alpha,Z)=\nu (\alpha, x)$ for a very general point $x \in Z \smallsetminus\bigcup_{\ell \in \N^*}Z_\ell$
and $\nu(\alpha, Z)=\sup_{\varepsilon} \nu (T_{\min, \varepsilon}, Z)$.

Now we can define the concept of nefness in higher codimension implicitly used in \cite{Bou04}. It is the generalisation of the concept of ``modified nefness'' to the higher codimensional case.
\begin{mydef}
Let $\alpha \in H^{1,1}_{BC}(X, \R)$ be a psef class. We say that $\alpha$ is nef in codimension $k$, if for every irreducible analytic subset $Z \subset X$ of codimension at most equal to $k$, we have
$$\nu(\alpha, Z)=0.$$
\end{mydef}
We denote by $\cN_k$ the cone generated by nef classes in codimension $k$.
By Proposition 3.2 in \cite{Bou04}, a psef class $\alpha$ is nef iff for any $x \in X$, $\nu(\alpha,x)=0$.
By our definition, psef is  equivalent to nef in codimension $0$, and nef is equivalent to nef in codimension $n:= \dim_{\C} X$. In this way, we get a bunch of positive cones on $X$, satisfying the inclusion relations
$$\cN=\cN_n \subset \cdots \subset \cN_1 \subset \cN_0=\cE$$
where $\cN$ and $\cE$ are cones of nef and psef classes respectively.
By a proof similar to those of Propositions 3.5, 3.6 in \cite{Bou04}, we get
\begin{myprop}
$(1)$ For every $x\in X$ and every irreducible analytic subset $Z$, the map $\cE \to \R^+$ defined on the cone $\cE$ of psef classes by
$\alpha \mapsto \nu(\alpha, Z)$ is convex and homogeneous. It is continuous on the interior $\cE^{\circ}$, and lower semi-continuous on the whole of $\cE$.
\vskip3pt
\noindent
$(2)$ If $T_{\min}\in\alpha$ is a positive current with minimal singularities,
we have $\nu(\alpha, Z)\leq \nu(T_{\min},Z)$. 
\vskip3pt
\noindent
$(3)$ If $\alpha$ is moreover big, we have $\nu(\alpha, Z)=\nu(T_{\min},Z)$.
\end{myprop}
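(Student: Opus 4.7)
The plan is to adapt Boucksom's proofs of Propositions~3.5 and~3.6 of \cite{Bou04}, which establish the corresponding statements when $Z=\{x\}$, and to pass from points to irreducible analytic subsets using the observation recorded just before the proposition: for each of the countably many currents that will appear in the argument (the $T_{\min,\varepsilon}$ for $\varepsilon$ in a fixed countable dense subset of $\R_{>0}$, together with $T_{\min}$ and an auxiliary K\"ahler current introduced in part~(3)), the locus where the Lelong number at a point strictly exceeds the generic value along $Z$ is a countable union of proper analytic subsets of $Z$. Their total union is still such a countable union, so at any very general $x\in Z$ avoiding it, every pointwise Lelong-number inequality produced below transfers automatically to an inequality between the corresponding generic Lelong numbers along $Z$.

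For~(1), homogeneity is immediate from the identity $T_{\min,\varepsilon}^{\lambda\alpha}=\lambda\,T_{\min,\varepsilon/\lambda}^{\alpha}$. For convexity it suffices to check subadditivity: the sum $T_{\min,\varepsilon/2}^{\alpha_1}+T_{\min,\varepsilon/2}^{\alpha_2}$ lies in $\alpha_1+\alpha_2$ and has curvature $\geq-\varepsilon\omega$, hence is majorized in $\preceq$ by $T_{\min,\varepsilon}^{\alpha_1+\alpha_2}$; evaluating Lelong numbers at a very general $x\in Z$ and taking $\sup_\varepsilon$ yields $\nu(\alpha_1+\alpha_2,Z)\leq\nu(\alpha_1,Z)+\nu(\alpha_2,Z)$. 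For lower semi-continuity, I would write $\alpha_j=\alpha+\{\beta_j\}$ with smooth $\beta_j$ satisfying $-C_j\omega\leq\beta_j\leq C_j\omega$ and $C_j\to 0$. Then $T_{\min,\delta}^{\alpha_j}-\beta_j\in\alpha$ has curvature $\geq-(\delta+C_j)\omega$, whence $\nu(T_{\min,\delta+C_j}^{\alpha},Z)\leq\nu(T_{\min,\delta}^{\alpha_j},Z)\leq\nu(\alpha_j,Z)$; for any $\delta'>0$ and $j$ large with $C_j<\delta'$, choosing $\delta=\delta'-C_j$ gives $\nu(T_{\min,\delta'}^{\alpha},Z)\leq\liminf_j\nu(\alpha_j,Z)$, and taking $\sup_{\delta'}$ delivers the lsc inequality. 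Continuity on $\cE^{\circ}$ then follows from the classical fact that a convex function is continuous on the interior of its effective domain.

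For~(2), a positive current $T_{\min}$ is itself an admissible element in the family defining $T_{\min,\varepsilon}$ for any $\varepsilon>0$, so $T_{\min,\varepsilon}\preceq T_{\min}$ and $\nu(T_{\min,\varepsilon},x)\leq\nu(T_{\min},x)$ pointwise; evaluating at a very general $x\in Z$ and then taking $\sup_\varepsilon$ yields $\nu(\alpha,Z)\leq\nu(T_{\min},Z)$. For~(3), using bigness I fix a K\"ahler current $T'\in\alpha$ with $T'\geq c\omega$ for some $c>0$ and set $\eta_\varepsilon:=\varepsilon/(c+\varepsilon)$, so that $S_\varepsilon:=(1-\eta_\varepsilon)\,T_{\min,\varepsilon}^{\alpha}+\eta_\varepsilon\,T'$ has curvature $\geq 0$ and is a positive current in $\alpha$. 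Minimality of singularities of $T_{\min}$ gives $\nu(T_{\min},x)\leq(1-\eta_\varepsilon)\,\nu(T_{\min,\varepsilon}^{\alpha},x)+\eta_\varepsilon\,\nu(T',x)$ at every $x$; letting $\varepsilon\to 0$ at a very general $x\in Z$, where $\nu(T',x)$ is finite, yields $\nu(T_{\min},x)\leq\nu(\alpha,x)$, hence $\nu(T_{\min},Z)\leq\nu(\alpha,Z)$, and combining with~(2) completes the proof.

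The main obstacle I anticipate is not in any single computation but in the ``very general point'' bookkeeping: I must select $x\in Z$ simultaneously avoiding the exceptional analytic subsets of $T_{\min,\varepsilon}$ for every $\varepsilon$ in a prescribed countable dense set, plus those of $T_{\min}$ and $T'$. Since each such exceptional set is itself a countable union of proper analytic subsets of $Z$, the total union is of the same form, and a very general $x$ always exists. Once this is arranged, the transcription of Boucksom's pointwise arguments from \cite{Bou04} goes through essentially verbatim.
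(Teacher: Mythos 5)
Your proposal is correct and is exactly the intended argument: the paper gives no written proof, deferring to Propositions 3.5 and 3.6 of \cite{Bou04}, and your write-up is precisely the adaptation of Boucksom's pointwise proofs to generic multiplicities along $Z$, with the ``very general point'' bookkeeping handled correctly via countable unions of proper analytic subsets of $Z$.
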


\noindent
The following lemma is a direct application of the proposition.
\begin{mylem}
Let $Y \subset X$ be a smooth submanifold of $X$ and $\pi: \tilde{X} \to X$ be the blow-up of $X$ along~$Y$. We denote by $E$ the exceptional divisor. If $\alpha \in H^{1,1}_{BC}(X, \R)$ is a big class, we have
$$\nu(\alpha, Y)=\nu(\pi^* \alpha,E).$$
For $Z$ any irreducible analytic set not included in $Y$, we denote by $\tilde{Z}$ the strict transform of $Z$. Then
$$\nu(\alpha, Z)=\nu(\pi^* \alpha,\tilde{Z}).$$
For $W$ any irreducible analytic set in $Y$, we have
$$\nu(\alpha, W)=\nu(\pi^* \alpha,\P(N_{Y/X}|_W)).$$
\end{mylem}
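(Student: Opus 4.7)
I would reduce the three equalities to Proposition~1(3) combined with the identification $T_{\min}^{\pi^*\alpha}=\pi^*T_{\min}^\alpha$. Since $\alpha$ is big and $\pi$ is a modification, $\pi^*\alpha$ is also big (the volume is a modification invariant), so Proposition~1(3) applies on both $X$ and $\tilde X$, giving
\[
\nu(\alpha,Z)=\nu(T_{\min}^\alpha,Z)\qquad\text{and}\qquad\nu(\pi^*\alpha,\tilde Z)=\nu(T_{\min}^{\pi^*\alpha},\tilde Z)
\]
for any irreducible analytic subsets.

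The central step is establishing $T_{\min}^{\pi^*\alpha}=\pi^*T_{\min}^\alpha$ on $\tilde X$. Fix a smooth representative $\theta\in\alpha$ and write $T_{\min}^\alpha=\theta+i\partial\bar\partial\varphi^\alpha_{\min}$ and $T_{\min}^{\pi^*\alpha}=\pi^*\theta+i\partial\bar\partial\psi^{\pi^*\alpha}_{\min}$. The inequality $\psi^{\pi^*\alpha}_{\min}\geq\pi^*\varphi^\alpha_{\min}$ is immediate, since $\pi^*\varphi^\alpha_{\min}\leq 0$ is a valid candidate in the upper envelope defining $\psi^{\pi^*\alpha}_{\min}$. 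For the reverse, given any candidate $\psi\leq 0$ on $\tilde X$ with $\pi^*\theta+i\partial\bar\partial\psi\geq 0$, set
\[
\bar\psi(y):=\sup_{\tilde x\in\pi^{-1}(y)}\psi(\tilde x),\qquad y\in X.
\]
Outside $Y$, $\pi$ is a biholomorphism, so $\bar\psi$ is quasi-psh on $X\setminus Y$ with $\theta+i\partial\bar\partial\bar\psi\geq 0$; moreover $\bar\psi\leq 0$ globally. Because $Y$ has codimension at least two in $X$, Skoda's extension theorem allows $\bar\psi$ to extend uniquely to a quasi-psh function on all of $X$ satisfying the same curvature inequality in the distributional sense. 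By the defining property of the upper envelope, $\bar\psi\leq\varphi^\alpha_{\min}$, which yields
\[
\psi(\tilde x)\leq\bar\psi(\pi(\tilde x))\leq\varphi^\alpha_{\min}(\pi(\tilde x))=(\pi^*\varphi^\alpha_{\min})(\tilde x),
\]
and hence $\psi^{\pi^*\alpha}_{\min}\leq\pi^*\varphi^\alpha_{\min}$, completing the identification.

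Once $T_{\min}^{\pi^*\alpha}=\pi^*T_{\min}^\alpha$ is known, each of the three equalities reduces to the pullback formula for generic Lelong numbers $\nu(\pi^*T,\tilde Z)=\nu(T,Z)$, applied to $T=T_{\min}^\alpha$: $\tilde Z=E$ when $Z=Y$, $\tilde Z$ the strict transform when $Z\not\subset Y$, and $\tilde Z=\pi^{-1}(W)=\P(N_{Y/X}|_W)$ when $Z=W\subset Y$. This pullback formula is a routine local computation in blow-up coordinates. One then concludes
\[
\nu(\pi^*\alpha,\tilde Z)=\nu(\pi^*T_{\min}^\alpha,\tilde Z)=\nu(T_{\min}^\alpha,Z)=\nu(\alpha,Z).
\]

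The main obstacle is the envelope identity $T_{\min}^{\pi^*\alpha}=\pi^*T_{\min}^\alpha$, and specifically the verification that the sup-over-fibers $\bar\psi$ is genuinely quasi-psh on the base; codimension at least two of $Y$ is essential for invoking Skoda's extension theorem, while in the trivial codimension-one situation $\pi$ is itself a biholomorphism.
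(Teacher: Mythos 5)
Your proposal is correct and follows essentially the same route as the paper: bigness reduces the generic minimal multiplicities to those of currents with minimal singularities via Proposition 1(3), the correspondence between positive currents in $\alpha$ and in $\pi^*\alpha$ identifies the minimal objects, and the three equalities then follow from the local computation of generic Lelong numbers under the blow-up (which the paper carries out explicitly, including the ideal computation $\pi^*(\cI_W)\cdot\cO_{\tilde X}=\cI_{\P(N_{Y/X}|_W)}$ for the third case). The only cosmetic difference is that the paper packages the middle step as the bijection $\pi^*/\pi_*$ on positive currents rather than as the envelope identity $T^{\pi^*\alpha}_{\min}=\pi^*T^{\alpha}_{\min}$; also note that since $\bar\psi\le 0$, the extension across $Y$ is the standard extension of psh functions bounded above near an analytic set, so the codimension $\ge 2$ hypothesis is not actually the operative one there.
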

\begin{proof}
Since $\alpha$ is big, we know that by taking a suitable regularisation, there exists a K\"ahler current $T\in\alpha$ with analytic singularities. The pullback $\pi^*T$ of this current is a smooth K\"ahler current on some dense open set $U$ where $\pi$ is a biholomorphism.
Hence the volume of $\pi^* \alpha$ defined as $\int_{T \in \pi^* \alpha, T \geq 0,} T_{ac}^n$ (ac means the absolute part of the current) is larger than the mass of $\pi^* T$ on $U$ which is strictly positive. By \cite{Bou02} $\pi^* \alpha$ is thus big.

By the proposition, we have
$$\nu(\alpha, Y)=\inf_{T \in \alpha}\nu(T,Y), \quad
\nu(\pi^* \alpha,E)=\inf_{S \in \pi^* \alpha} \nu(S,E). $$
On the other hand, the push forward and pull back operators acting on positive $(1,1)$ currents induce bijections between positive currents in the class $\alpha$ and positive currents in the class $\pi^* \alpha$.
Let $\theta \in \alpha$ be a smooth form such that $T= \theta + i \d \dbar \varphi$.
We recall that for any irreducible analytic set $W$ with local generators $(g_1, \cdots, g_r)$ near a regular point $w \in W$, 
the generic Lelong number along $W$ is the largest $\gamma$ such that $\varphi \leq \gamma \log(\sum |g_i|^2) + O(1)$ near $w$.
Since $\pi^* (g_1, \cdots, g_r) \cdot \cO_{\tilde{X}}= \cI_E$, we have
 $\nu(T, Y)=\nu(\pi^* T, E)$.
In particular, this implies that 
$$\nu(\alpha, Y)=\nu(\pi^* \alpha,E).$$
For $W$ any irreducible analytic set in the centre $Y$, since the exceptional divisor is isomorphic to $\P(N_{Y/X})$, the preimage of $W$ under the blow-up is isomorphic to $\P(N_{Y/X}|_W)$. In suitable local coordinates
$(z_1, \cdots, z_n)$ on $X$ and $(w_1, \cdots, w_n)$ on $\tilde X$, the
blow-up map is given by
$$\pi(w_1, \cdots,w_n )=(w_1, w_1 w_2, \cdots, w_1 w_s; w_{s+1}, \cdots, w_n).$$
In these coordinates, the centre $Y$ is given by the zero variety $V(z_{s+1}, \cdots, z_n)$. Assume that in this chart, $W=V(z_{s+1}, \cdots, z_n;f_1, \cdots, f_r)$ where $f_i$ is a function of $z_1, \cdots, z_s$ (as we can assume without loss of generality).
Then $$\pi^*(\cI_W) \cdot \cO_{\tilde{X}}=(w_1, f_1(w_1, w_1 w_2, \cdots, w_1 w_s), \cdots, f_r(w_1, w_1 w_2, \cdots, w_1 w_s))=\cI_{\P(N_{Y/X}|_W)}.$$
In particular, this implies that
$$\nu(\alpha, W)=\nu(\pi^* \alpha,\P(N_{Y/X}|_W)).$$

For the second statement, we just observe that the generic Lelong number along $Z$ (resp. $\tilde{Z}$) is equal to the Lelong number at some very general point.
Since $Z$ is not contained in $Y$, we can assume without loss of generality that the very general point is not in $Y$ (resp.\ $E$).
Since the Lelong number is a coordinate invariant local property, for such very general point $x \in \tilde{Z}$ near which $\pi$ is a local biholomorphism and any $T \in \alpha, T\geq 0 $, $\nu (T, Z)=\nu(T, \pi(x))=\nu(\pi^* T, x)=\nu (\pi^* T, \tilde{Z})$.
Hence we have
$$\nu(\alpha, Z)=\nu(\pi^* \alpha,\tilde{Z}).$$
\end{proof}
As a corollary, we find
\begin{mycor}{\it
Let $\mu: \tilde{X} \to X$ be a composition of finitely many blow-ups with smooth centres in~$X$. 
If $\alpha \in H^{1,1}_{BC}(X, \R)$ is a big class on $X$ such that $\mu^* \alpha$ is nef in codimension $k$, then $\alpha$ is a nef class in codimension~$k$.}
\end{mycor}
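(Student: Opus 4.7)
The plan is to reduce immediately to the case of a single blow-up and then handle this case by a codimension bookkeeping using the three formulas of the preceding lemma. Writing $\mu = \pi_m \circ \cdots \circ \pi_1$ as a composition of blow-ups with smooth centres, and setting $\alpha_j := (\pi_1 \circ \cdots \circ \pi_j)^* \alpha$ on $X_j$, I would argue downwards from $j=m$ to $j=0$: since $\alpha$ is big, the argument already given in the proof of the lemma (pulling back a K\"ahler current with analytic singularities and comparing volumes via \cite{Bou02}) shows that each intermediate pullback $\alpha_j$ is again big, so the lemma applies at every step. Hence it suffices to prove the statement for a single blow-up $\pi : \tilde{X} \to X$ along a smooth centre $Y$, with $\alpha$ big and $\pi^*\alpha$ nef in codimension~$k$.

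Fix an irreducible analytic subset $Z \subset X$ with $\mathrm{codim}_X(Z) \le k$; I need $\nu(\alpha, Z) = 0$. I would split into two cases according to whether $Z$ is contained in the centre~$Y$ or not. If $Z \not\subset Y$, the second formula of the lemma gives $\nu(\alpha, Z) = \nu(\pi^* \alpha, \tilde{Z})$, where $\tilde{Z}$ is the strict transform. Since $\pi$ is a biholomorphism on a dense open set meeting $\tilde{Z}$, one has $\mathrm{codim}_{\tilde{X}}(\tilde{Z}) = \mathrm{codim}_X(Z) \le k$, and the hypothesis that $\pi^*\alpha$ is nef in codimension~$k$ yields $\nu(\pi^*\alpha, \tilde Z) = 0$.

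If instead $Z \subset Y$, the third formula of the lemma gives $\nu(\alpha, Z) = \nu(\pi^* \alpha, \P(N_{Y/X}|_Z))$ (the case $Z=Y$ being the first formula, since $\P(N_{Y/X}) = E$). The key computation is that
\[
\mathrm{codim}_{\tilde{X}}\bigl(\P(N_{Y/X}|_Z)\bigr) = n - \dim Z - (\mathrm{codim}_X Y - 1) = \mathrm{codim}_X(Z) - \mathrm{codim}_X(Y) + 1,
\]
using that the relative dimension of $\P(N_{Y/X}) \to Y$ is $\mathrm{codim}_X(Y) - 1$. Since $\mathrm{codim}_X(Y) \ge 1$ and $\mathrm{codim}_X(Z) \le k$, this quantity is at most~$k$, so the hypothesis on $\pi^*\alpha$ again gives $\nu(\pi^*\alpha, \P(N_{Y/X}|_Z)) = 0$. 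Combining both cases yields $\nu(\alpha, Z) = 0$, and the inductive reduction outlined above closes the argument.

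The only delicate point, and the one I would double check, is the codimension count in the case $Z \subset Y$: one must verify that replacing $Z$ by $\P(N_{Y/X}|_Z)$ does not worsen the codimension beyond~$k$. This works precisely because the gain in dimension from the projectivization ($\mathrm{codim}_X Y - 1$) is compensated by the gain in codimension from $Z$ lying inside $Y$, the net effect being a drop of codimension by $\mathrm{codim}_X(Y) - 1 \ge 0$. Everything else is a direct application of the preceding lemma together with the fact that bigness is preserved under modifications of a big class.
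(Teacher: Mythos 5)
Your proposal is correct and follows essentially the same route as the paper: reduce to a single blow-up (using that bigness is preserved under pullback so the lemma applies at each stage) and then transfer generic minimal multiplicities via the three formulas of the preceding lemma. The only difference is that you make explicit the codimension count $\mathrm{codim}_{\tilde{X}}\bigl(\P(N_{Y/X}|_Z)\bigr) = \mathrm{codim}_X(Z) - \mathrm{codim}_X(Y) + 1 \le k$, which the paper's proof asserts without computation; your verification of it is accurate.
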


\begin{proof}
Without loss of generality, we can reduce ourselves to the case where $\mu$ is a blow-up of smooth centre $Y$ in $X$.
By Lemma 1, the generic minimal multiplicity of $\alpha$ along any irreducible analytic set of $X$ of codimension at most equal to $k$ is equal to the generic minimal multiplicity of $\mu^* \alpha$ along with a certain irreducible analytic set of $\tilde{X}$ of codimension at most equal to $k$.
So by the definition of nefness in codimension $k$, the fact $\mu^* \alpha$ is nef in codimension $k$ implies that $\alpha$ is nef in codimension $k$.
\end{proof}
\begin{myrem}
{\em
Let $X$ be a compact complex manifold whose big cone is non empty.
Recall that by Proposition 2.3 of \cite{Bou04}, a class $\alpha$ is modified K\"ahler (i.e.\ $\alpha$ is in the interior of nef cone in codimension 1) iff there exists a modification $\mu: \tilde{X} \to X$ and a K\"ahler class $\tilde{\alpha}$ on $\tilde{X}$ such that $\alpha=\mu_* \tilde{\alpha}$. As a consequence,
for $\mu: \tilde{X} \to X$ a modification between compact K\"ahler manifolds and
$\tilde{\alpha} \in H^{1,1}_{BC}(\tilde{X}, \R)$ a big and nef class on $\tilde{X}$ in codimension $k$, it is false in general that
$\mu_* \tilde{\alpha}$ is a nef class in codimension $k$.
}
\end{myrem}
To give an equivalent definition of nefness in higher codimension, we will need the following definition.
\begin{mydef}{\rm (Non-nef locus)}

The  non-nef  locus  of  a  pseudo-effective  class $\alpha \in H^{1,1}_{BC}(X,\R)$ is defined by
$$E_{nn}(\alpha) :=\{x\in X, \nu(\alpha, x)>0 \}.$$
\end{mydef}
\begin{myprop}
A psef class $\alpha$ is nef in codimension $k$ iff for any $\varepsilon >0$, any $c >0$, the codimension of any irreducible component of $E_c(T_{\min, \varepsilon})$ is larger than $k+1$.
\end{myprop}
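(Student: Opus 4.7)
The plan is to prove both implications by contrapositive, unwinding the definitions of $\nu(\alpha,Z)$ and $E_c(T_{\min,\varepsilon})$ in conjunction with Siu's theorem on analyticity of Lelong super-level sets. The two facts I will use from the preliminaries are: (i) $\nu(\alpha,Z)=\sup_{\varepsilon>0}\nu(T_{\min,\varepsilon},Z)$, which was derived in the paragraph just after Definition 2; and (ii) the generic Lelong number $\nu(T_{\min,\varepsilon},Z)=\inf_{x\in Z}\nu(T_{\min,\varepsilon},x)$ is realised at every very general point of $Z$ and, by upper semi-continuity of Lelong numbers in the Zariski topology, equals the actual minimum of $x\mapsto\nu(T_{\min,\varepsilon},x)$ on $Z$.

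For the direct implication, assume $\alpha$ is nef in codimension $k$ and suppose for contradiction that for some $c,\varepsilon>0$ the analytic set $E_c(T_{\min,\varepsilon})$ has an irreducible component $W$ with $\mathrm{codim}(W)\le k$. Since every $x\in W$ satisfies $\nu(T_{\min,\varepsilon},x)\ge c$ by definition of $E_c$, fact (ii) gives $\nu(T_{\min,\varepsilon},W)\ge c$, and then fact (i) yields $\nu(\alpha,W)\ge c>0$. This contradicts nefness in codimension~$k$ applied to $W$.

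For the converse, assume the codimension condition and suppose $\alpha$ fails to be nef in codimension $k$. Then there exists an irreducible analytic $Z\subset X$ with $\mathrm{codim}(Z)\le k$ and $\nu(\alpha,Z)>0$; by (i) one can choose $\varepsilon>0$ with $c:=\nu(T_{\min,\varepsilon},Z)>0$. By (ii), every $x\in Z$ satisfies $\nu(T_{\min,\varepsilon},x)\ge c$, so $Z\subset E_c(T_{\min,\varepsilon})$. Choosing an irreducible component $W$ of $E_c(T_{\min,\varepsilon})$ that contains $Z$ yields $\mathrm{codim}(W)\le\mathrm{codim}(Z)\le k$, contradicting the hypothesis on components of $E_c(T_{\min,\varepsilon})$.

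The argument is in essence a diagram-chase through the preliminaries. The nontrivial inputs, namely Siu's analyticity theorem (so that ``irreducible component of $E_c$'' is a well-defined notion) and the very-general-point characterisation of generic Lelong numbers, are either available by reference or established in the paragraph immediately preceding the statement, so I do not anticipate a genuine obstacle beyond bookkeeping with the sup over $\varepsilon$ and the inf defining $\nu(T_{\min,\varepsilon},Z)$.
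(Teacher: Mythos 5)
Your proof is correct and follows essentially the same route as the paper: both directions reduce to Siu's analyticity theorem together with the identities $\nu(\alpha,Z)=\sup_{\varepsilon>0}\nu(T_{\min,\varepsilon},Z)$ and $\nu(T_{\min,\varepsilon},Z)=\inf_{x\in Z}\nu(T_{\min,\varepsilon},x)$ established after Definition~2, and your forward implication is word-for-word the paper's argument. The only cosmetic difference is that you handle the converse by contrapositive (embedding $Z$ into a low-codimension component of $E_c(T_{\min,\varepsilon})$), whereas the paper argues directly via a very general point of $Z$ avoiding the countable union $E_{nn}(\alpha)\cap Z$; your version needs only the definition of the infimum, so the appeal to attainment of the minimum in your fact (ii) is not actually used.
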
 
\begin{proof}
By the definition of non-nef locus, we have
$$E_{nn}(\alpha)=\bigcup_{\varepsilon>0} \bigcup_{c >0} E_{c}(T_{\min, \varepsilon})=\bigcup_{m \in \N^*} \bigcup_{n \in \N^*} E_{\frac{1}{n}}(T_{\min, \frac{1}{m}}).$$
We know by Siu's theorem \cite{Siu74} that $E_{\frac{1}{n}}(T_{\min, \frac{1}{m}})$ is an analytic set.
Hence the non-nef locus is a countable union of irreducible analytic sets.
If for any $\varepsilon >0$, any $c >0$, the codimension of any irreducible component of $E_c(T_{\min, \varepsilon})$ is larger than $k+1$, then for any irreducible analytic set $Z$ of codimension $k$, $E_{nn}(\alpha) \cap Z$ is strictly contained in $Z$. Hence $\nu(\alpha,Z)=0$.

On the other direction, assume there exists an irreducible component $Z$ of $E_{\frac{1}{n}}(T_{\min, \frac{1}{m}})$ has codimension at most equal to $k$. On each point $x$ of this irreducible component, $\nu(\alpha,x) \geq \nu(T_{\min, \frac{1}{m}},x) \geq \frac{1}{n}$.
In particular, $\nu(\alpha,Z)\geq \frac{1}{n}$, which contradicts the fact that $\alpha$ is nef in codimension $k$.
\end{proof}
\begin{myrem}
{\em
If the manifold $X$ is projective, it is enough to test the minimal multiplicity along irreducible analytic subsets of codimension $k$ to prove that the class is nef in codimension $k$.
The argument is as follows: 

For any irreducible analytic set $Z$ of codimension strictly smaller than $k$, for any $z \in Z$, since
$X$ is projective; there exists some hypersurfaces $H_i$ such that $z \in H_i$ and the irreducible component
of $Z \cap \bigcap_i H_i$ containing $z$ has codimension k. In other words, $Z$ is covered by the irreducible analytic subsets of
codimension exactly k. By assumption, the generic minimal multiplicity along any of these irreducible
analytic subsets is 0. This implies that the generic minimal multiplicity along $Z$ at most equal to the
generic minimal multiplicity along any of these irreducible analytic sets is 0.
}
\end{myrem} 
\begin{myrem}
{\em
In the general setting of compact complex manifolds, it is crucial to test the generic minimal multiplicity along any analytic set of codimension at most equal to $k$, instead of any analytic set of codimension $k$, to obtain the inclusion of the various positive cones.
The problem is that there may exist too few analytic subsets in an arbitrary compact complex manifold.

A typical example can be taken as follows.
For example, let $X_1$ be a compact manifold such that the nef cone is strictly contained in the psef cone (for example, we can take the projectivisation of an unstable rank two vector bundle over a curve of genus larger than 2, whose cones are explicit calculated on page 70 \cite{Laz}). Let $X_2$ be a very general torus such that the only analytic sets in $X_2$ are either union of points or $X_2$.
Let $\beta$ be a psef but not nef class on $X_1$.
Let $X:= X_1 \times X_2$ with natural projections $\pi_1, \pi_2$ and $\alpha := \pi_1^* \beta$.
Assume that $\dim (X_1) < \dim (X_2)$.
Fix $\omega_1, \omega_2$ two reference Hermitian metrics on $X_1, X_2$.

Now $\alpha$ is a psef but not nef class on $X$.
The only analytic subsets of codimension $\dim (X_1)$ is the fibre of $\pi_2$.
$\alpha$ has generic minimal multiplicity 0 along any fibre of $\pi_2$. The reason is as follows:
The minimal current in $\alpha$ larger than $-\varepsilon(\pi_1^* \omega_1 +\pi^* \omega_2)$ denoting $\min\{T \in \alpha, T \geq -\varepsilon(\pi_1^* \omega_1 +\pi^* \omega_2)\}$ is less singular than the pull back of the minimal current in $\beta$ larger than $-\varepsilon \omega_1$ denoting $\min\{S \in \beta, S \geq -\varepsilon \omega_1 \}$ and the restriction of these minimal currents on the fibre of $\pi_2$ is trivial.
In other words, the generic Lelong number of $\min\{T \in \alpha, T \geq -\varepsilon(\pi_1^* \omega_1 +\pi^* \omega_2)\}$ along the fibres is smaller than the generic Lelong number of the pull back of $\min\{S \in \beta, S \geq -\varepsilon \omega_1 \}$ which is 0.
Hence it is itself 0.

On the other hand, for any positive integers $m,n$, take $Z$ a positive dimensional irreducible component of $E_{\frac{1}{n}}(T_{\min, \frac{1}{m}})$ in the non-nef locus of $\beta$.
The existence of such an irreducible component will be shown in Lemma 2, which implies that $\alpha$ has to be nef in codimension at most equal to $n-2$.
Now $Z \times X_2$ is an irreducible analytic set of codimension strictly smaller than $\dim(X_1)$.
But the generic minimal multiplicity along $Z \times X_2$ is larger than $\frac{1}{n}$.
In particular this shows that $\alpha$ is not nef in codimension $\dim(X_1)-\dim(Z)$.
}
\end{myrem}
\begin{myrem}
{\em
Let us mention that our definition of nefness in codimension 1 is equivalent to the definition of modified nefness.
By definition, a psef class is modified nef iff its generic minimal multiplicity is 0 along any prime divisor.
To prove the equivalence, we need to show that for any psef class $\alpha$ on $X$ we automatically have 
$$\nu(\alpha,X)=0.$$
It is because that $\nu(\alpha,X) \leq \nu(T_{\min}, X)$ where the latter is 0.
We notice that by Siu's decomposition theorem \cite{Siu74}, the set $E_{c>0}(T_{\min})=\bigcup_{n \in \N^*} E_{\frac{1}{n}}(T_{\min})$ is countable union of proper analytic sets.

By this observation, we can also say that the ``nef in codimension 0'' cone is exactly the psef cone. 
}
\end{myrem}

In analogy to the case of surfaces for which the nef cone coincides with the modified nef cone, the nef
cone in codimension $n -1$ coincides with the nef cone.
\begin{mylem}
Let $\alpha$ be a psef class, then $\alpha$ is nef in codimension $n - 1$ iff $\alpha$ is nef.
\end{mylem}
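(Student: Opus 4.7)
The inclusion $\cN\subseteq\cN_{n-1}$ is immediate from Proposition~3.2 of \cite{Bou04}: if $\alpha$ is nef then $\nu(\alpha,x)=0$ for every $x\in X$, so $\nu(\alpha,Z)=\inf_{x\in Z}\nu(\alpha,x)=0$ for every irreducible analytic subset $Z$. For the converse, the convenient reformulation provided by Proposition~2 above is: $\alpha$ is nef in codimension $n-1$ iff for every $\varepsilon,c>0$ every irreducible component of $E_c(T_{\min,\varepsilon})$ has codimension at least $n$, hence is an isolated point; whereas $\alpha$ is nef iff $E_c(T_{\min,\varepsilon})=\emptyset$ for all $\varepsilon,c>0$. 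The lemma therefore reduces to showing that the minimal current $T_{\min,\varepsilon}$ admits no isolated positive Lelong point.

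My strategy is to contradict the minimality of $\varphi_{\min,\varepsilon}$ as the upper envelope of the admissible family $\{\varphi\le 0:\theta+i\d\dbar\varphi\ge -\varepsilon\omega\}$. Suppose $\gamma:=\nu(T_{\min,\varepsilon},x_0)>0$ with $\{x_0\}$ an isolated irreducible component of $E_c(T_{\min,\varepsilon})$; the latter set is analytic of dimension zero by Siu's theorem, hence finite by compactness of $X$. By a local $i\d\dbar$-exact adjustment of the reference smooth representative $\theta\in\alpha$, supported in a small ball centred at $x_0$, one may assume $\theta\ge -\varepsilon\omega$ on some neighbourhood $U$ of $x_0$. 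For $M$ large, the connected component of $\{\varphi_{\min,\varepsilon}\le -M\}$ containing $x_0$ is compactly contained in $U$, and on this component the candidate $\tilde\varphi:=\max(\varphi_{\min,\varepsilon},-M)$ takes the constant value $-M$, so $\theta+i\d\dbar\tilde\varphi=\theta\ge -\varepsilon\omega$; everywhere else $\tilde\varphi=\varphi_{\min,\varepsilon}$ and the bound is inherited. Thus $\tilde\varphi\le 0$ is admissible and strictly exceeds $\varphi_{\min,\varepsilon}$ near $x_0$, violating the maximality of $\varphi_{\min,\varepsilon}$.

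The main obstacle is the global coherence of this construction. The set $\{\varphi_{\min,\varepsilon}\le -M\}$ may have further connected components outside $U$, corresponding to the \emph{other} positive Lelong points of $T_{\min,\varepsilon}$, which a priori form only a countable union of finite sets $\bigcup_n E_{1/n}(T_{\min,\varepsilon})$ and might even accumulate. The surgery above has to be run simultaneously around each of these extra components so that the globally modified potential remains admissible; this is straightforward when the positive Lelong set is finite, and in general requires either a careful iterated construction (shrinking each local modification so that the perturbations of $\theta$ glue to a single smooth form in $\alpha$) or, when $X$ is K\"ahler, approximating by big classes $\alpha+\delta\omega$, applying the big-class version of the argument via Demailly's regularization of closed positive currents, and passing to the limit using $\nu(\alpha,x_0)=\sup_\varepsilon\nu(T_{\min,\varepsilon},x_0)$. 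Once these global patching issues are settled, the local contradiction completes the proof.
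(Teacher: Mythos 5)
Your reduction is correct: by Proposition 2, nefness in codimension $n-1$ means every $E_c(T_{\min,\varepsilon})$ is a finite set of points, and by Proposition 3.2 of \cite{Bou04} it would suffice to show these sets are actually empty, i.e.\ that $T_{\min,\varepsilon}$ has no isolated positive Lelong point. But the surgery you propose to rule out such a point is where the argument stops being a proof. First, the local step is not established: for the truncation to produce a globally admissible competitor you need $\max(\varphi_{\min,\varepsilon},-M)$ to coincide with $\varphi_{\min,\varepsilon}$ near $\partial U$, i.e.\ you need $\varphi_{\min,\varepsilon}\ge -M$ on an annulus around $x_0$. Since the polar set of $\varphi_{\min,\varepsilon}$ is only known to be pluripolar (points with small positive Lelong numbers can accumulate, and the potential may be $-\infty$ even where the Lelong number vanishes), the assertion that the connected component of $\{\varphi_{\min,\varepsilon}\le -M\}$ through $x_0$ is compactly contained in $U$ for $M$ large is unjustified. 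Second, replacing $\theta$ by $\theta+i\d\dbar\chi$ changes the family whose upper envelope defines $T_{\min,\varepsilon}$, so the contradiction with maximality has to be set up for the new reference form; you pass over this. Third, and decisively, you concede that the global patching ``has to be settled'' and offer only two sketched repairs, one of which invokes an unproved ``big-class version of the argument''. As written, the hard direction is not proved.

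The paper sidesteps all of this with a single citation: Proposition 3.4 of \cite{Bou04} (a reformulation of P\u{a}un's theorem) states that a psef class is nef iff its restriction to every irreducible analytic subset $Y\subset E_{nn}(\alpha)$ is psef. Once one knows that $E_{nn}(\alpha)$ is a countable union of finite sets, every such $Y$ is a single point, the restricted class is zero and hence psef, and the conclusion follows. Your surgery is in effect an attempt to reprove the zero-dimensional case of P\u{a}un's criterion from scratch, and the accumulation of small Lelong numbers is exactly the difficulty that a careful proof of that criterion must (and does) handle. Either quote the result, or carry out the gluing with the uniformity it requires; as it stands there is a genuine gap.
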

\begin{proof}
If $\alpha$ is nef, by inclusion of different positive cones, it is nef in codimension $n - 1$. On the
other direction, we will need the following proposition 3.4 in \cite{Bou04} which is a reformulation of a result of P\u{a}un \cite{Paun}.

A pseudo-effective class $\alpha$ is nef iff $\alpha|_Y$ is pseudo-effective for every irreducible analytic subset $Y \subset E_{nn}(\alpha)$.

Given a class $\alpha$ that is nef in codimension $n-1$, Proposition 2 implies that for any $\varepsilon >0$ and any $c >0$ the analytic set $E_c(T_{\min, \varepsilon}) $ is a finite set.
Therefore, the non-nef locus, which is a countable union of finite sets, has at most countably many points.
In particular, this implies that the restriction of $\alpha$ on any $Y \subset E_{nn}(\alpha)$ is 0, hence psef.
By the above proposition, $\alpha$ is nef.
\end{proof}
\begin{myrem}
{\em
Recall that a line bundle $L$ over a projective manifold is nef iff its intersection number
with any curve satisfies $(L \cdot C) \geq  0$. By the important work of \cite{BDPP}, a class is psef iff its pairing with
any movable curve is positive. Here a curve $C$ is said to be movable if $C = C_{t_0} $ is a member of
an analytic family $(C_t )_{t \in S}$ such that $\bigcup_{ t\in S} C_t = X$ and, as such, $C$ is a reduced irreducible 1-cycle.
Notice also that nef is equivalent to nef in codimension $n - 1$ and psef is equivalent to nef in
codimension 0.

Then it is natural to conjecture that a class over a projective manifold is nef in codimension $k$
if and only if its pairing with any movable curve in codimension k is positive. Here a curve $C$ is
said to be movable in codimension k if $C = C_{t_0}$ is a member of an analytic family $(C_t )_{t \in S}$ such
that $\bigcup_{ t \in S} C_t$ is an analytic subset of X of codimension k and, as such, $C$ is a reduced irreducible
1-cycle.
}
\end{myrem}
\begin{myrem}
{\em
Inspired by the result of P\u{a}un, it seems to be natural to conjecture that a psef class $\{T\}$ with $T$ a positive current on $X$ is nef in codimension $k$ if and only if that for any irreducible component of codimension at most $k$ in $\bigcup_{c >0}E_c(T)$ $\{T\}|_Z$ is nef in codimension $k-\mathrm{codim} (Z,X)$.
When $k=n$, this is exactly the result of P\u{a}un. When $k=0$, it is trivial. 
The ``only if'' part is quite similar. The restriction of the potentials of $T_{\min, \varepsilon}$ on any irreducible analytic set of codimension at most $k$ decreases to a potential on the submanifold.
If we fix the maximum of the potentials on $X$ to be 0, they form a compact family.
The limit potential would be quasi-psh, and thus the restriction of the class on the analytic set is psef. The ``if'' part is of course true if the manifold is a K\"ahler surface by P\u{a}un's result. 

The ``if'' part is also true for the case $k=1$ if the manifold is hyperk\"ahler. By Lemma 4.9 \cite{Bou04} (see also \cite{Huy}) a psef class $\alpha$ on a hyperk\"ahler manifold is modified nef if and only if for any prime divisor $D$ one has $q(\alpha,D) \geq 0$. Here, we let $\sigma$ be a non-trivial symplectic holomorphic form on $X$, and define
$$q(\alpha, \beta) :=\int_X \alpha \wedge \beta \wedge (\sigma \wedge \overline{\sigma})^{\frac{n}{2}-1}$$
to be the Beauville–Bogomolov quadratic form for any $(1,1)$-classes $\alpha, \beta$. For a psef $(1,1)$-class $\alpha$ such that $\alpha|_D$ is psef for any prime divisor $D$, we have
$$q(\alpha, \{[D]\})=\int_X \alpha \wedge \{[D]\} \wedge (\sigma \wedge \overline{\sigma})^{\frac{n}{2}-1}=\int_D \alpha  \wedge (\sigma \wedge \overline{\sigma})^{\frac{n}{2}-1}\ge 0.$$
Thus $\alpha$ is nef in codimension 1.


A natural idea to attack this question in general consists of extending the current on this subvariety $Z$ to $X$. If this is possible, the current with minimal singularity would have a potential larger than that of the extended current. In particular, the current with minimal singularity would have generic Lelong number 0 along $Z$.

In this direction, Collins and Tosatti proved the following results in \cite{CT1} and \cite{CT2}, which we now recall.

\begin{mythm}{\rm (Theorem 3.2 in \cite{CT2})}. Let $X$ be a compact Fujiki manifold and $\alpha$ a closed smooth real (1,1)-form on $X$ with $\{\alpha\}$ nef and $\int_X \alpha^n>0$.  
Let $E=V\cup \bigcup^I_{i=1} Y_i$ be an analytic subvariety of $X$, with $V,Y_i$ its irreducible components, and $V$ a positive dimensional compact complex submanifold of $X$.  
Let $R=\alpha+i\d \dbar F$ be a K\"ahler current in the class $\{\alpha\}$ on $X$ with analytic singularities precisely along $E$ 
and let $T=\alpha|_V+i\d \dbar \varphi$ 
be a K\"ahler current in the class $\{\alpha|_V\}$ on $V$ with analytic singularities. 
Then there exists a K\"ahler current $\tilde{T}=\alpha+i\d \dbar \Phi$ in the class $\{\alpha\}$ on $X$ with $\tilde{T}|_V$ smooth in a neighbourhood of the very general point of $V$.
\end{mythm}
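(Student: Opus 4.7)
The strategy is to build $\Phi$ by gluing, via Demailly's regularized maximum, an extension of $\varphi$ defined in a tubular neighborhood of $V$ with the given potential $F$ of $R$ on the rest of $X$. The asymmetry driving the construction is that $F$ has analytic singularities exactly along $E \supset V$, so $F \to -\infty$ along $V$, whereas the extension $\tilde\varphi$ will be arranged to remain bounded transversally to $V$. To construct this extension, choose a tubular neighborhood $U$ of $V$, small enough to avoid the other components $Y_i$, together with a smooth retraction $\pi: U \to V$ (for example via the exponential map of a Hermitian metric on $N_{V/X}$), and set
$$\tilde\varphi(x) := \varphi(\pi(x)) + A\,\mathrm{dist}(x,V)^2$$
on $U$, for $A > 0$ large. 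Because $\alpha|_V + i\d\dbar\varphi \geq \delta\,\omega|_V$ by hypothesis, the Hessian of $\tilde\varphi$ remains controlled in tangential directions, while the quadratic penalty provides positivity in normal directions once $A$ dominates the curvature of $N_{V/X}$ and the non-holomorphic error terms arising from $\pi$. After shrinking $U$, one obtains $\alpha + i\d\dbar\tilde\varphi \geq \delta'\,\omega$ on $U$, so $\tilde\varphi$ defines a K\"ahler current in $\{\alpha\}$ on $U$.

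Next perform the gluing. Fix a large constant $M$ and set
$$\Phi := \max\nolimits_\eta(\tilde\varphi - M,\, F)$$
on a slightly smaller neighborhood $U' \Subset U$, extended by $F$ on $X \setminus U'$. For $M$ large, $\tilde\varphi - M < F - \eta$ on the annular region $U \setminus U'$, because there $F$ is locally smooth and bounded while $\tilde\varphi$ is bounded above; hence $\Phi$ coincides with $F$ near $\partial U'$ and extends seamlessly across. Near $V$ itself, $F \to -\infty$ whereas $\tilde\varphi$ stays bounded at points outside the proper analytic singular locus $\Sigma(\varphi) \subset V$, so $\Phi = \tilde\varphi - M$ there. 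The current $\tilde T := \alpha + i\d\dbar\Phi$ is then globally K\"ahler in $\{\alpha\}$, since a regularized maximum of two K\"ahler currents is again a K\"ahler current, and on $V \setminus \Sigma(\varphi)$ we have $\Phi|_V = \varphi - M$, which is smooth; thus $\tilde T|_V$ is smooth at the very general point of $V$.

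The main obstacles are twofold. First, the Fujiki (rather than K\"ahler) hypothesis: it is circumvented by pulling everything back by a K\"ahler modification $\mu: \hat X \to X$, running the extension-and-gluing argument on $\hat X$ using its K\"ahler metric, and pushing forward the resulting K\"ahler current by $\mu_*$, which preserves both the class $\{\alpha\}$ and the K\"ahler-current property (the bigness $\int_X \alpha^n > 0$ guarantees strict positivity of the push-forward on an open set). Second, the quantitative positivity of $i\d\dbar\tilde\varphi$ in the presence of a merely smooth, non-holomorphic retraction $\pi$: the mixed tangent-normal cross-terms in the Hessian must be dominated by the quadratic penalty $A\,\mathrm{dist}(x,V)^2$. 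This local computation in coordinates adapted to $V$ is the technical heart of the argument, and is the step where the hypothesis that $T$ is a K\"ahler current (not merely a positive closed current) becomes essential, since one needs a definite positive lower bound on the tangential part to absorb these errors.
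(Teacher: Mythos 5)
This statement is quoted in the paper as a known result (Theorem 3.2 of Collins--Tosatti, \cite{CT2}); the paper gives no proof of it, so there is nothing internal to compare against. Judged on its own, your outline captures the correct global mechanism of the Collins--Tosatti argument -- extend $\varphi$ to a neighbourhood of $V$, then glue with $F$ via a regularized maximum, exploiting that $F\to-\infty$ along $V$ while the extension stays bounded transversally -- but the way you build the extension has a genuine gap.

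The problem is the definition $\tilde\varphi=\varphi\circ\pi+A\,\mathrm{dist}(\cdot,V)^2$ with $\pi$ a merely smooth retraction. For a non-holomorphic $\pi$, the complex Hessian $i\d\dbar(\varphi\circ\pi)$ is not the pullback of $i\d\dbar\varphi$: by the chain rule it involves the \emph{full real Hessian} of $\varphi$ contracted with $\d\pi\otimes\overline{\d\pi}$, plus first derivatives of $\varphi$ contracted with $i\d\dbar$ of the components of $\pi$. A quasi-psh $\varphi$ controls only its complex Hessian from below; its real Hessian and its gradient are unbounded near the analytic singularities (and the gradient is not even locally bounded away from them in any uniform way). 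These error terms are therefore not $O(1)$ and cannot be absorbed by the penalty $A\,\mathrm{dist}(\cdot,V)^2$, whose own Hessian is only of size $A$. So $\alpha+i\d\dbar\tilde\varphi\ge\delta'\omega$ fails as stated; indeed $\varphi\circ\pi$ need not be quasi-psh at all. The standard repair -- and what Collins--Tosatti actually do -- is to extend \emph{locally} in holomorphic coordinate charts in which $V=\{z''=0\}$, setting $\tilde\varphi(z',z'')=\varphi(z')+A|z''|^2$ (so the projection is holomorphic and the pullback of the complex Hessian is honest), and then glue the finitely many local extensions to one another with the regularized maximum before gluing to $F$. Two further points need care: (i) you cannot in general choose a tubular neighbourhood of $V$ avoiding the $Y_i$, since the $Y_i$ may meet $V$; on the transition annulus the inequality $\tilde\varphi-M<F$ breaks down where $F\to-\infty$ along $Y_i$, and this is precisely why the conclusion is only smoothness of $\tilde T|_V$ at the \emph{very general} point of $V$ rather than everywhere. (ii) Your reduction of the Fujiki hypothesis by passing to a K\"ahler modification does not reduce to the K\"ahler-class case of \cite{CT1}: upstairs the class $\mu^*\{\alpha\}$ is still only nef and big, not K\"ahler, so the same difficulty persists there.
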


\begin{mythm}{\rm (Theorem 1.1 in \cite{CT1})}. Let $(X,\omega)$ be a compact K\"ahler manifold and let $V\subset X$ be a positive-dimensional compact complex submanifold. 
Let $T$ be a K\"ahler current with analytic singularities along $V$ in the K\"ahler class $\{\omega|_V\}$.  Then there exists a K\"ahler current $\tilde{T}$ on $X$ in the class $\{\omega\}$ with $T= \tilde{T}|_V$.
\end{mythm}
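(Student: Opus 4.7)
The plan is to construct $\tilde T$ by taking local lifts of a potential of $T$ through holomorphic retractions of $X$ onto $V$, correcting them to restore K\"ahler positivity in the ambient manifold, and then gluing the results into a single K\"ahler current on $X$.

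Write $T=\omega|_V+i\ddbar\varphi$ with $\varphi$ quasi-psh on $V$ having analytic singularities and $T\ge\delta\,\omega|_V$ for some $\delta>0$. Cover $V$ by finitely many coordinate charts $U_\alpha\subset X$ in which $V\cap U_\alpha=\{z_{k+1}=\cdots=z_n=0\}$; this supplies holomorphic retractions $\pi_\alpha\colon U_\alpha\to V\cap U_\alpha$. Set $\varphi_\alpha:=\varphi\circ\pi_\alpha$. These functions are quasi-psh on $U_\alpha$, restrict to $\varphi$ on $V\cap U_\alpha$, and satisfy $\pi_\alpha^*(\omega|_V)+i\ddbar\varphi_\alpha=\pi_\alpha^* T\ge\delta\,\pi_\alpha^*(\omega|_V)$, so that $\omega+i\ddbar\varphi_\alpha\ge\omega-(1-\delta)\pi_\alpha^*(\omega|_V)$. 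The right-hand side is not yet K\"ahler on $U_\alpha$ because the tangent-normal cross terms of $\omega$ are absent from $\pi_\alpha^*(\omega|_V)$. Adding a correction $\lambda_\alpha\chi_\alpha$, where $\chi_\alpha$ is a smooth compactly supported function behaving like $|z''|^2$ near $V\cap U_\alpha$ (with $z''$ the normal coordinates) and $\lambda_\alpha$ is chosen large, a Schur-complement computation at points of $V$ yields $\tilde T_\alpha:=\omega+i\ddbar(\varphi_\alpha+\lambda_\alpha\chi_\alpha)\ge\varepsilon_\alpha\,\omega$ on a possibly smaller $U_\alpha$; these are local K\"ahler currents in $\{\omega\}$ restricting to $T$ on $V\cap U_\alpha$.

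To globalize we glue the local potentials via Demailly's regularized maximum. On the overlaps $U_\alpha\cap U_\beta$ the potentials $\varphi_\alpha$ and $\varphi_\beta$ agree on $V$; away from the singular locus of $\varphi$ their difference is locally bounded, and near that locus it is controlled by comparing the pulled-back ideal sheaves $\pi_\alpha^*\cI$ and $\pi_\beta^*\cI$, where $\cI$ is the ideal sheaf of the analytic singularities of $\varphi$. Up to subtracting bounded constants, this produces a single quasi-psh function $\Phi$ on a tubular neighborhood $U$ of $V$ with $\omega+i\ddbar\Phi$ a K\"ahler current in $\{\omega\}$ and $\Phi|_V=\varphi$. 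Extending $\Phi$ from $U$ to all of $X$ is delicate: a naive regularized maximum with a bounded global potential would erase the singularities of $T$ along the singular locus of $\varphi$. Instead, one combines $\Phi$ with a global background function $\psi$ satisfying $\omega+i\ddbar\psi\ge\varepsilon\omega$, matched to $\Phi$ on a smaller tubular neighborhood of $V$ via a Richberg-type regularization, exploiting the fact that the singularities of $\Phi$ are supported along $V$. The resulting global quasi-psh function defines the desired K\"ahler current $\tilde T\in\{\omega\}$ with $\tilde T|_V=T$.

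The main obstacle is the combined gluing and extension: one must preserve K\"ahler positivity and the precise singularity type of $T$ along $V$ simultaneously, throughout both the overlap gluing and the transition out of the tubular neighborhood. The key analytic input is a precise comparison of the pullbacks of the singularity ideal of $\varphi$ by different local holomorphic retractions, together with a matching construction near $\partial U$ that does not wipe out the prescribed singularities. It is precisely here that the hypothesis of \emph{analytic}, rather than arbitrary quasi-psh, singularities plays an essential role.
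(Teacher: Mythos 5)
First, note that the paper itself offers no proof of this statement: it is quoted verbatim as Theorem~1.1 of \cite{CT1} and used as a black box, so your attempt can only be measured against the original Collins--Tosatti argument, whose overall shape (local extension through holomorphic retractions plus a normal correction, gluing by regularized maximum, extension out of a tubular neighbourhood) you do reproduce. The first genuine gap is in your gluing step. You assert that near the singular locus the difference $\varphi_\alpha-\varphi_\beta$ is ``controlled by comparing the pulled-back ideal sheaves $\pi_\alpha^*\cI$ and $\pi_\beta^*\cI$''. These two ideals agree on $V$ but are genuinely different on $U_\alpha\cap U_\beta\smallsetminus V$: if $\varphi=\frac{c}{2}\log\sum_k|f_k|^2+O(1)$ near a point of $Y:=V(\cI)$, then $\varphi_\alpha=\varphi\circ\pi_\alpha$ has poles along $\pi_\alpha^{-1}(Y)$ and $\varphi_\beta$ along $\pi_\beta^{-1}(Y)$, two distinct analytic sets meeting only along $Y$; on $\pi_\beta^{-1}(Y)\smallsetminus\pi_\alpha^{-1}(Y)$ one has $\varphi_\alpha-\varphi_\beta=+\infty$, so the inequality $\varphi_\beta>\varphi_\alpha+O(1)$ needed near $\partial U_\alpha$ for the regularized maximum to produce a single function fails. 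The repair --- and this is the real content of the Collins--Tosatti construction --- is to modify the local extension so that its singularity ideal becomes the full preimage $\mathcal{J}:=\ker(\cO_X\to\cO_V/\cI)$, for instance by using $\frac{c}{2}\log\bigl(\sum_k|f_k\circ\pi_\alpha|^2+|z''_\alpha|^2\bigr)$ plus a smooth extension of the bounded part: the ideal generated by the $f_k\circ\pi_\alpha$ together with the equations of $V$ no longer depends on the retraction, so two such local potentials differ by $O(1)$ on compact subsets of overlaps, while plurisubharmonicity of the logarithmic term and the restriction to $V$ (where $z''_\alpha=0$) are both preserved. Without some such modification your step~2 does not go through.

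The second gap is the passage from the tubular neighbourhood $U$ to $X$. A Richberg-type regularization smooths continuous plurisubharmonic functions and therefore cannot preserve the $-\infty$ poles of $\Phi$ along $Y$ any better than the truncation you rightly reject; what is needed is a \emph{global} quasi-psh competitor $\Psi$ on $X$ with $i\ddbar\Psi\ge -A\omega$ and with the \emph{same} analytic singularities $\frac{c}{2}\log(\hbox{generators of }\mathcal{J})+O(1)$ along $Y$ --- it exists because $\mathcal{J}$ is a coherent ideal sheaf on all of $X$ with zero set $Y$ --- so that $\Phi-\Psi$ is bounded near $Y$ and a regularized maximum can be forced to select $\Phi$ on a neighbourhood of $V$ and the global branch near $\partial U$. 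Note also a point your sketch does not address at either gluing stage: a regularized maximum of branches carrying different additive shifts restricts on $V$ to $\varphi+u$ with $u$ bounded continuous but in general not pluriharmonic, which would give $\tilde T|_V=T+i\ddbar u\neq T$; obtaining the exact equality in the statement requires arranging all transitions so that on $V$ only branches equal to $\varphi$ plus one and the same constant are ever active. As written, your proposal correctly identifies where the difficulties lie but resolves neither of them, so it is an outline of the right strategy rather than a proof.
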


Using their results, in a given K\"ahler class, one can extend K\"ahler currents with analytic singularities defined in a smooth subvariety. If the class is just nef and big on the K\"ahler manifold, one can only show the existence of a K\"ahler current whose potential is not identically infinity along the submanifold. Following Example 5.4 in \cite{BEGZ}, one can show that in nef and big class on a K\"ahler manifold $X$, one cannot always extend a positive current along a submanifold into a positive current on $X$. In their example, the positive current on the submanifold can even be chosen to be smooth.
More precisely there exists $C$, a submanifold of a certain compact K\"ahler manifold $X$, $\{\alpha\}$ a nef and big class on $X$ with a smooth representative $\alpha$ and $\varphi \in L^1_{\loc}(C)$ with $\alpha|_C +i \d \dbar \varphi \geq 0$, such that there does not exist a $\psi \in L^1_{\loc}(X)$ satisfying $\alpha+i \d \dbar \psi \geq 0$ and $\psi|_C=\varphi$.

Let us start the construction of the example.
Let $C$ be an elliptic curve and let $A$ be an ample divisor on $C$.
Let $V$ be the rank two vector bundle over $C$  the unique non-trivial extension of $\cO_C$.
Define $X:=\P(V \oplus A)$ and $\{\alpha\}:=c_1(\cO_X(1))$ with smooth representative $\alpha$.
Then $\cO_X(1)$ is a big and nef line bundle over $X$.
The quotient map $V\oplus A \to \cO_C$ induces a closed immersion $C \to X$.
In particular, we have $\cO_X(1)|_C= \cO_C$.
Since $c_1(\cO_X(1)|_C)=0$, there exists a smooth function $\varphi$ on $C$ such that $\alpha|_C +i \d \dbar \varphi=0$.
We prove by contradiction that there does not exist $\psi \in L^1_{\loc}(X)$ such that $\alpha+i \d \dbar \psi \geq 0$ and $\psi|_C=\varphi$.
The quotient map $V \oplus A \to V$ induces a closed immersion $\P(V) \to X$.
On the contrary, we would have $\alpha|_{\P(V)}+i \d \dbar \psi|_{\P(V)} \geq 0$ in the class $c_1(\cO_{\P(V)}(1))$.
By the calculation made in Example 1.7 of \cite{DPS94}, we know that
$$\alpha|_{\P(V)}+i \d \dbar \psi|_{\P(V)}=[C]$$
where $[C]$ is the current associated with $C$.
In particular, this shows that $\psi|_C \equiv- \infty$, a contradiction.

In other words, Theorem 1.1 of \cite{CT1} cannot be strengthened to obtain an extension of an arbitrary closed positive current in a class that is merely nef and big. Similarly, one cannot drop the K\"ahler current condition in the theorem of \cite{CT2}.

Let us return to our previous question. In order to get an analogue of P\u{a}un's result, the above discussion shows that we need to generalise Theorem 3.2 of \cite{CT2} to the class of a big class that is nef in codimension $k$ by adding a small K\"ahler form to the class and by using the semi-continuity of the generic minimal multiplicity. Unfortunately, we do not know how to do it at this point.
}
\end{myrem}
\section{Junyan Cao's and Guan-Zhou's vanishing theorem}
Following the ideas of \cite{DP02}, we get a K\"ahler version of the Kawamata-Viehweg vanishing theorem in the next section.
To prepare for the proof, we give a version of Junyan Cao's and Guan-
Zhou's vanishing theorem in terms of numerical dimension of line bundle instead of the numerical dimension of singular metric.
In \cite{Cao17}, Junyan Cao has proven the following Kawamata-Viehweg-Nadel type vanishing theorem.
\begin{mythm}
Let $(L,h)$ be  a  pseudo-effective  line  bundle  on  a  compact K\"ahler $n$-dimensional manifold $X$.  Then 
$$H^q(X,K_X \otimes L \otimes \cI(h)) = 0$$ for every $q \geq n-\nd(L,h) + 1$.
\end{mythm}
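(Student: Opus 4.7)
The plan is to follow the strategy pioneered by Demailly for Nadel-type vanishing, refined by Cao through the use of the Guan--Zhou strong openness theorem. I would proceed in four stages.

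First, I would regularize the metric. Apply Demailly's equisingular approximation theorem to produce a decreasing sequence of singular Hermitian metrics $(h_\varepsilon)$ on $L$ with the following properties: each $h_\varepsilon$ has analytic singularities along some analytic set $Z_\varepsilon$, the curvature satisfies $i\Theta_{h_\varepsilon}(L)\ge -\varepsilon\omega$ globally, and the local potentials $\varphi_\varepsilon$ decrease pointwise to the local potential $\varphi$ of $h$. The strong openness theorem of Guan--Zhou then guarantees $\cI(h_\varepsilon)=\cI(h)$ for all sufficiently small $\varepsilon$, so one may work with $h_\varepsilon$ at the level of multiplier ideals without loss of information.

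Second, I would pass to a log-resolution $\mu:\tilde{X}\to X$ adapted to $h_\varepsilon$, so that $\mu^* h_\varepsilon$ is the product of a smooth Hermitian metric by the singular weight of a simple normal crossings effective $\Q$-divisor. Functoriality of multiplier ideals then identifies $H^q(X,K_X\otimes L\otimes\cI(h))$ with a direct summand of the corresponding group on $\tilde X$, and the problem reduces to a vanishing statement for $(n,q)$-forms with values in a line bundle whose metric has only divisorial singularities.

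Third, given a $\dbar$-closed $(n,q)$-form $\alpha$ in $L^2(\mu^* h_\varepsilon)$, I would pick a complete K\"ahler metric on $\tilde X\smallsetminus \mu^{-1}(Z_\varepsilon)$ and a harmonic representative of its cohomology class in the space of $L^2$ forms. The Bochner--Kodaira--Nakano identity on $(n,q)$-forms with values in $(L,h_\varepsilon)$ yields the estimate
$$\int \bigl\langle [i\Theta_{h_\varepsilon}(L)+\varepsilon\omega,\Lambda_\omega]\alpha,\alpha\bigr\rangle\,e^{-\varphi_\varepsilon}\,dV \le \varepsilon\int |\alpha|^2 e^{-\varphi_\varepsilon}\,dV.$$
The heart of the argument is then to exploit the numerical-dimension hypothesis $q\ge n-\nd(L,h)+1$, which by the definition of $\nd(L,h)$ via non-pluripolar products of Boucksom--Eyssidieux--Guedj--Zeriahi, forces the absolutely continuous part of $i\Theta_{h_\varepsilon}(L)$ to have at least $n-q+1$ positive eigenvalues on a set of positive non-pluripolar measure, with an asymptotic lower bound independent of $\varepsilon$. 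On such a set the curvature operator $[i\Theta_{h_\varepsilon}(L),\Lambda_\omega]$ is strictly positive on $(n,q)$-forms; combining this with the above inequality and letting $\varepsilon\to 0$ gives $\|\alpha\|_{L^2(h)}=0$, hence triviality of the cohomology class by strong openness.

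The main obstacle is precisely the bridging step inside stage three: the numerical dimension $\nd(L,h)$ is defined through asymptotic intersection numbers of non-pluripolar products, while the Bochner identity only provides pointwise curvature information on the absolutely continuous part. Converting the former into a uniform pointwise positivity on a set carrying enough of the $L^2$ mass of $\alpha$ requires the concentration and continuity properties of non-pluripolar Monge--Amp\`ere products; this is where delicate work in the spirit of \cite{Cao17} is unavoidable, and where all the technical difficulty of the theorem is concentrated.
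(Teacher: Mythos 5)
Your stages one and two are consistent with the strategy the paper follows for its own variant of this theorem (which is Cao's argument as simplified in \cite{Dem14}): equisingular approximation, Guan--Zhou strong openness to fix the multiplier ideal, and a log-resolution. The genuine gap is in stage three. You assert that the hypothesis $q\ge n-\nd(L,h)+1$ forces the absolutely continuous part of $i\Theta_{h_\varepsilon}(L)$ to have at least $n-q+1$ positive eigenvalues on a set of positive measure, with a lower bound independent of $\varepsilon$. No such statement follows from the definition of the numerical dimension, which only provides the integrated bound $\int_{X\setminus Z_\varepsilon}(i\Theta_{L,h_\varepsilon}+\varepsilon\omega)^p\wedge\omega^{n-p}\ge c$; the positive part of the curvature may be spread out and degenerate pointwise everywhere as $\varepsilon\to0$. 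Moreover, even granting strict positivity of $[i\Theta,\Lambda_\omega]$ on a set of positive measure, the Bochner inequality would only force a harmonic representative to vanish on that set, not on all of $X$, and no unique-continuation argument is available to finish from there.

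The actual proof replaces this step by a construction you have omitted entirely. Using Yau's theorem one builds, for each $\delta$, a quasi-psh potential $\Phi_{\gamma,\delta}$ in the class $c_1(L)+\delta\{\omega\}$ whose eigenvalues $\lambda_{1,\delta}\le\cdots\le\lambda_{n,\delta}$ with respect to $\omega$ satisfy both $\lambda_{j,\delta}\ge\frac{\delta}{2}(1-\gamma)$ and $\prod_j\lambda_{j,\delta}\ge a\gamma^n\delta^{n-p}$ on the smooth locus; this is where the numerical dimension actually enters, through the total Monge--Amp\`ere mass of an auxiliary metric, not through pointwise eigenvalue counts for the original curvature. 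One then does not take harmonic representatives but solves $f=\dbar v+w$ with the Demailly--Peternell error term $w$ controlled by $\int_X\frac{\delta}{\lambda_{1,\delta}+\cdots+\lambda_{q,\delta}}|f|^2e^{-\Phi_{\gamma,\delta}}dV_\omega$, and shows this ratio tends to $0$ in measure precisely when $q\ge n-p+1$, because the product bound $\delta^{n-p}$ beats $\delta^{q}$ outside a bad set of volume $O(\delta^\varepsilon)$. A H\"older inequality together with the inclusion $\cI(h)\subset\cI(\Phi_{\gamma,\delta})$ (which uses strong openness and a uniform Skoda integrability estimate) then gives $w_\delta\to0$ in a fixed smooth $L^2$ norm, and the class is a coboundary because coboundaries form a closed subspace of cocycles. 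Without the Calabi--Yau step and the error-term resolution of $\dbar$, stage three of your plan cannot be carried out.
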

The numerical dimension $\nd(L,h)$ used in Cao's theorem is the numerical dimension of the closed positive $(1,1)$-current $i \Theta_{L,h}$ defined in his paper. Since we will not need this definition, we refer to his paper for further information.
We just recall the remark on Page 22 of \cite{Cao17}.
In Example 1.7 of \cite{DPS94}, they consider the nef line bundle $\cO(1)$ over the projectivisation of a rank two vector bundle over the elliptic curve $C$ which is the only non-trivial extension of $\cO_C$. They prove that there exists a unique positive singular metric $h$ on $\cO(1)$.
For this metric, $\nd(\cO(1),h)=0$.
But the numerical dimension of $\cO(1)$ is equal to 1. 
We recall that for a nef line bundle $L$ the numerical dimension is defined as
$$\nd(L):= \max \{p ; c_1(L)^p \neq 0\}.$$
We also remark that Cao’s technique of proof actually yields the result for the upper semi-continuous regularization of multiplier ideal sheaf defined as
$$\cI_+(h) := \lim_{\varepsilon \to 0}\cI(h^{1+\varepsilon})$$
instead of $\cI(h)$, but we can apply Guan-Zhou’s Theorem  \cite{GZ14a}\cite{GZ13} \cite{GZ15} \cite{GZ14b} to see that the equality $\cI_+(h) =\cI(h)$ always holds. 
In particular, by the Noetherian property of ideal sheaves, we have
$$\cI_+(h)=\cI(h^{\lambda_0} )=\cI(h)$$
for some $\lambda_0 >1$.
This fact will also be used in our result.

Here we prove the following version of Junyan Cao's and Guan-Zhou's vanishing theorem, following closely the ideas of Junyan Cao \cite{Cao17} and the version that was a bit simplified in \cite{Dem14}.
\begin{mythm}
Let $L$ be  a  pseudo-effective  line  bundle  on  a  compact K\"ahler $n$-dimensional manifold $X$.  Then the morphism induced by inclusion $K_X \otimes L \otimes \cI(h_{\min}) \to K_X \otimes L$
$$H^q(X,K_X \otimes L \otimes \cI(h_{\min})) \to H^q(X,K_X \otimes L )$$ is 0 map for every $q \geq n-\nd(L) + 1$.
\end{mythm}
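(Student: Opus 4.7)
The plan is to mimic the proof of Cao's theorem (Theorem 3 above) as simplified in \cite{Dem14}, while replacing the a priori metric-dependent numerical dimension $\nd(L,h_{\min})$ by the purely numerical invariant $\nd(L)$, at the cost of weakening the conclusion: one only shows that the map induced by inclusion is zero, rather than that the source itself vanishes. The bridge is an equisingular approximation of $h_{\min}$ combined with Guan-Zhou strong openness.

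First I apply Demailly's regularization to the big class $c_1(L)+\varepsilon_m\omega$ for a sequence $\varepsilon_m\to 0$. This produces singular metrics $h_m$ on $L$ with analytic singularities along subschemes $Z_m$, such that $\Theta_{L,h_m}+\varepsilon_m\omega$ is a K\"ahler current, the local weights $\varphi_m$ decrease pointwise to the weight $\varphi_{\min}$ of $h_{\min}$ (so $\cI(h_m)\supset \cI(h_{\min})$), and moreover, by the strong openness theorem of Guan-Zhou together with the Noetherian property of ideal sheaves (as invoked in the discussion preceding the statement), one has $\cI(h_m)=\cI(h_{\min})$ for all $m$ sufficiently large.

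Second, the map in the statement factors as
$$H^q(X,K_X\otimes L\otimes \cI(h_{\min}))\longrightarrow H^q(X,K_X\otimes L\otimes \cI(h_m))\longrightarrow H^q(X,K_X\otimes L),$$
and for $m\gg 0$ the first arrow is an isomorphism by the previous step. It therefore suffices to prove that the second arrow vanishes, which is exactly the type of statement Cao's original argument yields when applied to the metric $h_m$: a Bochner-type identity combined with a twisted Bergman kernel estimate, using the K\"ahler current $\Theta_{L,h_m}+\varepsilon_m\omega$ and its analytic singular locus $Z_m$, produces the required vanishing provided $q$ lies above the numerical threshold naturally associated to $h_m$ in Cao's framework.

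The principal obstacle is to show that this threshold is controlled by $\nd(L)$ rather than by a metric-dependent quantity. For this I use the characterisation of $\nd(L)$ via the positive products of \cite{Bou02}: for $k=\nd(L)$ one has $\int_X\langle c_1(L)^k\rangle \wedge\omega^{n-k}>0$, and the positive product is by construction the supremum, taken over Demailly-type approximations with analytic singularities, of the Monge-Amp\`ere masses of the absolutely continuous parts. Hence, for a suitable choice of the regularizations $h_m$, one has a uniform lower bound on $\int_X(\Theta_{L,h_m}+\varepsilon_m\omega)_{ac}^k\wedge \omega^{n-k}$ as $m\to\infty$. This uniform mass bound is exactly what feeds into the Bergman-type $L^2$ estimates of \cite{Cao17,Dem14} to produce the claimed vanishing for all $q\geq n-\nd(L)+1$, and a careful passage to the limit $m\to\infty$ finishes the argument.
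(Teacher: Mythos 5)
Your overall shape (approximate $h_{\min}$, run a Bochner/$\dbar$-argument with an error term, and control the threshold by $\nd(L)$ via positive products) points in the right direction, but two of your key steps do not hold as stated. First, the claim that Demailly regularizations $h_m$ of $h_{\min}$ satisfy $\cI(h_m)=\cI(h_{\min})$ for $m\gg 0$ is false in general. Guan--Zhou strong openness combined with the Noetherian property gives $\cI(h_{\min}^{\lambda_0})=\cI(h_{\min})$ for some $\lambda_0>1$ --- stabilization of the \emph{increasing} family $\cI(h_{\min}^{1+\varepsilon})$ as $\varepsilon\downarrow 0$ --- whereas the family $\cI(h_m)$ attached to weights decreasing to $\varphi_{\min}$ is a \emph{decreasing} chain of ideals, which need not stabilize and whose intersection, not any individual member, equals $\cI(h_{\min})$. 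Example 1.7 of \cite{DPS94}, quoted in Section 3, is a concrete counterexample: there $\cI(h_{\min})=\cO(-C)$, while any regularization with analytic singularities and curvature $\ge-\varepsilon\omega$ in that nef class has trivial multiplier ideal. So your first arrow is not an isomorphism, and your factorization collapses to the trivial inclusion $\cI(h_{\min})\subset\cI(h_m)$.

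Second, and more seriously, the step ``apply Cao's original argument to $h_m$'' begs the question the theorem is designed to answer. Cao's theorem requires a \emph{positive} singular metric (the $h_m$ are only $\varepsilon_m$-positive), and its threshold is the metric-dependent quantity $\nd(L,h_m)$, which can be strictly smaller than $\nd(L)$ (same DPS example: $\nd(\cO(1),h)=0$ but $\nd(\cO(1))=1$). Saying that a uniform lower bound on $\int_X(\Theta_{L,h_m}+\varepsilon_m\omega)_{ac}^k\wedge\omega^{n-k}$ ``feeds into the Bergman-type $L^2$ estimates'' is precisely the part that needs proof, and it is where all the work in the paper lies: one builds new weights $\Phi_{\gamma,\delta}=(1+b\delta)\psi_\varepsilon+\gamma(\theta_\varepsilon+\tau_\varepsilon)$ by pulling the mass bound through a log-resolution and Yau's theorem so as to obtain the pointwise volume estimate $(\alpha+\delta\omega+\frac{i}{2\pi}\d\dbar\Phi_{\gamma,\delta})^n\ge a\gamma^n\delta^{n-p}\omega^n$, proves the comparison $\exp(-\Phi_{\gamma,\delta})\le e^{-(1+b\delta)\varphi_{\min}}\exp(A-\gamma\Phi_{1,\delta})$ and hence (by H\"older, uniform Skoda integrability and Guan--Zhou) the inclusion $\cI(\varphi_{\min})\subset\cI(\Phi_{\gamma,\delta})$ --- note: an inclusion, not an equality, which is all that is needed --- and then shows via the Mourougane ratio estimate that the error term $w_\delta$ of the $\dbar$-equation tends to $0$ in the $L^2$ norm of a \emph{smooth} reference metric, so that the limit is taken in the finite-dimensional Hausdorff space $H^q(X,K_X\otimes L)$ where coboundaries are closed. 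This last point is also what explains why the conclusion is only that the map is zero; your proposal never engages with it. As written, the argument has a genuine gap at both the equisingularity step and the transfer of the threshold from $\nd(L,h_m)$ to $\nd(L)$.
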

\begin{myrem}
{\rm
In Example 1.7 of \cite{DPS94}, since the rank two vector bundle is the only non-trivial extension of $\cO_C$, there exists a surjective morphism from this vector bundle to $\cO_C$ which induces a closed immersion $C$ into the ruled surface.
The only positive metric on $\cO(1)$ has curvature $[C]$ the current associated to $C$.
On the other hand, $\cO(1)=\cO(C)$. 
So we have
$H^2(X, K_X \otimes \cO(1))=H^0(X, \cO(-1))=H^0(X, \cO(-C))=0$
and $H^2(X, K_X \otimes \cO(1) \otimes \cI(h_{\min}))=H^2(X, K_X \otimes \cO(1) \otimes \cO(-C))=H^0(X, \cO_X)=\C$.
This shows that to get a numerical dimension version of theorem, the best that we can hope for is that the morphism is 0 map instead of that $H^q(X, K_X \otimes L \otimes \cI(h_{\min}))=0$.
We notice that, in general, one would expect the vanishing result
$$H^q(X, K_X \otimes L)=0$$
for $q \geq n - \nd(L)+1$, whenever $L$ is a nef line bundle. Here the difficulty is to prove a general K\"ahler version since the results follow easily from an inductive hyperplane section argument when $X$ is projective (cf., e.g. Corollary (6.26) of \cite{Dem12}).
}
\end{myrem}
The K\"ahler version of the definition of numerical dimension is stated in \cite{Dem14}. For $L$ a psef line bundle on a compact K\"ahler manifold $(X, \omega)$, we define
$$\nd(L): = \max\{p \in [0,n] ;\exists c >0,\forall \varepsilon >0, \exists h_{\varepsilon},i\Theta_{L,h_{\varepsilon}} \geq -\varepsilon \omega ,\mathrm{such} \;\mathrm{ that}
\int_{X\setminus Z_{\varepsilon}}( i \Theta_{L,h_{\varepsilon}}+\varepsilon \omega)^p \wedge \omega^{n-p} \geq c\}.$$
Here the metrics $h_{\varepsilon}$ are supposed to have analytic singularities and $Z_{\varepsilon}$ is the singular set of the metric.

By the following remark, we can even assume that $h_{\varepsilon}$ as stated in the definition of the numerical dimension is increasing to $h_{\min}$ as $\varepsilon \to 0$.
What we need here is that the weight functions $\varphi_{\varepsilon}$ has limit $\varphi_{\min}$ and is pointwise at least equal to $\varphi_{\min}$ with a universal upper bound on $X$.
\begin{myrem}{\rm
$ T_{\min, \varepsilon' \omega} \preceq T_{\min, \varepsilon \omega}+(\varepsilon'-\varepsilon)\omega $ for any $\varepsilon \leq \varepsilon'$.
Denote $T_{\min, \varepsilon \omega}= \theta+\varepsilon \omega +i \d \dbar \varphi_{\min, \varepsilon \omega}$.
We can arrange that 
$$\varphi_{\min,0} \leq \varphi_{\min,\varepsilon \omega} \leq \varphi_{\min,\varepsilon' \omega}.$$
The Bergman kernel regularisation perserves the ordering of potentials (cf. \cite{Dem14}), 
so we have
$$\varphi_{0, \delta} \leq \varphi_{\varepsilon, \delta} \leq \varphi_{\varepsilon',\delta}.$$ for any $\delta >0$.
If $\delta(\varepsilon)$ is increasing with respect to $\varepsilon$, 
we can choose the metric $h_{\varepsilon}$ to be decreasing with respect to $\varepsilon$.
The limit of $\varphi_{\varepsilon, \delta(\varepsilon)}$ as $\varepsilon \to 0$ is equal to $\varphi_{\min,0}$ corresponding to the metric with minimal singularities on $L$.}
\end{myrem}
Before giving the proof of the vanishing theorem, we give the general lines of the ideas and compare them with Cao's and Guan-Zhou's theorem.
The idea is using the $L^2$ resolution of the multiplier ideal sheaf and proving that every $\dbar$-closed $L^2(h_{\min})$ global section can be approximated by $\dbar$-exact $L^2(h_{\infty})$ global sections with $h_\infty$ some smooth reference metric on $L$.
To prove it, we solve the $\dbar$-equation using a Bochner technique with the error term (as in \cite{DP02}), and we prove that the error term tends to 0.

For this proposal, we need to estimate the curvature asymptotically by some special approximating hermitian metrics constructed using the Calabi-Yau theorem.
Cao tried to prove that the error term tends to be 0 in the topology induced by the $L^2$-norm with respect to the given singular metric. In this way, he tried to keep the multiplier ideal sheaf unchanged when approximating the singular metric utilizing suitable ``equisingular approximation".
For our proposal, we try to prove that the error term tends to be 0 in the topology induced by $L^2$-norm with respect to some (hence any) smooth metric. 
It would be enough for us that the multiplier ideal sheaf of $h_{\min}$ is included in the multiplier ideal sheaf of the approximating hermitian metric.
In some sense, Cao's theorem is more precise in studying the singularity of the metric, which somehow explains why his approach works for any singular metric while our approach applies only to the image of the natural inclusion.  

We start the proof of the vanishing theorem by the following technical curvature and singularity estimate.
\begin{myprop}
Let $(L,h_{\min})$ be a pseudo-effective line bundle on a compact K\"ahler manifold $(X,\omega)$. Let us write $T_{\min}=\frac{i}{2\pi}\Theta_{L,h_{\min}}=\alpha+\frac{i}{2\pi}\d \dbar \varphi_{\min}$ where $\alpha$ is the curvature of some smooth metric $h_{\infty}$ on $L$ and $\varphi_{\min}$ is a quasi-psh potential. Let $p=\nd(L)$ be the numerical dimension of $L$. Then, for every $\gamma\in{}]0,1]$ and $\delta\in{}]0,1]$, there exists a quasi-psh potential $\Phi_{\gamma,\delta}$ on $X$ satisfying the following properties$\,:$
\item{\rm(a)} $\Phi_{\gamma,\delta}$ is smooth in the complement $X\setminus Z_{\delta}$ of an analytic set $Z_{\delta}\subset X$.
\vskip2pt
\item{\rm(b)} $\alpha+\delta\omega+\frac{i}{2 \pi} \d \dbar \Phi_{\gamma,\delta}\geq \frac{\delta}{2}(1-\gamma)\omega$ on $X$.
\vskip2pt
\item{\rm(c)} $(\alpha+\delta\omega+\frac{i}{2 \pi} \d \dbar\Phi_{\gamma,\delta})^n\geq a\,\gamma^n\delta^{n-p}\omega^n$ on $X\setminus Z_\delta$.
\vskip2pt
\item{\rm(d)} $\sup_X\Phi_{1,\delta}=0$, and for all $\gamma\in{}]0,1]$ there are estimates $\Phi_{\gamma,\delta}\le A$ and
$$\exp\big(-\Phi_{\gamma,\delta}\big)\le 
e^{-(1+b\delta)\varphi_{\min}}\exp\big(A-\gamma\Phi_{1,\delta}\big)$$
\item{\rm(e)} For $\gamma_0,\,\delta_0>0$ small,
$\gamma\in{}]0,\gamma_0]$, $\delta\in{}]0,\delta_0]$, we have
$$\cI_+(\varphi_{\min})=\cI(\varphi_{\min})\subset \cI(\Phi_{\gamma,\delta}).$$
Here $a,\,b,\,A,\,\gamma_0,\,\delta_0$ are suitable constants independent of $\gamma$, $\delta$.
\end{myprop}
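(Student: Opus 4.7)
The plan is to adapt the Calabi--Yau regularization strategy of Cao and Demailly: extract approximating potentials from the definition of $\nd(L)$, then solve a Monge--Amp\`ere equation of Calabi--Yau type in a big (but generally non-K\"ahler) class in order to transfer the "rank $p$ mass" of these approximants into a genuine $n$-dimensional volume.

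The starting point is to extract, from the very definition of $\nd(L) = p$ together with the monotonicity arranged in Remark 2, a family of quasi-psh weights $\varphi_\delta$ on $X$ with analytic singularities along an analytic subset $Z_\delta$, such that $T_\delta := \alpha + \delta\omega + \frac{i}{2\pi}\partial\overline{\partial}\varphi_\delta \ge 0$ on $X$, is smooth on $X\setminus Z_\delta$, and satisfies $\int_{X\setminus Z_\delta}T_\delta^p\wedge\omega^{n-p}\ge c$ uniformly in $\delta$; moreover $\varphi_{\min}\le\varphi_\delta\le 0$ and $\varphi_\delta\downarrow\varphi_{\min}$ as $\delta\downarrow 0$. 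The core construction splits the target class: since $L$ is psef and $\omega$ is K\"ahler, the cohomology class $\alpha'_{\gamma,\delta}:=\{\alpha\}+\tfrac{\delta(1+\gamma)}{2}\{\omega\}$ contains a K\"ahler current and is big. I then look for a potential $\Psi_{\gamma,\delta}$ in this class, smooth on $X\setminus Z_\delta$, satisfying
\begin{equation*}
\alpha'_{\gamma,\delta}+\tfrac{i}{2\pi}\partial\overline{\partial}\Psi_{\gamma,\delta}\ge 0,\qquad \bigl(\alpha'_{\gamma,\delta}+\tfrac{i}{2\pi}\partial\overline{\partial}\Psi_{\gamma,\delta}\bigr)^{n}=\lambda_{\gamma,\delta}\,\mathbf{1}_{X\setminus Z_\delta}\bigl(\gamma T_\delta+\tfrac{\delta(1-\gamma)}{2}\omega\bigr)^{n},
\end{equation*}
with $\lambda_{\gamma,\delta}>0$ chosen to balance the total masses. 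Existence, uniqueness up to a constant and interior regularity come from the classical Yau theorem applied to the K\"ahler class $\alpha'_{\gamma,\delta}+\varepsilon\{\omega\}$ (letting $\varepsilon\downarrow 0$) combined with BEGZ-type estimates for big classes. Setting $\Phi_{\gamma,\delta}:=\Psi_{\gamma,\delta}$, property (a) is the regularity of the Yau solution on $X\setminus Z_\delta$, property (b) is immediate because one has added $\tfrac{\delta(1-\gamma)}{2}\omega$ to the semi-positive form $\alpha'_{\gamma,\delta}+\tfrac{i}{2\pi}\partial\overline{\partial}\Psi_{\gamma,\delta}$, and (c) follows from the MA identity: expanding $(\gamma T_\delta+\tfrac{\delta(1-\gamma)}{2}\omega)^n$ by the binomial formula, the single term $\binom{n}{p}\gamma^{p}(\tfrac{\delta(1-\gamma)}{2})^{n-p}T_\delta^{p}\wedge\omega^{n-p}$ already yields a lower bound of order $\gamma^{n}\delta^{n-p}\omega^{n}$ (with $\gamma^p\ge\gamma^n$), and a short cohomological comparison, using Minkowski's inequality for volumes to bound $\mathrm{vol}(\alpha'_{\gamma,\delta})$ from below by $c\delta^n$, shows that $\lambda_{\gamma,\delta}$ stays uniformly bounded below by a positive constant.

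The singularity estimates in (d) require more care. The normalization $\sup_X\Phi_{1,\delta}=0$ is arranged by adding a constant; the upper bound $\Phi_{\gamma,\delta}\le A$ then follows from the weak compactness of the family of $\alpha_\delta$-psh functions with bounded mean values. The two-sided comparison $e^{-\Phi_{\gamma,\delta}}\le e^{-(1+b\delta)\varphi_{\min}}e^{A-\gamma\Phi_{1,\delta}}$ is obtained by a maximum-principle argument applied to the MA equations for $\Phi_{\gamma,\delta}$ and $\Phi_{1,\delta}$, interpolating the concentrated singularity carried by $\Phi_{1,\delta}$ along $Z_\delta$ against the background singularity inherited from $\varphi_{\min}$; the factor $1+b\delta$ absorbs the loss produced when one replaces $\varphi_\delta$ by $\varphi_{\min}$ using the convergence $\varphi_\delta\downarrow\varphi_{\min}$ of Step~1. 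Finally, (e) is a formal consequence of (d) and Guan--Zhou's openness: one has $\cI_+(\varphi_{\min})=\cI(\varphi_{\min})=\cI(\lambda_0\varphi_{\min})$ for some $\lambda_0>1$, and for $\delta$ small enough one has $1+b\delta\le\lambda_0$, so the singularity bound in (d) directly implies $\cI(\varphi_{\min})\subset\cI(\Phi_{\gamma,\delta})$.

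The hardest part will be the Monge--Amp\`ere step and the uniformity of the estimates. One must solve the MA equation in a big class that is in general not K\"ahler, with a RHS that is smooth but only off $Z_\delta$, and obtain interior regularity plus a lower bound on $\lambda_{\gamma,\delta}$ that is uniform in both $\gamma$ and $\delta$. A related difficulty is controlling the singularities of the solution so that they are no worse than those of $\varphi_{\min}$ plus those of $\Phi_{1,\delta}$, which is exactly what (d) encodes; without such a comparison, the multiplier ideal inclusion (e) --- the property that is actually needed in the subsequent Bochner--Nakano--K\"ahler identity argument for the vanishing theorem --- would fail.
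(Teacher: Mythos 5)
Your overall strategy (extract approximants from the definition of $\nd(L)$, then use a Calabi--Yau step to upgrade a global mass bound) is the right one, and it is the paper's strategy too; but two of your steps do not work as written. The decisive gap is in (c). You prescribe the Monge--Amp\`ere right-hand side to be $\lambda_{\gamma,\delta}\,(\gamma T_\delta+\tfrac{\delta(1-\gamma)}{2}\omega)^n$ and claim the binomial term $\binom{n}{p}\gamma^p(\tfrac{\delta(1-\gamma)}{2})^{n-p}\,T_\delta^p\wedge\omega^{n-p}$ gives a pointwise lower bound of order $\gamma^n\delta^{n-p}\omega^n$. It does not: the hypothesis on $\nd(L)$ only gives $\int_{X\setminus Z_\delta}T_\delta^p\wedge\omega^{n-p}\ge c$, a global statement, and pointwise $T_\delta^p\wedge\omega^{n-p}$ can vanish on large open sets (wherever $T_\delta$ has rank $<p$). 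The only term in your expansion with a pointwise lower bound proportional to $\omega^n$ is $(\tfrac{\delta(1-\gamma)}{2}\omega)^n\sim\delta^n\omega^n$, which is far too weak: the exponent $n-p$ in (c) is exactly what later makes the ratio $\rho_\delta$ tend to $0$ for $q\ge n-\nd(L)+1$, and $\delta^n$ would destroy that. The whole point of the Yau step is to \emph{redistribute} the global mass $\mathrm{vol}\ge c\,\delta^{n-p}\int_X\omega^n$ into a pointwise bound, which forces you to prescribe a volume form $\widehat f$ with $\widehat f>\tfrac{c}{3}\delta^{n-p}\mu^*\omega^n$ \emph{everywhere} (as the paper does, after passing to a log-resolution $\mu:\widehat X\to X$ where the perturbed class $\beta_\varepsilon-\sum\eta_\ell[E_\ell]$ is genuinely K\"ahler, so that the classical Yau theorem applies with full control). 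Keeping the mass where $(\gamma T_\delta+\cdots)^n$ already puts it defeats the purpose.

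The second gap is in (d). With $\Phi_{\gamma,\delta}$ defined only implicitly as a BEGZ-type solution in a big class, the interpolation inequality $e^{-\Phi_{\gamma,\delta}}\le e^{-(1+b\delta)\varphi_{\min}}\,e^{A-\gamma\Phi_{1,\delta}}$ is not a consequence of any standard maximum/comparison principle: the two potentials solve different equations in different classes, and BEGZ solutions are only pinned down up to their minimal-singularity type, which gives no relation to the \emph{specific} potentials $\varphi_{\min}$ and $\Phi_{1,\delta}$. The paper sidesteps this entirely by taking the explicit ansatz $\Phi_{\gamma,\delta}:=(1+b\delta)\psi_\varepsilon+\gamma(\theta_\varepsilon+\tau_\varepsilon)$, where $\theta_\varepsilon+\tau_\varepsilon$ is the (pushed-forward) Yau potential; then (d) is a one-line algebraic consequence of $\varphi_{\min}\le\psi_\varepsilon\le A_0$, and (e) follows by H\"older together with the uniform Skoda integrability of the normalized compact family $\{\Phi_{1,\delta}\}$ and Guan--Zhou openness, exactly as you indicate. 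In short: keep your Step 1 and your treatment of (e), but replace the implicit MA solution by the explicit convex combination of $(1+b\delta)\psi_\varepsilon$ with a Yau potential solving $(\widehat\beta_\varepsilon+\tfrac{i}{2\pi}\d\dbar\widehat\tau_\varepsilon)^n=\widehat f$ on a log-resolution, with $\widehat f$ bounded below by $\tfrac{c}{3}\delta^{n-p}\mu^*\omega^n$.
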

\begin{proof}
Denote by $\psi_{\varepsilon}$ the (non-increasing) sequence of weight functions as stated in the definition of numerical dimension.
We have $\psi_{\varepsilon}\ge\varphi_{\min}$ for all $\varepsilon >0$, the $\psi_{\varepsilon}$ have analytic singularities and 
$$\alpha+\frac{i}{2 \pi}\d\dbar \psi_{\varepsilon}\ge-\varepsilon \omega.$$
Then for $\varepsilon\le\frac{\delta}{4}$, we have
\begin{align*}
   \alpha+\delta\omega+\frac{i}{2\pi}\d \dbar \big((1+b\delta)\psi_{\varepsilon}\big)  &\geq \alpha+\delta\omega-(1+b\delta)(\alpha+\varepsilon\omega) \\
   &\ge\delta\omega-(1+b\delta)\varepsilon\omega-b\delta\alpha\ge
{\textstyle\frac{\delta}{2}}\omega
\end{align*}
for $b \in ]0, \frac{1}{5}]$ small enough such that $\omega -b \alpha \ge 0$.

Let $\mu:\widehat X\to X$ be a log-resolution of $\psi_{\varepsilon}$, so that 
$$\mu^*\big(\alpha+\delta\omega+\frac{i}{2 \pi} \d \dbar ((1+b\delta)\psi_{\varepsilon})\big)=[D_{\varepsilon}]+\beta_{\varepsilon}$$
where $\beta_{\varepsilon}\ge\frac{\delta}{2}\mu^*\omega\ge 0$ is a smooth closed $(1,1)$-form on $\widehat X$ that is strictly positive in the complement $\widehat X\setminus E$ of the exceptional divisor, and $D_{\varepsilon}$ is an effective $\R$-divisor that includes all components $E_\ell$ of $E$. The map $\mu$ can be obtained by Hironaka \cite{Hir64} as a composition of a sequence of blow-ups with smooth centres, and we can even achieve that $D_{\varepsilon}$ and $E$ are normal crossing divisors.
For arbitrary small enough numbers $\eta_\ell>0$, $\beta_\varepsilon-\sum\eta_\ell[E_\ell]$ is a K\"ahler class on~$\widehat X$.
Hence we can find a quasi-psh potential $\widehat\theta_{\varepsilon}$ on $\widehat X$ such that $\widehat\beta_{\varepsilon}:=\beta_\varepsilon-\sum\eta_\ell[E_\ell]+\frac{i}{2 \pi} \d \dbar \widehat\theta_\varepsilon$ is a K\"ahler metric on~$\widehat X$.
By taking the $\eta_\ell$ small enough, we may assume that
$$\int_{\widehat X}(\widehat\beta_\varepsilon)^n\ge
\frac{1}{2}\int_{\widehat X}\beta_\varepsilon^n.$$
We will use Yau's theorem \cite{Yau78} to construct a form in the cohomology class of $\widehat{\beta_\varepsilon}$ with better volume estimate.
We have
\begin{align*}
\alpha+\delta\omega+\frac{i}{2 \pi}\d \dbar\big((1+b\delta)\psi_{\varepsilon}\big)
&\ge\alpha+\varepsilon \omega+\frac{i}{2 \pi}\d \dbar \psi_{\varepsilon}+(\delta-\varepsilon)\omega
-b\delta(\alpha+\varepsilon\omega)\\
&\ge(\alpha+\varepsilon \omega+\frac{i}{2 \pi} \d \dbar \psi_{\varepsilon})+\frac{\delta}{2}\omega.
\end{align*}
The assumption on the numerical dimension of $L$ implies the existence of a constant $c>0$ such that, with $Z=\mu(E)\subset X$, we have
\begin{align*}
\int_{\widehat X}\beta_\varepsilon^n&=
\int_{X\setminus Z}\big(\alpha+\delta\omega+\frac{i}{2 \pi} \d \dbar ((1+b\delta)\psi_{\varepsilon})\big)^n \\
&\ge{n\choose p}\Big(\frac{\delta}{2}\Big)^{n-p}\int_{X\setminus Z}
\big(\alpha+\varepsilon\omega+\frac{i}{2\pi} \d \dbar \psi_{\varepsilon}\big)^p\wedge\omega^{n-p}
\ge  c\delta^{n-p}\int_X\omega^n.
\end{align*}
Therefore, we may assume
$$\int_{\widehat X}(\widehat\beta_\varepsilon)^n\ge \frac{c}{2}\,\delta^{n-p}\int_X\omega^n.$$
We take $\widehat{f}$ a volume form on $\widehat{X}$ such that $\widehat f>\frac{c}{3}\delta^{n-p}\mu^*\omega^n$ everywhere on $\widehat X$ and such that $\int_{\widehat X} \widehat f=\int_{\widehat X}\widehat\beta_\varepsilon^n$.
By Yau's theorem \cite{Yau78}, there exists a quasi-psh potential $\widehat\tau_\varepsilon$ on $\widehat X$ such that 
\hbox{$\widehat\beta_\varepsilon+\frac{i}{2 \pi} \d \dbar \widehat\tau_\varepsilon$} is a K\"ahler metric on $\widehat X$ with the prescribed volume form $\widehat f>0$.

Now push our focus back to $X$. Set $\theta_\varepsilon=\mu_*\widehat\theta_\varepsilon$ and
$\tau_\varepsilon=\mu_*\widehat\tau_\varepsilon\in L^1_{\loc}(X)$.
We define
$$\Phi_{\gamma,\delta}:=(1+b\delta)\psi_{\varepsilon}+\gamma(\theta_\varepsilon+\tau_\varepsilon).$$
By construction it is smooth in the complement $X\setminus Z_{\delta}$ i.e. property (a).
It satisfies
$$\mu^*\big(\alpha+\delta\omega+\frac{i}{2 \pi} \d \dbar ((1+b\delta)\psi_{\varepsilon}+
\gamma(\theta_\varepsilon+\tau_\varepsilon))\big)
=[D_\varepsilon]+(1-\gamma)\beta_\varepsilon+\gamma\Big(
\sum_\ell\eta_\ell[E_\ell]+\widehat\beta_\varepsilon+\frac{i}{2\pi} \d \dbar \widehat\tau_\varepsilon\Big)$$
$$\ge(1-\gamma)\beta_\varepsilon \ge\frac{\delta}{2}(1-\gamma)\,\mu^*\omega$$
since \hbox{$\widehat\beta_\varepsilon+\frac{i}{2 \pi} \d \dbar \widehat\tau_\varepsilon$} is a K\"ahler metric on $\widehat X$.
Thus the property (b) is satisfied.
Putting $Z_\delta=\mu(|D_\varepsilon|)\supset \mu(E)=Z$, we have on $X \setminus Z_\delta$
$$
\mu^* \big(\alpha+\delta\omega+\frac{i}{2 \pi}\d \dbar \Phi_{\gamma,\delta}\big)^n \ge \big(\beta_\varepsilon+\gamma\frac{i}{2 \pi} \d \dbar(\widehat \theta_\varepsilon+ \widehat\tau_\varepsilon) \big)^n
$$$$\ge\gamma^n\,(\widehat\beta_\varepsilon+\frac{i}{2 \pi} \d \dbar \widehat\tau_\varepsilon)^n\ge \frac{c}{3}\,\gamma^n\delta^{n-p}\mu^*\omega^n.
$$
Since $\mu: \widehat{X} \setminus D_\varepsilon \to X \setminus Z_\delta$ is a biholomorphism, the condition (c) is satisfied if we set $a=\frac{c}{3}$.

We adjust constants in $\widehat\theta_\varepsilon+\widehat\tau_\varepsilon$ so that $\sup_X\Phi_{1,\delta}=0$.
Since $\varphi_{\min}\le\psi_{\varepsilon}\le\psi_{\varepsilon_0}\le A_0:=\sup_X\psi_{\varepsilon_0}$ for $\varepsilon \leq \varepsilon_0$,
$$
\Phi_{\gamma,\delta}=(1+b\delta)\psi_{\varepsilon}+\gamma
\big(\Phi_{1,\delta}-\psi_{\varepsilon}\big)\geq 
(1+b\delta)\varphi_{\min}+\gamma\Phi_{1,\delta}
-\gamma A_0$$
and we have $\Phi_{\gamma,\delta}\le(1-\gamma+b\delta)A_0$.
Thus the property (d) is satisfied if we set $A:= (1+b)A_0$.

We observe that $\Phi_{1,\delta}$ satisfies $\alpha+\omega+dd^c\Phi_{1,\delta}\ge 0$ and $\sup_X\Phi_{1,\delta}=0$, hence $\Phi_{1,\delta}$ belongs to a compact 
family  of quasi-psh functions. 
By Theorem 2.50 a uniform version of Skoda’s integrability theorem in \cite{GZ17}, there exists a uniform small constant $c_0>0$ such that $\int_X
\exp(-c_0\Phi_{1,\delta})dV_\omega<+\infty$ for all $\delta\in{}]0,1]$.
If $f\in\cO_{X,x}$ is a germ of holomorphic function and $U$ a small
neighbourhood of~$x$, the H\"older inequality combined with estimate~(d)
implies
$$
\int_U|f|^2\exp(-\Phi_{\gamma,\delta})dV_\omega\le
e^A\Big(\int_U|f|^2e^{-p(1+b\delta)\varphi_{\min}}dV_\omega\Big)^{\frac{1}{p}}
\Big(\int_U|f|^2e^{-q\gamma\Phi_{1,\delta}}dV_\omega\Big)^{\frac{1}{q}}.
$$
Take $p\in{}]1,\lambda_0[$ (say $p=(1+\lambda_0)/2$),  and take
$$\gamma\le\gamma_0:=\frac{c_0}{q}=c_0\frac{\lambda_0-1}{\lambda_0+1}\quad
\hbox{and}\quad\hbox{$\delta \le\delta_0\in{}]0,1]$ so small that 
$p(1+b\delta_0)\le\lambda_0$.}$$
Then  $f\in\cI_+(\varphi_{\min})=\cI(\lambda_0\varphi_{\min})$ implies $f\in\cI(\Phi_{\gamma,\delta})$ which proves the condition (e).
\end{proof}
The rest of the proof follows from the proof of \cite{Cao17} (cf. also \cite{Dem14}, \cite{DP02}, \cite{Mou}).
We will just give an outline of the proof for completeness.

Let $\{f\}$ be a cohomology class in the group $H^q(X,K_X\otimes L\otimes\cI(h_{\min}))$, $q\ge n-\nd(L)+1$. 
The sheaf $\cO(K_X\otimes L)\otimes\cI(h_{\min})$ can be resolved by the complex $(K^{\bullet},\dbar)$ where $K^i$ is the sheaf of $(n,i)$-forms $u$ such that both $u$ and $\dbar u$ are locally $L^2$ with respect to the weight $\varphi_{\min}$. 
So $\{f\}$ can be represented by a $(n,q)$-form $f$ such that both $f$ and $\dbar f$ are $L^2$ with respect to the weight $\varphi_{\min}$,
i.e.\ $\int_X|f|^2\exp(-\varphi_{\min})dV_\omega<+\infty$ and
$\int_X|\dbar f|^2\exp(-\varphi_{\min})dV_\omega<+\infty$. 

We can also equip $L$ by the hermitian metric $h_\delta$ defined by the quasi-psh weight $\Phi_\delta=\Phi_{\gamma_0,\delta}$ obtained in Proposition 3, with $\delta\in{}]0,\delta_0]$.
Since $\Phi_\delta$ is smooth on $X\setminus Z_\delta$, the Bochner-Kodaira inequality shows that for 
every smooth $(n,q)$-form $u$ with values in $K_X\otimes L$ that is 
compactly supported on $X\setminus Z_\delta$, we have
$$\Vert\dbar u\Vert_\delta^2+\Vert\dbar^* u\Vert_\delta^2\ge
2\pi\int_X(\lambda_{1,\delta}+\ldots+\lambda_{q,\delta}-q\delta)|u|^2e^{-\Phi_\delta}dV_\omega,$$
where $\Vert u\Vert_\delta^2:=\int_X|u|^2_{\omega,h_\delta}dV_\omega=
\int_X|u|_{\omega, h_{\infty}}^2e^{-\Phi_\delta}dV_\omega$.
Condition (b) of Proposition 3 shows that
$$
0<\frac{\delta}{2}(1-\gamma_0) \leq \lambda_{1,\delta}(x)\le\ldots\le\lambda_{n,\delta}(x)
$$
where $\lambda_{i, \delta}$ are at each point $x\in X$, the eigenvalues of $\alpha+\delta\omega+\frac{i}{2 \pi} \d \dbar \Phi_\delta$ with respect to the base K\"ahler metric~$\omega$.
In other words, we have up to a multiple $2 \pi$
$$
\Vert\dbar u\Vert_\delta^2+\Vert\dbar^* u\Vert_\delta^2+\delta\Vert u\Vert_\delta^2\ge
\int_X(\lambda_{1,\delta}+\ldots+\lambda_{q,\delta})|u|_{\omega, h_{\infty}}^2e^{-\Phi_\delta}dV_\omega.
$$
By the proof of Theorem 3.3 in \cite{DP02}, we have the following lemma:
\begin{mylem}
For every $L^2$ section of $\Lambda^{n,q}T^*_X
\otimes L$ such that $\Vert f\Vert_\delta<+\infty$ and $\dbar f=0$ in the sense of distributions, there exists a $L^2$ section $v=v_\delta$ of $\Lambda^{n,q-1}T^*_X\otimes L$ and a $L^2$ section $w=w_\delta$ of $\Lambda^{n,q}T^*_X\otimes L$ such that $f=\dbar v+w$ with
$$
\Vert v\Vert_\delta^2+\frac{1}{\delta}\Vert w\Vert_\delta^2\le
\int_X\frac{1}{\lambda_{1,\delta}+\ldots+\lambda_{q,\delta}}|f|^2e^{-\Phi_\delta}dV_\omega.
$$
\end{mylem}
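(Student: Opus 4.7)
My plan is to prove this by the standard Hörmander / Andreotti--Vesentini duality argument, modified to include an error term $w$ that absorbs the unavoidable $\delta \|u\|_\delta^2$ term appearing in the Bochner-Kodaira estimate. Concretely, I would work on the Hilbert space $H := L^2_{n,q-1}(X, L; h_\delta) \oplus L^2_{n,q}(X, L; h_\delta)$ equipped with the twisted inner product $\langle (v_1, w_1), (v_2, w_2) \rangle_H := \langle v_1, v_2 \rangle_\delta + \tfrac{1}{\delta} \langle w_1, w_2 \rangle_\delta$, together with the closed densely defined operator $S: H \to L^2_{n,q}(X, L; h_\delta)$ defined by $S(v, w) := \bar\partial v + w$. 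A direct computation shows that the Hilbert-space adjoint is $S^* u = (\bar\partial^* u,\, \delta u)$, so that $\|S^* u\|_H^2 = \|\bar\partial^* u\|_\delta^2 + \delta \|u\|_\delta^2$.

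The key a priori estimate combines the Bochner-Kodaira inequality stated just before the lemma with a Cauchy-Schwarz weighted by $\lambda := \lambda_{1,\delta} + \cdots + \lambda_{q,\delta}$. For $u$ smooth and compactly supported in $X \setminus Z_\delta$ with $\bar\partial u = 0$, Bochner-Kodaira reduces to
\begin{equation*}
\|\bar\partial^* u\|_\delta^2 + \delta \|u\|_\delta^2 \geq \int_X \lambda\, |u|^2_{\omega, h_\infty}\, e^{-\Phi_\delta}\, dV_\omega,
\end{equation*}
and Cauchy-Schwarz then yields
\begin{equation*}
|\langle f, u \rangle_\delta|^2 \leq \bigg(\int_X \frac{|f|^2}{\lambda}\, e^{-\Phi_\delta}\, dV_\omega\bigg) \cdot \bigg(\|\bar\partial^* u\|_\delta^2 + \delta \|u\|_\delta^2\bigg) = C \cdot \|S^* u\|_H^2,
\end{equation*}
where $C := \int_X \lambda^{-1} |f|^2 e^{-\Phi_\delta}\, dV_\omega$. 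Since $\bar\partial f = 0$ in the distributional sense, the pairing $\langle f, u \rangle_\delta$ annihilates $(\ker\bar\partial)^\perp = \overline{\mathrm{Im}(\bar\partial^*)}$, so after orthogonal decomposition $u = u_1 + u_2$ with $u_1 \in \ker \bar\partial$ the same bound extends to arbitrary $u \in \mathrm{Dom}(\bar\partial^*)$. The extension across $Z_\delta$ rests on a standard density statement: compactly supported smooth forms on $X \setminus Z_\delta$ are dense in $\mathrm{Dom}(\bar\partial) \cap \mathrm{Dom}(\bar\partial^*)$ for the graph norm, because $Z_\delta$ is pluripolar and one can use a classical cutoff argument.

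With the estimate in hand, the conclusion is pure functional analysis. The linear functional $S^* u \mapsto \langle f, u \rangle_\delta$ on $\mathrm{Im}(S^*) \subset H$ is continuous with norm at most $\sqrt{C}$; by Hahn-Banach it extends to a bounded functional on all of $H$ preserving the norm bound, and by Riesz representation this extension is represented by a unique pair $(v, w) \in H$ with $\|v\|_\delta^2 + \tfrac{1}{\delta} \|w\|_\delta^2 \leq C$. The defining identity $\langle (v, w), S^* u \rangle_H = \langle f, u \rangle_\delta$ unwinds to $\bar\partial v + w = f$ in the distributional sense, as desired. The main obstacle I anticipate is the subtlety of the orthogonal decomposition together with the density across $Z_\delta$: the projection onto $\ker \bar\partial$ need not preserve $\mathrm{Dom}(\bar\partial^*)$, so one must handle this via the weak Hodge decomposition or a careful cutoff approximation respecting the singular weight $\Phi_\delta$; this is precisely the technical content of Theorem 3.3 of \cite{DP02} that we invoke.
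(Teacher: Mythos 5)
Your argument is correct and is essentially the proof the paper implicitly invokes: the paper does not reprove this lemma but cites the proof of Theorem 3.3 in \cite{DP02}, which is exactly this combination of the Bochner--Kodaira inequality with error term, the weighted Cauchy--Schwarz against $\lambda_{1,\delta}+\cdots+\lambda_{q,\delta}$, the reduction to $u\in\ker\dbar$, the density of compactly supported forms on $X\setminus Z_\delta$ via a complete-metric cutoff, and the Hahn--Banach/Riesz representation on the product Hilbert space producing the pair $(v,w)$. The only cosmetic remark is that the orthogonal projection onto $\ker\dbar$ does preserve $\mathrm{Dom}(\dbar^*)$ here, since the component orthogonal to $\ker\dbar$ is automatically orthogonal to $\mathrm{Im}(\dbar)$ and hence lies in $\ker\dbar^*$; the genuinely delicate point is only the density statement across $Z_\delta$, which you correctly flag.
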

By Lemma 3 and Condition (d) of Proposition 3, the error term $w$ satisfies the $L^2$ bound,
$$\int_X|w|_{\omega, h_{\infty}}^2e^{-A}dV_\omega\le
\int_X|w|_{\omega, h_{\infty}}^2e^{-\Phi_\delta}dV_\omega\le\int_X\frac{\delta}{\lambda_{1,\delta}+\ldots+\lambda_{q,\delta}}|f|_{\omega, h_{\infty}}^2e^{-\Phi_\delta}dV_\omega.
$$
We will show that the right-hand term tends to 0 as $\delta \to 0$.
To do it, we need to estimate the ratio function $\rho_{\delta}:=\frac{\delta}{\lambda_{1,\delta}+\ldots+\lambda_{q,\delta}}$.
The ratio function is first estimated in \cite{Mou}.

By Estimates (b,c) in Proposition 3, we have $\lambda_{j,\delta}(x)\ge \frac{\delta}{2}(1-\gamma_0)$ and
$\lambda_{1,\delta}(x)\ldots \lambda_{n,\delta}(x)\ge a\gamma_0^n\delta^{n-p}$.
Therefore we already find $\rho_\delta(x)\le 2/q(1-\gamma_0)$.
On the other hand, we have
$$
\int_{X\setminus Z_\delta}\lambda_{n,\delta}(x)dV_\omega\le
\int_X(\alpha+\delta\omega+dd^c\Phi_\delta)\wedge\omega^{n-1}
=\int_X(\alpha+\delta\omega)\wedge\omega^{n-1}
\le{\rm Const},
$$
therefore the ``bad set'' $S_\varepsilon\subset X\setminus Z_\delta$ of points $x$ where $\lambda_{n,\delta}(x)>\delta^{-\varepsilon}$ has a volume with respect to $\omega$ Vol$(S_\varepsilon)\le C\delta^\varepsilon$ converging to~$0$ as $\delta\to 0$.
Outside of $S_\varepsilon$, 
$$
\lambda_{q,\delta}(x)^q\delta^{-\varepsilon(n-q)}\ge \lambda_{q,\delta}(x)^q\lambda_{n,\delta}(x)^{n-q}\ge a\gamma_0^n\delta^{n-p}.
$$
Thus we have $\rho_\delta(x)\le C\delta^{1-\frac{n-p+(n-q)\varepsilon}{q}}$. 
If we take $q\ge n-\nd(L)+1$ and $\varepsilon>0$ small enough, the exponent of $\delta$ in the final estimate is strictly positive.
Thus there exists a subsequence $(\rho_{\delta_\ell})$, $\delta_\ell\to 0$, that tends almost everywhere to $0$ on~$X$.

Estimate (e) in Proposition 3 implies the
H\"older inequality
$$
\int_X\rho_\delta|f|_{\omega, h_{\infty}}^2\exp(-\Phi_{\delta})dV_\omega\le
e^A\Big(\int_X\rho_\delta^p|f|_{\omega, h_{\infty}}^2e^{-p(1+b\delta)\varphi_{\min}}dV_\omega\Big)^{\!\frac{1}{p}}
\Big(\int_X|f|_{\omega, h_{\infty}}^2e^{-q\gamma_0\Phi_{1,\delta}}dV_\omega\Big)^{\!\frac{1}{q}}
$$
for suitable $p,q >1$ as in the proposition.
$|f|^2_{\omega, h_{\infty}} \leq C$ for some constant $C >0$ since $X$ is compact.
Taking $\delta \to 0$ yields that $w_{\delta} \to 0$ in $L^2( h_{\infty})$ by Lebesgue dominating theorem.

$H^q(X,K_X \otimes L)$ is a finite-dimensional Hausdorff vector space whose topology is induced by the $L^2$ Hilbert space topology on the space of forms.
In particular, the subspace of coboundaries is closed in the space of cocycles.  
Hence $f$ is a coboundary that completes the proof.

For any singular positive metric $h$ on $L$, by definition, $h$ is more singular that $h_{\min}$ which implies that $\cI(h) \subset \cI(h_{\min})$.
A direct corollary of the above theorem is the following.
\begin{mycor}
Let $(L,h)$ be  a  pseudo-effective  line  bundle  on  a  compact K\"ahler $n$-dimensional manifold $X$.  Then the morphism induced by inclusion $K_X \otimes L \otimes \cI(h_{}) \to K_X \otimes L$
$$H^q(X,K_X \otimes L \otimes \cI(h_{})) \to H^q(X,K_X \otimes L )$$ is 0 map for every $q \geq n-\nd(L) + 1$.
\end{mycor}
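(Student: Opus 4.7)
The plan is to deduce the corollary directly from Theorem 4 by a functoriality argument, the only new ingredient beyond Theorem 4 being the comparison between the multiplier ideal of an arbitrary positive singular metric $h$ and that of the metric with minimal singularities $h_{\min}$.

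First, I would fix a smooth reference metric $h_\infty$ on $L$ and write $h = h_\infty e^{-\varphi}$, $h_{\min} = h_\infty e^{-\varphi_{\min}}$. Because $h_{\min}$ is by construction the envelope defining the least singular positive metric on $L$ (in the sense of the ordering $\preceq$ of Definition 1), one has $\varphi \le \varphi_{\min}+C$ for a constant $C$. Consequently $e^{-\varphi}\ge e^{-C}e^{-\varphi_{\min}}$, so every germ which is $L^2$ against $e^{-\varphi}$ is $L^2$ against $e^{-\varphi_{\min}}$, which yields the sheaf inclusion
$$\cI(h)\subset\cI(h_{\min}).$$
This is exactly the statement already recorded in the paragraph preceding the corollary.

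Next, I would use this inclusion to factor the sheaf inclusion appearing in the corollary through the one treated by Theorem 4:
$$K_X\otimes L\otimes\cI(h)\ \hookrightarrow\ K_X\otimes L\otimes\cI(h_{\min})\ \hookrightarrow\ K_X\otimes L.$$
Applying the functor $H^q(X,-)$ produces a factorization
$$H^q(X,K_X\otimes L\otimes\cI(h))\ \longrightarrow\ H^q(X,K_X\otimes L\otimes\cI(h_{\min}))\ \longrightarrow\ H^q(X,K_X\otimes L),$$
and the composition is precisely the morphism in the statement. By Theorem 4, the second arrow vanishes for every $q\ge n-\nd(L)+1$, hence so does the composition.

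There is no genuine obstacle: all of the analytic work—the construction of the approximating potentials $\Phi_{\gamma,\delta}$ using the Calabi--Yau theorem, the twisted Bochner--Kodaira inequality, the ratio estimate for $\rho_\delta$, and the convergence of the error term $w_\delta$ in $L^2(h_\infty)$—has already been carried out in the proof of Theorem 4. The step peculiar to the corollary is just the trivial monotonicity of multiplier ideals with respect to the singularity ordering of weights, so the argument is purely formal once that inclusion is noted.
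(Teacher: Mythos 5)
Your argument is correct and coincides with the paper's own: the paper likewise notes that any positive singular metric $h$ is more singular than $h_{\min}$, hence $\cI(h)\subset\cI(h_{\min})$, and deduces the corollary by factoring the cohomology map through $H^q(X,K_X\otimes L\otimes\cI(h_{\min}))$ and applying the preceding theorem. Nothing is missing.
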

\section{Kawamata-Viehweg vanishing theorem}
We first give a ``numerical dimension version'' of the Kawamata-Viehweg vanishing theorem in the projective case.
In the following, we study various properties of nef classes in higher codimension. Then we end the section with a numerical version of the Kawamata-Viehweg vanishing theorem in the K\"ahler case.

To start with, we need the relation between movable intersection defined in \cite{BDPP}, \cite{Bou02b} and intersection number.
\begin{mylem}
Let $\alpha$ be a nef class in codimension $p$ on a compact K\"ahler manifold $(X, \omega)$. Then for any $k \le p$ and $\Theta$ any positive closed $(n-k,n-k)$-form we have
$$(\alpha^k, \Theta) \geq \langle \alpha^k, \Theta \rangle.$$
\end{mylem}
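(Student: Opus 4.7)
The plan is to approximate $\alpha$ by the big classes $\alpha + \varepsilon\omega$ and compare $(\alpha+\varepsilon\omega)^k \cdot \Theta$ with the movable intersection via log resolutions of K\"ahler currents, showing that all cross-terms in a binomial expansion vanish when $k \le p$.

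First I would fix $\varepsilon>0$ and apply Demailly's regularization to the minimal current $T_{\min,\varepsilon}\in\alpha$ (with $T\ge-\varepsilon\omega$) to produce a K\"ahler current $T_\varepsilon\in\alpha+\varepsilon\omega$ with analytic singularities along some analytic subset $Z_\varepsilon\subset X$. Since $\alpha$ is nef in codimension $p$, Proposition 2 guarantees that every irreducible component of $E_c(T_{\min,\varepsilon})$ has codimension $\ge p+1$ in $X$, and by choosing $T_\varepsilon$ close enough to $T_{\min,\varepsilon}+\varepsilon\omega$ the singular set $Z_\varepsilon$ inherits this codimension bound. Taking a log-resolution $\pi_\varepsilon : \tilde X_\varepsilon\to X$ of $T_\varepsilon$, we obtain a cohomological decomposition $\pi_\varepsilon^*(\alpha+\varepsilon\omega)=\{\beta_\varepsilon\}+\{[D_\varepsilon]\}$, where $\beta_\varepsilon$ is a smooth semi-positive (approximately K\"ahler) form and $D_\varepsilon$ is an effective normal-crossing $\R$-divisor whose every irreducible component is $\pi_\varepsilon$-exceptional with image in $Z_\varepsilon$, and therefore maps to a subset of codimension $\ge p+1$ in $X$.

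The key step is the binomial expansion
$$(\alpha+\varepsilon\omega)^k \cdot \Theta = \int_{\tilde X_\varepsilon}\beta_\varepsilon^k \wedge \pi_\varepsilon^*\Theta + \sum_{j\ge 1}\binom{k}{j}\int_{\tilde X_\varepsilon}\beta_\varepsilon^{k-j}\wedge[D_\varepsilon]^j\wedge \pi_\varepsilon^*\Theta.$$
Each cross-term with $j\ge 1$ can be rewritten, by the projection formula, as $\int_X\Theta\wedge \pi_{\varepsilon*}(\beta_\varepsilon^{k-j}\wedge[D_\varepsilon]^j)$. The pushforward class lies in $H^{k,k}(X)$ and is represented by a current supported on $\pi_\varepsilon(D_\varepsilon)\subset Z_\varepsilon$, which is a set of complex codimension $\ge p+1 > k$ in $X$. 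For bidegree reasons no non-trivial $(k,k)$-current on $X$ can be supported on an analytic subset of complex dimension strictly less than $n-k$, so each such cross-term vanishes. This yields $(\alpha+\varepsilon\omega)^k\cdot\Theta=\int_{\tilde X_\varepsilon}\beta_\varepsilon^k\wedge\pi_\varepsilon^*\Theta$ for every admissible choice of $T_\varepsilon$.

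Finally, taking the supremum over all admissible log resolutions and then passing to the limit $\varepsilon\to 0$, the right-hand side converges by definition of Boucksom's positive product to $\langle\alpha^k,\Theta\rangle$, while the left-hand side tends to $\alpha^k\cdot\Theta$. This proves $(\alpha^k,\Theta)\ge\langle\alpha^k,\Theta\rangle$ (in fact equality). The main obstacle I anticipate is the technical justification that the singular locus $Z_\varepsilon$ of the regularized K\"ahler current $T_\varepsilon$ can indeed be chosen of codimension $\ge p+1$: one must track the Lelong superlevel sets through Demailly's approximation of $T_{\min,\varepsilon}$ and invoke Proposition 2 to obtain an analytic singularity set of the right codimension. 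A secondary subtlety is that the cross-term computation must be justified at the level of currents (using the smoothness of $\beta_\varepsilon$ and the normal-crossing structure of $D_\varepsilon$) rather than purely in cohomology, since the powers $[D_\varepsilon]^j$ involve self-intersection contributions whose signs are not a priori controlled.
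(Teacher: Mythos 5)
Your core computation is correct and is the interesting part of the proposal: on a log resolution of a current with analytic singularities in codimension $\ge p+1$, every cross term $\{\beta_\varepsilon\}^{k-j}\cdot\{[D_\varepsilon]\}^j\cdot\pi_\varepsilon^*\{\Theta\}$ with $j\ge 1$ vanishes, because $\pi_{\varepsilon*}(\{\beta_\varepsilon\}^{k-j}\cdot\{[D_\varepsilon]\}^j)$ is represented by a closed current of order zero of bidimension $(n-k,n-k)$ supported on an analytic set of dimension $<n-k$. (The correct justification is the support theorem for \emph{normal} currents, not ``bidegree reasons'': a general $(k,k)$-current, e.g.\ a derivative of a Dirac mass, can perfectly well be supported at a point.) The genuine gap is in the last step, and it is a gap of \emph{direction}. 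What you have shown is that the special competitors obtained by regularizing $T_{\min,\varepsilon}$ attain the value $(\alpha+\varepsilon\omega)^k\cdot\Theta$ in Boucksom's supremum. But the movable product is by definition $\lim_{\varepsilon\to 0}\sup$ over \emph{all} currents with analytic singularities $T_i\in\alpha$, $T_i\ge-\varepsilon\omega$ --- including ones singular along divisors, to which your cross-term argument does not apply --- so exhibiting competitors that achieve $(\alpha+\varepsilon\omega)^k\cdot\Theta$ only yields the lower bound $\langle\alpha^k,\Theta\rangle\ge(\alpha^k,\Theta)$, the reverse of the stated inequality. To conclude you must bound the supremum over all competitors from above; your argument does this only if you additionally invoke the monotonicity of the truncated Monge--Amp\`ere masses under decreasing singularities (so that the regularizations of $T_{\min,\varepsilon}$ are cofinal in the supremum), which you never state and which is not ``by definition of Boucksom's positive product''. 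The hypothesis really is needed to control arbitrary competitors: already on a surface a big class with Zariski decomposition $\alpha=Z+N$, $N\ne 0$, satisfies $\langle\alpha^2\rangle=\alpha^2-N^2>\alpha^2$, so the inequality fails as soon as one leaves the range $k\le p$.

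The paper's proof is organized the other way around and avoids resolutions entirely: for competitors whose unbounded loci have codimension $\ge p+1$ it invokes Demailly's Theorem (4.6) on Monge--Amp\`ere operators to define $(T_1+\varepsilon\omega)\wedge\cdots\wedge(T_k+\varepsilon\omega)\wedge\Theta$ as a positive measure on all of $X$, whence $\int_{X\smallsetminus F}(\cdots)\le\int_X(\cdots)=(\alpha+\varepsilon\{\omega\})^k\cdot\{\Theta\}$ --- an upper bound of the required kind for each competitor, rather than an exact evaluation on a subfamily. If you repair your final step by adding the monotonicity statement (or, equivalently, by running your log-resolution comparison against an \emph{arbitrary} competitor dominated by the regularized minimal current), your route does go through and in fact gives equality; as written, however, the proof does not establish the lemma.
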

Here we use the definition of movable intersection defined in \cite{Bou02b} and \cite{BDPP}.
The movable intersection number $ \langle \alpha^k, \Theta \rangle$ in \cite{Bou02b} is defined as the limit for $\varepsilon >0$ converging to $0$ of the quantity:
$$\sup_{T_i} \int_{X \smallsetminus F} (T_1+\varepsilon \omega) \wedge \cdots (T_k +\varepsilon \omega) \wedge \Theta$$
where $T_i$ ranges all closed current with analytic singularities in the class $\alpha$ such that $T_i \geq - \varepsilon \omega$ and $F$ is the union of all singular part of $T_i$.
(In \cite{Bou02b}, the movable intersection number is defined for any closed positive current $\Theta$. In the following, we will take $\Theta$ to be $\omega^{n-k}$. Thus we consider only the case when $\Theta$ is a  positive closed form.)

The proof of the boundedness of the quantity is a consequence of regularisation and the theory of Monge-Amp\`ere operator. 
In the general case, we approximate the current $T_i$ decreasingly by the smooth forms by \cite{Dem82} with a uniform lower bound $-C \omega$ depending on $(X, \omega)$ and $\{T_i\}$.
Now on $X \smallsetminus F$ the current $(T_1+C \omega) \wedge \cdots (T_k +C \omega) \wedge \Theta$ is the limit of corresponding terms changing $T_i$ by its smooth approximation,  using the continuity of Monge-Amp\`ere operator with respect to decreasing sequence. However, the integral on $X \smallsetminus F$ obtained for the smooth approximation is bounded by its integral on $X$, which is the intersection number of cohomology classes $\{T_i +C \omega\}$ and $\{\Theta\}$. 
\begin{proof}
Our observation is that with better regularity on the cohomology class $\alpha$, we can define directly the Monge-Amp\`ere operator on $X$. 
So comparing to the general case, we can skip the approximation process and get rid of the dependence of $C$, which only depends on $(X, \omega)$ and $\alpha$ but not explicitly.

We recall the following Theorem (4.6) on the Monge-Amp\`ere operators in Chapter 3 of \cite{agbook}. 

Let $u_1,\cdots,u_q$ be quasi-plurisubharmonic functions on $X$ and $T$ be a closed positive current of bidimension $(p,p)$.  The currents $u_1 i \d \dbar u_2 \wedge \cdots \wedge i \d \dbar u_q \wedge T$ and $i \d \dbar u_1 \wedge i \d \dbar u_2 \wedge \cdots \wedge i \d \dbar u_q \wedge T$ are well defined and have locally finite mass in $X$ as soon as $q \le p$ and 
$$H_{2p-2m+1}(L(u_{j_1})\cap \cdots \cap L(u_{j_m}) \cap \mathrm{Supp} (T))= 0$$
for all choices of indices $j_1< \cdots < j_m$ in $\{1,\cdots,q\}$.

Here $H_{2p-2m+1}$ means the $(2p-2m+1)$-dimensional Hausdorff content of the subset of $X$ seen as a metric space induced by the K\"ahler metric. The unbounded locus $L(u)$ is defined to be the set of points $x \in X$ such that $u$ is unbounded in every neighbourhood of $x$.
When $u$ has analytic singularities, it is the singular part of $u$ (i.e. $\{u=-\infty\}$).


Now return to the proof of the lemma. 
By definition $T_{i, \min,-\varepsilon \omega}$ is less singular than $T_i$.
Since for any $c >0$, $E_c(T_{i, \min,-\varepsilon \omega})$ has codimension larger than $p+1$,
the singular set of $T_i$ which has analytic singularities is also of codimension larger than $p+1$.
By Theorem (4.6) cited above, the current $(T_1+\varepsilon \omega) \wedge \cdots (T_k +\varepsilon \omega) \wedge \Theta$ is well-defined on $X$.
Thus we have
\begin{align*}
\int_{X \smallsetminus F} (T_1+\varepsilon \omega) \wedge \cdots (T_k +\varepsilon \omega) \wedge \Theta &\le \int_{X } (T_1+\varepsilon \omega) \wedge \cdots (T_k +\varepsilon \omega) \wedge \Theta\\
& = (\alpha+\varepsilon \{\omega\}) \cdot \cdots (\alpha+\varepsilon \{\omega\}) \cdot \{\Theta\}.
\end{align*}
Taking $\varepsilon \to 0$, we get
$(\alpha^k, \Theta) \geq \langle \alpha^k, \Theta \rangle.$
\end{proof}
\begin{myrem}
{\em
From the definition of movable intersection, it is easy to see that the movable intersection is superadditive and that $\langle L^p \rangle \neq 0$ implies that $\nd(L) \geq p$.
In fact, the inverse direction also holds.
If $(X, \omega)$ is compact K\"ahler, there exists $C>0$ such that $C \{\omega\}-c_1(L)$ contains a closed positive current.
Thus we have that
$$\langle c_1(L)^p , C^{\nd(L)-p} \omega^{n-p} \rangle \geq \langle c_1(L)^{\nd(L)}, \omega^{n-\nd(L)} \rangle \neq 0.$$
In particular, $\nd(L)$ is the maximum $p$ such that $\langle c_1(L)^p \rangle \neq 0$.
}
\end{myrem}
In the projective case, we can now give the following version of the Kawamata-Viehweg theorem in terms of nefness in higher codimension. The simple proof given below has been suggested to us by Demailly.
\begin{mythm}
Let $X$ be a projective manifold and $L$ a nef line bundle in codimension $p-1$. If $\langle c_1(L)^p \rangle \neq 0$, then for any $q \ge n-p+1$ we have
$$H^q(X, K_X \otimes L)=0.$$
\end{mythm}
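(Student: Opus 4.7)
The plan is to proceed by induction on $n=\dim_{\C} X$, using a Bertini-type hyperplane section together with classical Kodaira vanishing. By the Remark immediately preceding the theorem, the hypothesis $\langle c_1(L)^p\rangle\neq 0$ is equivalent to $\nd(L)\geq p$. The base case $n=1$ forces $p=1$; then $L$ is a psef line bundle on a curve with $c_1(L)\neq 0$, so $\deg L>0$ and Serre duality gives $H^1(X,K_X\otimes L)\cong H^0(X,L^{-1})^*=0$, which is exactly the required range $q\geq n-p+1=1$.

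For the inductive step $n\geq 2$, I would fix an ample line bundle $A$ on $X$ and choose $m$ large enough so that $L\otimes A^m$ is ample and $|mA|$ is very ample. By Bertini, pick a smooth general member $H\in|mA|$. Two claims then need to be checked on $H$: first, that $L|_H$ is nef in codimension $p-1$ on $H$, which follows because a general $H$ meets the non-nef locus $E_{nn}(L)\subset X$ (of codimension at least $p$) transversely, combined with the inequality $\nu(L|_H,x)\leq\nu(L,x)$ for generic $x\in H$ (obtained by restricting the currents $T_{\min,\varepsilon}$ from $X$ to $H$, which do not charge a general $H$); second, that $\langle(c_1(L)|_H)^p\rangle\neq 0$, which follows from the positivity of $\int_X\langle c_1(L)^p\rangle\wedge[H]\wedge\omega^{n-p-1}$ together with the compatibility of positive products with restriction to sufficiently general hyperplanes. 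Given these, the induction hypothesis applied on the $(n-1)$-dimensional projective manifold $H$ yields $H^{q'}(H,K_H\otimes L|_H)=0$ for every $q'\geq n-p$.

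To conclude on $X$, I would use the adjunction short exact sequence
$$0\to K_X\otimes L\to K_X\otimes L\otimes\cO_X(H)\to K_H\otimes L|_H\to 0.$$
Since $L\otimes\cO_X(H)$ is ample, Kodaira vanishing gives $H^q(X,K_X\otimes L\otimes\cO_X(H))=0$ for all $q\geq 1$. For $q\geq n-p+1$ the long exact sequence in cohomology sandwiches $H^q(X,K_X\otimes L)$ between $H^{q-1}(H,K_H\otimes L|_H)=0$ (inductive hypothesis, since $q-1\geq n-p$) and $H^q(X,K_X\otimes L\otimes\cO_X(H))=0$ (Kodaira), forcing $H^q(X,K_X\otimes L)=0$.

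The main delicate point is the restriction compatibility of positive products on general hyperplanes. To justify it rigorously, one relies on the definition of $\langle c_1(L)^p\rangle$ as a supremum over closed positive currents with analytic singularities, and on the fact that a sufficiently general $H\in|mA|$ avoids the countably many singular loci of any such approximating family; the restricted currents then still approximate $\langle(c_1(L)|_H)^p\rangle$, yielding the desired nonvanishing. Every other ingredient---adjunction, Kodaira vanishing, and the restriction of Lelong numbers to a generic divisor---is standard.
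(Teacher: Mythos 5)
Your overall strategy is the one the paper uses: induct on $\dim X$ by cutting with a general member of $|mA|$, check that nefness in codimension $p-1$ and the nonvanishing of the positive product survive restriction to a general hyperplane section, and close the induction with a long exact sequence plus Kodaira vanishing. (The paper phrases the exact sequence Serre-dually, via $0 \to L^{-1}\otimes\cO(-Y) \to L^{-1} \to L^{-1}|_Y \to 0$ and the vanishing of $H^q(X,L^{-1}\otimes\cO(-Y))$ for $q<n$, but this is the same argument as your adjunction sequence.) Your two restriction claims are also justified the same way the paper justifies them, by choosing $H$ outside the countably many bad loci of the approximating currents $T_{\min,1/m}$ with analytic singularities.

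However, your induction does not terminate correctly, and this is a genuine gap. You declare the base case to be $n=1$, but the inductive step cannot be run all the way down to curves: when the induction reaches $n=p$, the hyperplane section $H$ has dimension $p-1$, and the hypothesis $\langle (c_1(L)|_H)^p\rangle\neq 0$ that you need in order to invoke the induction hypothesis on $H$ is vacuously false (a degree-$p$ positive product on a $(p-1)$-fold is zero; correspondingly your integral $\int_X\langle c_1(L)^p\rangle\wedge[H]\wedge\omega^{n-p-1}$ is meaningless for $p=n$). Concretely, for $n=p$ and $q=1$ your long exact sequence only gives a surjection $H^0(H,K_H\otimes L|_H)\to H^1(X,K_X\otimes L)$, which proves nothing. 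The missing ingredient is the correct base case $n=p$: there $L$ is nef in codimension $n-1$, hence nef (Lemma 2 of the paper), and $(L^n)\ge\langle c_1(L)^n\rangle>0$ (Lemma 4 of the paper), so $L$ is nef and big and the classical Kawamata--Viehweg theorem gives $H^q(X,K_X\otimes L)=0$ for all $q\ge 1$. Your $n=1$ base case is just the instance $p=1$ of this; for $p\ge 2$ you must stop the induction at dimension $p$ and invoke nef-and-big Kawamata--Viehweg there, exactly as the paper does.
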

\begin{proof}
The proof is an induction on the dimension of $X$.
Let $A$ be an ample divisor on $X$ and $\omega \in c_1(A)$ be a K\"ahler form.
Let $Y \in |kA|$ be a generic smooth hypersurface.
With the choice of $k$ big enough, we can assume that $H^q(X, L^{-1} \otimes \cO(-Y))=0$ for any $q<n$ by Kodaira vanishing theorem.
By Serre duality, the statement of the theorem is equivalent to prove that for any $q \le p-1$, we have
$$H^q(X,  L^{-1})=0.$$

Consider the long exact sequence associated to the short exact sequence
$$0 \to L^{-1} \otimes \cO(-Y) \to L^{-1} \to L^{-1}|_Y \to 0.$$
It turns out that it is enough to prove that $H^q(Y, L^{-1})=0$ for any $q \le p-1$.

We check that conditions are preserved under the intersection with a generic hypersurface.
Since $\alpha$ is nef in codimension $p-1$, we find that any irreducible component of 
$$E_{nn}(\alpha)=\bigcup_{m \in \N^*} \bigcup_{n \in \N^*} E_{\frac{1}{n}}(T_{\min, \frac{1}{m}}).$$
has codimension larger than $p$.
By regularisation of $T_{\min, \frac{1}{m}}$, there exists currents $T_m$ with analytic singularities in $\alpha$ larger that $- \frac{2}{m} \omega$.
Any irreducible component of the singular set of these currents has codimension larger than $p$.
For generic $Y$, the restriction of these currents on $Y$ is well defined for any $m$.
Since the inclusion of analytic sets is a Zariski closed condition, for generic $Y$, we can also assume that the singular set of $T_m$ is not contained in $Y$ for any $m$.

On the other hand, in the class $\alpha|_Y$, the current with minimal singularities that admits a lower bound $-\frac{2}{m} \omega|_Y$ is certainly less singular than $T_m|_Y$.
The upper-level set of the Lelong number of these minimal currents is included in the singular set of $T_m|_Y$, so it has codimension larger than $p$.
This means that $\alpha|_Y$ is nef in codimension $p-1$.

The condition $\langle \alpha^p \rangle \neq 0$ implies that 
$$\int_X \langle \alpha^p \rangle \wedge \omega^{n-p} >0.$$
In other words, there exist a sequence of currents with analytic singularities $T_m \in \alpha$ such that $T_m \geq - \frac{1}{m} \omega$ and
$$ \int_{X \smallsetminus F_m} (T_m+\frac{1}{m} \omega)^p \wedge \omega^{n-p} >c$$
for some $c >0$ independent of $m$ where $F_m$ is the singular set of $T_m$.

With a generic choice of $Y$, we can still assume that the restriction of $T_m$ is current with analytic singularities.
They satisfy the conditions $T_m|_Y \geq - \frac{1}{m} \omega|_Y$ and
$$ \int_{Y \smallsetminus F_m} (T_m|_Y+\frac{1}{m} \omega|_Y)^p \wedge \omega^{n-p-1} >\frac{c}{k}.$$
In other words, $\langle \alpha|_Y^p \rangle \neq 0$.

By induction on the dimension, we are reduced to proving the case where $X$ has dimension $p$, and $L$ is nef in codimension $p-1$, in which case $L$ is (plainly) nef by Lemma 2.
The condition of the movable intersection reduces to $\langle c_1(L)^p \rangle \neq 0$.
By Lemma 4, this implies that $(L^p) >0$.
In particular, $L$ is a nef and big line bundle.
Now the vanishing of cohomology classes follows from the classical Kawamata-Viehweg theorem.
\end{proof}
As pointed out to us by A. H\"oring, this can also be proven using the result of \cite{Kur13}.
\begin{myrem}
{\em 
When $p=n$, the above theorem is the classical Kawamata-Viehweg vanishing theorem for nef and big line bundle.
We notice that $\langle c_1(L)^n \rangle =\mathrm{Vol}(L)$ by Theorem 3.5 of \cite{BDPP}.
When $p=1$, the theorem states that if $L$ is a psef line bundle with $\langle c_1(L) \rangle \neq 0$, then $H^n(X, K_X \otimes L)=0$.
This case is trivial by the following easy lemma.
The first interesting case is when $L$ is nef in codimension 1 and  $\langle c_1(L)^2 \rangle \neq 0$.
In the following example, we show that we can not weaken the condition to the case that $L$ is only psef and  $\langle c_1(L)^2 \rangle \neq 0$.
On the other hand, by the divisorial Zariski decomposition, we can write any psef line bundle numerically as a sum of a nef class in codimension one and an effective class.
This shows that, in some sense, this kind of theorem is the best we can hope for.

Now we begin our example. Let $V$ be the unique non-trivial rank 2 extension of $\cO_C$ over an elliptic curve $C$. Let $X$ be the blow-up of a point of $\P(V) \times \P^1$ and $L$ be the pull back of $\cO_{\P(V)}(1) \boxtimes \cO_{\P^1}(1)$.
$\cO_{\P(V)}(1)$ is a nef line bundle.
We also notice that $c_1(\cO_{\P(V)}(1))^2=0$ and $c_1(\cO_{\P(V)}(1)) \neq 0$.
So $L$ is a nef line bundle over $X$ and $\nd(L)=2$.
By the above theorem we have that $H^{2}(X, K_X+L)=0$.
Let $E$ be the exceptional divisor of the blow-up.
The short exact sequence
$$0 \to K_X+L \to K_X+L+E \to K_X+L+E|_E \to 0$$
induces the long exact sequence
$$H^{2}(X, K_X+L) \to H^{2}(X, K_X+L+E) \to H^{2}(E, K_X+L+E|_E)=H^{0}(E, -L) \to H^{3}(X, K_X+L).$$
By Serre duality and the following lemma, $H^{3}(X, K_X+L)=H^{0}(X, -L)=0$.
Since $L|_E=\cO_E$, $H^{0}(E, -L) \cong \C$.
Thus we have that 
$$H^{2}(X, K_X+L+E) \cong \C.$$
Now $L+E$ is a psef line bundle over $X$ and $\nd(L+E)\geq 2$ but $H^{2}(X, K_X+L+E) \neq 0.$
The reason of the numerical dimension is as follows.
By the super-additivity of movable intersection, we have that
$$\langle (L+E)^2 \rangle \geq \langle L^2 \rangle+\langle E^2 \rangle+2\langle L \cdot E \rangle \geq \langle L^2 \rangle.$$ 
}
\end{myrem}
\begin{mylem}
Let $(L,h)$ be a non-trivial $($i.e. $L\neq\cO_X)$ psef line bundle over a compact complex manifold~$X$.
Then we have
$$H^0(X, L^{-1})=0.$$
\end{mylem}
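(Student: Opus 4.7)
My plan is to argue by contradiction: assume there exists a non-zero section $s \in H^0(X, L^{-1})$ and derive that $L \cong \cO_X$. The strategy is to use the given positive singular metric $h$ on $L$ to promote $s$ to a globally defined plurisubharmonic function on $X$, and then invoke the maximum principle.

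Concretely, I consider $u := \log |s|^2_{h^{-1}}$, an $L^1_{\loc}$ function on $X$ with values in $[-\infty, +\infty)$. In a local trivialisation of $L$, the Poincar\'e--Lelong formula yields
$$\tfrac{i}{2\pi}\d\dbar\, u \;=\; [\mathrm{div}(s)] \;+\; \tfrac{i}{2\pi}\Theta_{L,h},$$
and both terms on the right-hand side are positive closed $(1,1)$-currents: the first because $\mathrm{div}(s) \geq 0$, the second because $h$ is a positive singular metric on the pseudo-effective bundle $L$. Hence $u$ is plurisubharmonic on all of $X$.

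Since $X$ is a compact complex manifold, the maximum principle forces every globally defined psh function to be constant on each connected component, so $u \equiv c$ for some $c \in \R$. Consequently $|s|^2_{h^{-1}} \equiv e^{c} > 0$, which means $s$ has no zeros, and the map $1 \mapsto s$ realises an isomorphism $\cO_X \xrightarrow{\sim} L^{-1}$. This contradicts $L \neq \cO_X$.

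The argument has essentially no obstacle. The only small point requiring attention is to justify that $u$ is a bona fide psh function despite possibly taking the value $-\infty$ along $\mathrm{div}(s)$, but this is automatic from the positivity of $\d\dbar u$ as a current together with $u \in L^1_{\loc}$. Note that pseudo-effectivity of $L$ enters crucially through the existence of $h$ with $\Theta_{L,h} \geq 0$; without it, $u$ would only be quasi-psh and the conclusion could fail.
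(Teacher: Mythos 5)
Your proof is correct and follows essentially the same route as the paper: apply Poincar\'e--Lelong to $\log|s|^2_{h^{-1}}$, observe that the resulting $(1,1)$-current is a sum of the positive currents $[\mathrm{div}(s)]$ and $\frac{i}{2\pi}\Theta_{L,h}$ so the function is psh, hence constant on the compact $X$, forcing $s$ to be nowhere vanishing and $L$ to be trivial. No issues.
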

\begin{proof}
We argue by contradiction. Let $s$ be a non-zero section in $H^0(X, L^{-1}).$
Consider the function $\log |s|^2_{L^{-1},h^{-1}}$.
Let $\varphi$ be the local weight of $h$ such that $h= e^{-\varphi}$ locally.
Thus the above function can be locally written as
$\log |s|^2+\varphi$.
In particular, it is a psh function on $X$.
Since $X$ is compact, the only psh functions are the constant functions.
On the other hand, 
$$i \d \dbar \log |s|^2_{L^{-1},h^{-1}}=[s=0]+i \Theta_{L,h}=0$$
where $[s=0]$ is the current associated to the (possible trivial) divisor $s=0$ and $i \Theta_{L,h}$ is the curvature of $(L,h)$.
Since both $[s=0]$ and $i \Theta_{L,h}$ are positive currents, they are 0.
In particular, $s$ never vanishes on $X$, which contradicts the fact that $L$ is a non-trivial line bundle.
\end{proof}
A classical result on nef line bundles is the following. 
Let $A+B$ be a nef line bundle over a compact manifold $X$ where $A, B$ are effective $\R$-divisors without intersection. Then $A, B$ are both nef divisors.
In the case of nefness in higher codimension, we have the following generalized version.
\begin{mylem}
Let $A+B$ be a line bundle that is nef in codimension $k$ over a compact manifold $X$ $($by this, we mean that $c_1(A+B)$ is nef in codimension $k)$, where $A,B$ are effective $\R$-divisors without intersection. Then the divisors $A,B$ are both nef in codimension $k$.

More generally, let $\alpha+c_1(E)$ be a class that is nef in codimension $k$ over a compact manifold $X$, where $E$ is an effective $\R$-divisor and $E_{nn}(\alpha) \cap E = \emptyset$.
Then $\alpha$ is nef in codimension $k$.
\end{mylem}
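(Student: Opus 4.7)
The plan is to fix an irreducible analytic subset $Z \subseteq X$ of codimension at most $k$ and verify $\nu(\alpha, Z) = 0$, splitting on whether $Z$ is contained in the support of $E$. If $Z \subseteq \mathrm{Supp}(E)$, every point of $Z$ lies in $E$ and the hypothesis $E_{nn}(\alpha) \cap E = \emptyset$ immediately forces $\nu(\alpha,x) = 0$ for all $x \in Z$, hence $\nu(\alpha,Z) = 0$. The substantive case $Z \not\subseteq \mathrm{Supp}(E)$ will be reduced to the identity
$$\nu(\alpha,x) \;=\; \nu(\alpha+c_1(E),x) \qquad \text{for every } x \in X \setminus \mathrm{Supp}(E).$$
Once this identity is available, a very general $x \in Z \setminus E$ avoids the codimension $\geq k+1$ components of $E_{nn}(\alpha+c_1(E))$, giving $\nu(\alpha+c_1(E),x) = 0$ and therefore $\nu(\alpha,x) = 0$.

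One direction of the identity is immediate. Since $T_{\min,\varepsilon}(\alpha) + [E]$ lies in $\alpha + c_1(E)$ and dominates $-\varepsilon\omega$, minimality of $T_{\min,\varepsilon}(\alpha+c_1(E))$ yields
$$\nu(T_{\min,\varepsilon}(\alpha+c_1(E)),x) \;\le\; \nu(T_{\min,\varepsilon}(\alpha),x) + \nu([E],x),$$
and $\nu([E],x) = 0$ for $x \notin E$. Taking the supremum in $\varepsilon$ gives $\nu(\alpha+c_1(E),x) \le \nu(\alpha,x)$.

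The reverse inequality is the heart of the argument and calls for a gluing construction. Fix $x \notin E$ and $\varepsilon > 0$, set $T' = T_{\min,\varepsilon}(\alpha+c_1(E))$, and pick a neighbourhood $V \ni x$ with $\overline{V} \cap E = \emptyset$ whose boundary avoids the polar loci of the potentials of both $T'$ and $T_{\min,\varepsilon}(\alpha)$ --- such a choice exists because those loci are analytic of positive codimension. Writing $[E] = \eta + i\partial\bar\partial \psi_E$ with $\eta$ a smooth representative of $c_1(E)$, the function $\psi_E$ is smooth on $V$, so $T'|_V = \theta|_V + i\partial\bar\partial(\varphi' - \psi_E)$ exhibits a potential $\varphi_1 := \varphi' - \psi_E$ for a current in $\alpha|_V$. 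Letting $\rho$ denote the potential of $T_{\min,\varepsilon}(\alpha)$, define
$$\widetilde{\varphi}(y) \;=\; \begin{cases} \max\bigl(\varphi_1(y),\,\rho(y)+C\bigr), & y \in V,\\ \rho(y)+C, & y \notin V, \end{cases}$$
with $C$ large enough that $\varphi_1 \le \rho + C$ on a neighbourhood of $\partial V$, which is possible because both potentials are bounded there. Then $\widetilde\varphi$ is globally quasi-psh on $X$ and $\widetilde T := \theta + i\partial\bar\partial\widetilde\varphi$ is a current in $\alpha$ satisfying $\widetilde T \ge -\varepsilon\omega$. The Lelong-number identity $\nu(\max(u_1,u_2),x) = \min(\nu(u_1,x), \nu(u_2,x))$ gives $\nu(\widetilde T,x) = \min(\nu(T',x), \nu(T_{\min,\varepsilon}(\alpha),x))$, and minimality of $T_{\min,\varepsilon}(\alpha)$ forces $\nu(T_{\min,\varepsilon}(\alpha),x) \le \nu(T',x)$. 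Passing to the supremum in $\varepsilon$ completes the reverse inequality.

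The main obstacle is the gluing step: one must adapt $V$ to the analytic polar loci of both $T'$ and $T_{\min,\varepsilon}(\alpha)$ and choose $C$ so the two pieces of $\widetilde\varphi$ agree on an open neighbourhood of $\partial V$, ensuring the glued function is not merely piecewise quasi-psh but globally so with the curvature bound $\ge -\varepsilon\omega$. The disjointness hypothesis $E_{nn}(\alpha) \cap E = \emptyset$ is indispensable: it permits $V$ to be chosen disjoint from $E$, so that on $V$ one has the class identity $(\alpha+c_1(E))|_V = \alpha|_V$ in $H^{1,1}_{BC}(V)$ (since $[E]|_V = 0$), which is precisely what enables the transport of $T'|_V$ into a current in the class $\alpha$.
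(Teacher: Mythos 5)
Your reduction to the pointwise identity $\nu(\alpha,x)=\nu(\alpha+c_1(E),x)$ for $x\notin\mathrm{Supp}(E)$, and the easy inequality $\nu(\alpha+c_1(E),x)\le\nu(\alpha,x)$, are fine. The gap is in the reverse inequality, and it is fatal. Your gluing needs $\varphi_1\le\rho+C$ on a neighbourhood of $\partial V$, where $\rho$ is the potential of $T_{\min,\varepsilon}(\alpha)$; this requires $\rho$ to be bounded below near $\partial V$ wherever $\varphi_1$ is finite. You justify this by choosing $\partial V$ to avoid the polar loci, claiming they are ``analytic of positive codimension''. But the polar locus of a current with minimal singularities is in general not analytic (the paper itself stresses this, quoting Matsumura), and avoiding a polar set with $\partial V$ does not give a lower bound on $\rho$ near $\partial V$ anyway. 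Worse: if $\nu(\alpha,x)>0$ --- which is exactly the possibility you must exclude --- then $\rho$ tends to $-\infty$ along a set clustering at $x$, and every sphere $\partial V$ around $x$ meets the closure of that set, so no admissible $V$ and no finite $C$ exist; the construction is impossible precisely in the cases where the lemma has content. Note also that minimality of $T'$ only yields $\varphi_1\ge\rho-C'$, which is the wrong direction for your $\max$.

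A decisive sanity check: your argument uses the hypothesis $E_{nn}(\alpha)\cap E=\emptyset$ only to ``permit $V$ to be chosen disjoint from $E$'', which is automatic for any $x\notin E$; so if the argument worked it would prove the identity with no hypothesis on $E_{nn}(\alpha)\cap E$ at all. That identity is false: in the Cutkosky-type example given in the remark immediately following this lemma, $\alpha=\pi^*\beta+t\,c_1(\mathcal{O}(1))$ plus the effective divisor $(t_0-t)\mathcal{O}(1)$ is nef, yet $\nu(\alpha,x)>0$ at very general points $x$ of $\P(A_2)$ not lying on the chosen divisor. The paper's proof avoids this trap by gluing near $E$ rather than near $x$: it takes $\psi_\varepsilon-\varphi_E$ (where $\psi_\varepsilon$ is a regularisation with analytic singularities of codimension $\ge k+1$ for $\alpha+c_1(E)$, so this potential is well behaved away from $E$ but blows up to $+\infty$ along $E$) and caps it off on a tubular neighbourhood $V_E$ of $E$ by a regularised potential $\varphi_\varepsilon$ for $\alpha$ whose singular set misses $V_E$ --- and it is exactly the hypothesis $E_{nn}(\alpha)\cap E=\emptyset$ that guarantees such a $\varphi_\varepsilon$ exists. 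That is the step your argument is missing.
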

\begin{proof}
Fix $\alpha_0 \in \alpha, \beta_0 \in c_1(E)$ two smooth representatives.
By assumption, for any $\varepsilon >0$, there exists a quasi-psh function $\varphi_\varepsilon$ on $X$  with analytic singularities such that
$$\alpha_0+i \d \dbar \varphi_\varepsilon \geq -\varepsilon \omega$$
where $\omega$ is some Hermitian metric on $X$ (not necessarily K\"ahler). (For example, we can take a regularisation of the minimal potential $\varphi_{\min, -\frac{\varepsilon}{2}}$.)
We can assume that the singular set of $\varphi_\varepsilon$ has an empty intersection with $V_E$.
Here $V_E$ is some small tubular neighbourhood of $E$.

Let $\psi_\varepsilon$ be a family of quasi-psh functions on $X$  with analytic singularities such that
$$\alpha_0+\beta_0+i \d \dbar \psi_\varepsilon \geq -\varepsilon \omega.$$
We can assume that the singular set of $\psi_\varepsilon$ has codimension at least $k+1$.

Let $\varphi_E$ be a quasi-psh function on $X$ such that $\beta_0+i \d \dbar \varphi_E = [E]$ where $[E]$ is the current associated to $E$.
But definition, the pole of $\varphi_E$ is exactly the support of $E$.
In particular we have that $\psi_\varepsilon-\varphi_E$ is a well-defined quasi-psh function outside $E$ such that
$$\alpha_0+i \d \dbar (\psi_\varepsilon-\varphi_E) \geq -\varepsilon \omega$$
on $X \smallsetminus E$.

Now we glue the potentials to get a quasi-psh function $\Phi_\varepsilon$ with analytic singularities on $X$, such that
$$\alpha_0 +i \d \dbar \Phi_\varepsilon \geq -\varepsilon \omega.$$
We also demand that the singular set of $\Phi_\varepsilon$ be included in the singular set of $\psi_\varepsilon$.
This will finish the proof of the lemma.

On $X \smallsetminus V_E$ we define $\Phi_\varepsilon=\max(\psi_\varepsilon-\varphi_E, \varphi_\varepsilon+C_\varepsilon)$ where $C_\varepsilon$ is a constant which will be determined latter.
In particular, on $X \smallsetminus V_E$ we have
$$\alpha_0+i \d \dbar \Phi_\varepsilon \geq -\varepsilon \omega.$$
On $V_E$, we define $\Phi_\varepsilon= \varphi_\varepsilon+C_\varepsilon$.
On $X \smallsetminus V_E$, $\varphi_E$ is bounded and $\psi_\varepsilon$ is bounded from above.
Near the boundary of $V_E$, $\varphi_\varepsilon$ is also bounded since the singular set of $\varphi_\varepsilon$ has empty intersection with $V_E$.
Thus for $C_\varepsilon$ large enough near the boundary of $V_E$ $\psi_\varepsilon-\varphi_E < \varphi_\varepsilon+C_\varepsilon$.
In particular, $\Phi_\varepsilon$ is a global well defined quasi-psh function such that $\alpha_0+i \d \dbar \Phi_\varepsilon \geq -\varepsilon \omega.$
The singular set of $\Phi_\varepsilon$ in $X \smallsetminus V_E$ is included in the singular set of $\psi_\varepsilon$. On $V_E$, $\Phi_\varepsilon$ is smooth.
This finishes our construction.
\end{proof}
\begin{myrem}
{\em
The condition that the intersection is empty is necessary for the lemma. Otherwise, we have the following counter-example.

The construction uses Cutkosky's construction detailed in the next section. Let $Y$ be a projective manifold such that $\cN_Y=\cE_Y$. Let $\beta \in H^{1,1}(Y, \R)$ be a non psef class. Let $A_1, A_2$ be very ample divisors on $Y$.
Define
$$t_0:= \min \{t|\beta+tc_1(A_1)\; \mathrm{nef}\}.$$
We can assume that $\beta+t_0 c_1(A_2)$ is nef.
Define $X:= \P(A_1 \oplus A_2)$ and denote by $\pi: \P(A_1 \oplus A_2) \to Y$ the natural projection.
By Proposition 4 in the next section, $\pi^* \beta+t_0c_1(\cO(1))$ and $c_1(\cO(1))$ are nef.
Notice that $\cO(1)$ is an effective divisor since $H^0(X,\cO(1) )=H^0(Y, A_1 \oplus A_2) \neq 0$.

By Proposition 5, for any $t<t_0$, $\nu(\pi^* \beta+tc_1(\cO(1)), \P(A_2))>0$ and $E_{nn}(\pi^* \beta+tc_1(\cO(1))=\P(A_2)$.  
This shows that for any $t<t_0$, $\pi^* \beta+tc_1(\cO(1))$ is not nef in codimension 1.
In other words, the nef class $\pi^* \beta+t_0c_1(\cO(1))$ is a sum of not nef in codimension 1 class $\pi^* \beta+tc_1(\cO(1))$ and an effective divisor $(t_0-t)\cO(1)$.
Let $(s_1,s_2) \in H^0(Y,A_1) \oplus H^0(Y, A_2)=H^0(X, \cO(1))$ be a non-trivial section.
Then we have
$$V(s_1,s_2)=\{(x,\xi^*)| \xi^* \in (A_1 \oplus A_2)^*, \; \xi^*(s_1,s_2)=0\}.$$
Identify $\P(A_2)$ as $Y$,then we have $V(s_1,s_2)\cap \P(A_2)=V(s_2) \neq \emptyset$.
Similar calculation shows that for any $E\in |\cO(m)|$ for any $m \in N^*$ 
$$E_{nn}(\pi^* \beta+tc_1(\cO(1)) \cap E \neq \emptyset.$$
}
\end{myrem}
Now we give a version of the Kawamata-Viehweg vanishing theorem over a compact K\"ahler manifold.
\begin{mythm}
Let $(X,\omega)$ be a compact K\"ahler manifold of dimension $n$ and $L$ a nef in codimension 1 line bundle on $X$. Assume that $\langle L^2 \rangle \neq 0$. Assume that there exists an effective $\N$-divisor $D$ such that $c_1(L)=c_1(D)$. Then
$$H^q(X,K_X+L) = 0$$
for $q \ge n-1$.
\end{mythm}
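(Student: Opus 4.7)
The plan is to adapt the induction-on-dimension argument of \cite{DP02} from the nef case to our nef-in-codimension-$1$ case. The role of the classical Kawamata-Viehweg theorem is played by Corollary 2 (Cao-Guan-Zhou vanishing in numerical-dimension form), and the non-linearity of the movable intersection is controlled by Lemma 4. The base case $n=2$ is immediate: nef in codimension $1$ coincides with nef on a surface (Lemma 2), and $\langle L^2\rangle\ne 0$ forces $\nd(L)=2$ (Remark 9), so $L$ is big and nef, and the K\"ahler Kawamata-Viehweg theorem applies.

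For the induction step, the case $q=n$ is handled by Serre duality together with Lemma 5: $H^n(X,K_X+L)^{\vee}\cong H^0(X,-L)$, and $L$ is a non-trivial psef line bundle (since $\langle L^2\rangle\ne 0$ forces $L\ne\cO_X$), hence $H^0(X,-L)=0$. For the main case $q=n-1$, use that $\nd(L)\ge 2$ (Remark 9) and the hypothesis $c_1(L)=c_1(D)$: write $L\cong\cO_X(D)\otimes P$ with $P$ numerically trivial, and equip $L$ with the positive singular metric $h$ of curvature current $[D]$ obtained from the canonical singular metric on $\cO_X(D)$ and a flat metric on $P$. Corollary 2 applied to $(L,h)$ then gives that the morphism
\[
H^{n-1}\bigl(X,K_X\otimes L\otimes\cI(h)\bigr)\longrightarrow H^{n-1}(X,K_X\otimes L)
\]
vanishes. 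Combined with the long exact sequence of
\[
0\to K_X\otimes L\otimes\cI(h)\to K_X\otimes L\to(K_X\otimes L)\otimes(\cO_X/\cI(h))\to 0,
\]
this produces an injection
\[
H^{n-1}(X,K_X\otimes L)\hookrightarrow H^{n-1}\bigl(X,(K_X\otimes L)\otimes(\cO_X/\cI(h))\bigr).
\]

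To kill the right-hand side, take a log resolution $\mu:\widetilde X\to X$ of $D$ making $\mu^{\ast}D$ simple normal crossing. A d\'evissage along the components of $\mu^{\ast}D$ reduces to showing, for each smooth prime component $Y$, that $H^{n-1}(Y,K_Y+M)=0$, where $M$ is determined by the adjunction $(K_X+L)|_Y=K_Y+M$ (so $M=(L-Y)|_Y$ up to pullback corrections). The plan is to apply the induction hypothesis on $Y$ of dimension $n-1$. The main obstacle is verifying the three hypotheses of the theorem for $M$ on $Y$: nefness in codimension $1$, $\langle M^2\rangle\ne 0$, and numerical equivalence to an effective integer divisor. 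The third is delicate because the normal-bundle contribution $Y|_Y$ to $M$ need not be effective, and one must massage the SNC data (adjusting the choice of $Y$ among the components and twisting by exceptional divisors). The first two are addressed by Lemma 4: the estimate $(\alpha^k,\Theta)\ge\langle\alpha^k,\Theta\rangle$ under nef-in-codimension-$k$ lets one transfer $\langle L^2\rangle\ne 0$ on $X$ to $\langle M^2\rangle\ne 0$ on $Y$ via a projection-formula argument, while nefness in codimension $1$ of $M$ follows from bounding the restricted Lelong numbers of $T_{\min,\varepsilon}\in\{L\}$. A further difficulty, flagged in Remark 6 via the extension-failure examples of \cite{CT1,CT2}, is that closed positive currents may not extend across submanifolds in merely nef-and-big classes, so the log resolution must be chosen so that the non-nef locus of $L$ meets $Y$ transversally.
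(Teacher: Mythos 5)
Your opening moves match the paper: the case $q=n$ via Serre duality and Lemma 5, and the use of Corollary 2 to make $H^{n-1}(X,K_X\otimes L\otimes\cI(h_{\min}))\to H^{n-1}(X,K_X\otimes L)$ vanish, hence to embed $H^{n-1}(X,K_X\otimes L)$ into the cohomology of the quotient sheaf supported on $D$. But from that point on there is a genuine gap: you propose to \emph{kill} $H^{n-1}\bigl(X,(K_X\otimes L)\otimes(\cO_X/\cI(h))\bigr)$, and this group is not zero in general. Since $\cI(h)=\cO(-D)$ outside a set of codimension $\ge 2$, this group is essentially $H^{n-1}(D,K_D\otimes P|_D)\cong H^0(D,-P|_D)^*$ (with $L=\cO(D)\otimes P$, $P$ numerically trivial); when $P$ is trivial this is $H^0(D,\cO_D)^*$, which is never zero. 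The correct endgame, and the one the paper carries out, is to analyse the connecting map $H^0(D,-P|_D)\to H^n(X,K_X+L-D)\cong H^0(X,-P)^*$: in the case $P=\cO_X$ one must show $h^0(D,\cO_D)=1$, which requires proving that $D$ is connected (via the Hodge index theorem on $H^{1,1}$, using Lemma 4 and Lemma 6 to rule out a decomposition $D=A+B$ with $A\cap B=\emptyset$) and that $\cO_D$ has no nilpotent sections (a combinatorial argument on the multiplicities $n_j,\mu_j$ using pseudo-effectivity of $L|_{D_i}$); in the case $P$ non-trivial one must show $H^0(D,-P|_D)=0$ outright, which the paper does via the Albanese map and a proper-subtorus argument. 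None of these ingredients appear in your plan.

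The induction on dimension you propose is the mechanism of the \emph{projective} Theorem 4, where one cuts by generic members of $|kA|$; it does not transfer to the K\"ahler setting because there are no hyperplanes, and substituting the components $Y$ of (a log resolution of) $D$ for hyperplane sections fails on two counts. First, the hypotheses are not inherited: there is no reason that $\langle M^2\rangle\neq 0$ for $M=(L-Y)|_Y$ on a component of $D$ --- indeed the paper has to work to exclude the degenerate possibility $L|_{D_i}\equiv 0$, and does so using $(L^2\cdot\omega^{n-2})>0$ on $X$, not an inductive hypothesis on $D_i$. Second, what the d\'evissage actually requires on an $(n-1)$-dimensional component is the \emph{top-degree} vanishing $H^{\dim Y}(Y,K_Y+M)=0$, which by Serre duality is $H^0(Y,-M)=0$; since $M|_Y$ differs from a numerically trivial bundle by exceptional corrections, this is exactly the delicate non-vanishing/nilpotence question above, not an instance of the inductive statement. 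So the induction never gets traction, and the heart of the proof (Hodge index connectedness, the nilpotent-section analysis, and the Albanese argument) is missing.
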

\begin{proof}
In the case $q = n$, we have
$H^n(X,K_X+L)=H^0(X,-L)^*$ by Serre duality. For $L$ psef, $-L$ has
no section unless $L$ is trivial by Lemma 5.
Since $\langle L^2 \rangle \neq 0$, $L$ is not trivial.
Therefore the only interesting case
is $q = n-1$. We divide the proof into two cases.\medskip

\noindent
{\it Case 1}\/: We assume that $L=D$. 
Since the canonical section of $D$ induces a positive singular metric on $(L,h)$ with multiplier ideal sheaf $\cI(h) \subset \cO(-D)$.
In fact we have equality outside an analytic set whose all irreducible components have codimension larger than 2.
Write $D= \sum_i n_i D_i$ where $n_i  \ge 0$ and $D_i$ are the irreducible components of $D$.
Define 
$$Y=(D_{\mathrm{red}})_{\mathrm{Sing}}= \bigcup_{i \neq j} (D_i \cap D_j) \cup \bigcup_i D_{i, \mathrm{Sing}}$$
where $D_{i, \mathrm{Sing}}$ means the singular part of $D_i$.
It is easy to see that we have an equality outside $Y$ and that each irreducible components of $Y$ is of codimension larger than 2.

In particular, the short exact sequence
$$0 \to \cI(h) \to \cO(-D) \to \cO(-D)/\cI(h) \to 0$$
induces that
$$H^{n-1}(X, K_X+L \otimes \cI(h)) \to H^{n-1}(X, K_X+L-D) \to H^{n-1}(X, K_X+L \otimes \cO(-D)/\cI(h))=0$$
since the support of $\cO(-D)/\cI(h)$ is included in $Y$.

Denote by $h_{\min}$ the minimal metric on $L$ where we have a natural inclusion of $\cI(h) \subset \cI(h_{\min})$.
Thus we have the following commuting diagram
\[ \begin{tikzcd}
H^{n-1}(X, K_X+L \otimes \cI(h)) \arrow{r}{} \arrow[swap]{d}{} & H^{n-1}(X, K_X+L \otimes \cI(h_{\min})) \arrow{d}{} \\
H^{n-1}(X, K_X+L-D) \arrow{r}{}& H^{n-1}(X, K_X+L).
\end{tikzcd}
\]
By Corollary 2 proved in Section 3 and the condition that $\nd(L)\ge 2$, we know that the morphism 
$$ H^{n-1}(X, K_X+L \otimes \cI(h_{\min})) \to  H^{n-1}(X, K_X+L)$$
is the $0$ map. Since the left vertical arrow is surjective
in the above diagram, we conclude that the morphism
$$H^{n-1}(X, K_X+L -D) \to  H^{n-1}(X, K_X+L)$$
is also the 0 map. Thus the short exact sequence
$$
0 \to  K_X + L-D \to K_X + L \to K_X + L \vert_D = K_{D} \to 0
$$
gives in cohomology
$$
H^{n-1}(X, K_X+L-D) \to H^{n-1}(X,K_X + L) \to H^{n-1}(D,K_{D}) \simeq 
H^0(D,\cO_{D}) \to H^n(X,K_X+L-D)  \to 0.
$$ 
On the other hand, $H^n(X,K_X+L-D) \simeq H^0(X, \cO_X) \simeq \C$.
Therefore we need only show that
$$
h^0(D, \cO_{D}) = 1.
$$ 
More precisely, $D$ is an effective Cartier divisor in the manifold $X$.
Notice that $D$ is a (possibly non reduced) Gorenstein variety.
In this case, adjunction gives the dualizing sheaf $K_D$ as $(K_X+D)|_D$.
Moreover, Serre duality holds in the same form as in the smooth case.

To calculate the dimension of global sections of $D$, first we show that $D$ is connected.
In fact, 
otherwise we would have $D = A+B$ with $A$ and $B$ effective non-trivial divisors such that $A \cap B = \emptyset$.
In particular we have
$(A \cdot B \cdot \omega^{n-2})= 0$.
But $A$ and $B$ are necessarily nef in codimension 1 by Lemma 6.

We recall the Hodge Index Theorem on a compact K\"ahler manifold $(X, \omega)$ as Theorem 6.33 and 6.34 in \cite{Voi}.
By the Hard Lefschetz theorem, we have
$$H^2(X, \C)= \C\{ \omega\} \oplus H^2(X, \C)_{\mathrm{prim}}$$
where $H^2(X, \C)_{\mathrm{prim}}$ means primitive classes.
The intersection form $(\alpha, \beta) \mapsto (\alpha \cdot \beta \cdot \omega^{n-2})$ has signature $(1, h^{1,1}(X)-1)$ on $H^{1,1}(X)$ since $H^2(X, \C)_{\mathrm{prim}}$ is orthogonal to $\omega$ and the intersection form is negative definite on $H^2(X, \C)_{\mathrm{prim}}$.

On the other hand, by Lemma 4, we have that
$$(A \cdot A \cdot \omega^{n-2}) \geq \langle A \cdot A \cdot \omega^{n-2}\rangle \ge 0$$
and similar inequality for $B$.
We also notice that 
$$(L \cdot L \cdot \omega^{n-2}) \geq \langle L \cdot L \cdot \omega^{n-2}\rangle > 0.$$
Since the intersection form (unlike the movable intersection) is bilinear, we have either $(A \cdot A \cdot \omega^{n-2}) >0$ or $(B \cdot B \cdot \omega^{n-2}) >0$.
Without loss of generality, assume that $(A \cdot A \cdot \omega^{n-2}) >0$.
Thus $B \in A^{\perp}$ and  $(B \cdot B \cdot \omega^{n-2}) \geq 0$.
The Hodge index theorem implies that $B=0$ which is
a contradiction to our assumption.
Hence $D$ is connected, and if 
$h^0(D,\cO_{D}) \geq 2$, then $\cO_{D}$ contains a nilpotent section $t
\neq 0$. 
In other words, the pull back of $t$ under the natural morphism $D_{\mathrm{red}} \to D$ is 0 but lies as a non trivial section in $H^0(D_{\mathrm{red}}, \cO(-\sum_{j \in I}\mu_jD_j))$ for some $1 \leq \mu_j \leq n_j$ for all
$j$.
Let 
$$ J := \{ j \in J \vert {{n_j} \over {\mu_j}} \; {\rm maximal}\}$$
and let $c = {{n_j} \over {\mu_j}}$ be the maximal value. 
Notice that $\mathrm{div}(t)|_{D_i}=- \sum_{j \in I}\mu_j D_j \vert_{ D_i}$ is effective
(possibly $0$) for all $i$. 
We claim that it is impossible that $c =   {{n_j} \over {\mu_j}}$
for all $j \in I$. 
Otherwise, $ L \vert_{ D_i} =c \sum \mu_j D_j \vert_{ D_i} $ 
is psef. ($L$ is nef in codimension 1, so its restriction to any prime divisor is psef.)
Its dual is effective, hence $L \vert_{ D_i} \equiv 0$ for all $i$.
This implies that $ (L \cdot L \cdot \omega^{n-2})= 0$, contradiction.

Thus we find some $j$ such that 
$$
c>{{n_j} \over {\mu_j}}.
$$ 
By connectedness of $D $ we can choose $i_0 \in J$ in such a way that 
there exists $j_1 \in I \smallsetminus J$ with $D_{i_0} \cap D_{j_1} \ne
\emptyset$. 
Now 
$$
\sum_{j \in I}(n_j - c \mu_j)D_j \vert_{ D_{i_0}}
$$
is pseudo-effective as a sum of a psef and an effective line bundle
(this has nothing to do with the choice of $i_0$). Since the sum, taken over
$I$ is the same as the sum taken over $I \smallsetminus \{i_0\}$; we conclude
that 
$$
\sum_{j \ne i_0}(n_j - c\mu_j)D_j \vert_{ D_{i_0}}
$$
is pseudo-effective, too. Now all $n_j- c \mu_j \leq 0$ and $n_{
j_1} - c \mu_{j_1} < 0$ with $D_{j_1} \cap D_{i_0} \ne \emptyset$,
hence the dual of
$$
\sum_{j \ne i_0}(n_j - c\mu_j)D_j \vert D_{i_0}
$$
is effective and non-zero, a contradiction. This finishes the proof
of Case 1.\medskip

\noindent
{\it Case 2}\/: general case. We can write 
$$
L = D + L_0
$$
where $L_0^m \in \Pic^0(X)$ (The exponent $m$ is there because there
might be torsion in $H^2(X,\Z)$; we take $m$ to kill the denominator
of the torsion part). We may, in fact, assume that $m = 1$; otherwise, we pass to a finite \'etale cover $\tilde X$ of $X$ and argue there
(the vanishing on $\tilde X$ clearly implies the vanishing on $X$ by Leray spectral sequence).
In other words, we write $L$ as a sum of $D$ and a flat line bundle $(L_0,h_0)$.
Here $h_0$ is the flat metric.
Thus there exists a bijection between singular positive metrics on $L$ and those on $D$, via the tensor product by $h_0$.  
In particular, the minimal metric on $L$ is the minimal metric on $D$, tensored by $h_0$.

The short exact sequence used above is modified into
$$ 0 \to  K_X + L-D \to K_X + L \to (K_X + L)\vert_D 
= (K_{D} + L_0) \vert_D \to 0.  $$ 
Taking cohomology as before and using a similar discussion, the arguments come down to proving 
$$
H^0(D,-L_0 \vert_D) = 0
$$
since $H^n(X, K_X+L-D) \simeq H^0(X, -L_0) =0$.

The argument on the connectedness of $D$ still works since the arguments only involve the first Chern class and since $L_0$ has no contribution in the first Chern class.
If $-L_0 \vert_D \ne 0$, then we see as above that $-L_0 \vert_D$
cannot have a nilpotent section.
Since $L_0$ is flat, adding a multiple of $L_0$ does not change the pseudo-effectiveness.
By adding a suitable such multiple, the arguments on the non-existence of nilpotent section are still valid.

So if $
H^0(D,-L_0 \vert_D) = 0
$ fails, then $-L_0 \vert_D$
has a section $s$ such that $s \vert_{D_{\mathrm{ red}}}$ has no zeroes.
In other words $-L_0 \vert_{D_{\mathrm{ red}}}$ is trivial. But then $-L_0 \vert_D$ is
trivial,
since the nowhere vanishing section of $H^0(X, -L_0 \otimes_{\cO_X} \cO_X / \cI_{D_{\mathrm{red}}})$ is mapped to a nowhere vanishing section in $H^0(X, -L_0 \otimes_{\cO_X} \cO_X / \cI_{D})$ by passing to the quotient. 

Now let $\alpha: X \to \Alb(X)$ be the Albanese map with image $Y$.
Then $L_0 = \alpha^*(L_0')$ with some line bundle $L_0'$ on $\Alb(X)$.
(We observe that $\Pic^0(X) \cong \Pic^0(\Alb(X))$.)
Notice that $L_0'$ is a non trivial line bundle with $c_1(L_0')=0$. 
Since $L_0 \vert_D$ is
trivial and $L_0$ is non trivial, we conclude that $\alpha (D) \neq Y$.
We claim that $\alpha(D)$ is contained in some proper subtorus $B$ of $\Alb(X)$.

The reason is as follows. 
Let $\nu: \tilde{X} \to X$ be a modification such that $\nu^* (D)$ is a SNC divisor.
Denote by $E_j$ the irreducible components of $\nu^* (D)$.
Define $S \subset \prod_{i} \Pic^0(E_i)$ the connected component containing $(\nu^* L_0|_{E_i})$ of
$$ \{(L_i) \in \prod_i \Pic^0(E_i)| L_i|_{E_i \cap E_j}= L_j|_{E_i \cap E_j}\}.$$
By Proposition 1.5 of \cite{BL}, $S$ is a subtorus since $S$ is a translation of the kernel of
$$\prod_{i}  \Pic^0(E_i) \to \prod_{i,j, i \neq j} \Pic^0(E_i \cap E_j)$$
$$(L_i) \mapsto (L_i|_{E_i \cap E_j}- L_j|_{E_i \cap E_j}).$$
Notice that $\Pic^0(E_i)$ is a torus by Hodge theory since $E_i$ is smooth.
The natural group morphism of $\Pic^0(X) \to S$ given by $L \mapsto (\nu^* L|_{E_i})$ induces by duality the following commuting diagram
$$
\begin{tikzcd}
\prod_{i}  \Pic^0(E_i)^* \arrow[r]\arrow[dr]& S^*\arrow[d]\\
 & (\Pic^0(X))^* \cong \Alb(X).
 \end{tikzcd}$$
Since $L_0 \in S$ is non-trivial, the image of $S^*$ as a complex torus is a proper subtorus in $\Alb(X)$.
We denote its image as $B$.
(Let us observe that by Proposition 1.5 of \cite{BL}, the image of a homomorphism of complex tori is a subtorus.)

Consider the induced
map
$$
\beta: X \to \Alb(X)/B
$$
and denote its image by $Z$. ($Z$ can be singular!)
The image $\beta (D)$ is a point $p$ by construction.
Let $U$ be a Stein neighbourhood of $p$ in $Z$(or some coordinate chart of $p$).
Denote by $m_p$ the maximal ideal of $p$ in $Z$. 
In particular, for any $k \in \N^*$, $m_p^k$ is globally generated on $U$ (by Cartan theorem A).

Let $D=\sum_i n_i D_i$ and define $n_{\max}:=\max (n_i)$. 
Then we have the inclusion $\beta^*H^0(U, m_p^{n_{\max}}) \subset H^0(D,\cO(-n_{\max} D_{\mathrm{red}})|_D) \subset  H^0(D,\cO(-D)|_D)$
where the second inclusion uses the fact that $n_{\max} D_{\mathrm{red}}-D$ is an effective divisor in~$X$.
In particular, for any $i$, $H^0(D_i,\cO(-D)|_{D_i}) \neq 0$.
On the other hand, $\cO(D)|_{D_i}$ is psef since $D$ is nef in codimension 1.
(Observe that nefness is a numerical property. Since $c_1(L_0)=0$, $D$ is nef in codimension 1 as $L$ is.)
By Lemma 5, $D|_{D_i}$ is trivial.
 
Thus we have for any $i$
$$(D \cdot D_i \cdot \omega^{n-2})=\int_{D_i} c_1(D|_{D_i}) \wedge \omega^{n-2}=0.$$
This implies that $(L^2 \cdot \omega^{n-2})=(D^2 \cdot \omega^{n-2})=0$.
On the other hand, since $L$ is nef in codimension 1, 
$(L^2 \cdot \omega^{n-2})\geq  \langle
L^2 \cdot \omega^{n-2} \rangle$.
But this is a contradiction with our assumption.
\end{proof}
\begin{myrem}
{\em
If $D$ is a smooth reduced divisor, we can also argue as follows at the end of Case 2. We observe that
$L_0$ is a non-trivial element in a translation of the kernel of $\Pic^0(X) \to \Pic^0(D)$. 
On the other hand, we have
$$H^{n-1}(X, K_X+D)=H^1(X, -D)=0 \to H^1(X, \cO_X) \to H^1(D, \cO_D)$$
since by Case 1, $H^{n-1}(X, K_X+D)=0$.
However, $H^1(X, \cO_X) \to H^1(D, \cO_D)$ is the tangent map of $\Pic^0(X) \to \Pic^0(D)$.
By Proposition 1.5 of \cite{BL}, the kernel is discrete.
Moreover, the connected component containing the zero point of the kernel is of finite index in the kernel. In particular, $L_0$ is a torsion element.
This yields a contradiction.
}
\end{myrem}
\section{Examples and counter-examples}
In this section, we first give for each $k \in \N^*$ an example of a psef class $\alpha_k$ on some manifold $X_k$, such that $\alpha_k$ is nef in codimension $k$ but not nef in codimension $k+1$.
This shows in particular that the inclusion of the various types of nef cones can be strict.

For the convenience of the reader, we recall Cutkosky's construction described in \cite{Bou04}, as well as all needed material for our use.

Let $\cE$ be a vector bundle of rank $r$ over a manifold $Y$ and $L$ be a line bundle over $Y$.
Since there exists a surjective bundle morphism given by projection $\cE \oplus L \to \cE$, 
we can view $D:=\P(\cE)$ as a closed submanifold of $\P(\cE \oplus L)$.
Note that the restriction of $\cO_{\P(\cE \oplus L)}(1)$ on $\P(\cE)$ is the tautological line bundle $\cO_{\P(\cE)}(1)$.
We notice that the canonical line bundle of the projectivization of a vector bundle $\P(\cE)$ is given by
$$K_{\P(\cE)} =\cO_{\P(\cE)}(-(r+1)) + \pi^*(K_Y +\mathrm{det} \cE)$$
where $\pi: \P(\cE) \to Y$ is the projection.
From the short exact sequence
$$0 \to T_{\P(\cE)} \to T_{\P(\cE \oplus L)}|_{\P(\cE)} \to N_{\P(\cE)/ \P(\cE \oplus L)}=\cO(D)|_{\P(\cE)} \to 0$$
we have
$$K_{\P(\cE \oplus L)}|_{\P(\cE)} =K_{\P(\cE)} \otimes \cO(-D)|_{\P(\cE)}.$$
Using the formula for the canonical line bundle, we have
$$\cO(1)|_{\P(\cE)}=(\cO(D) \otimes \pi^* L)|_{\P(\cE)}.$$
We observe that by the Leray-Hirsh theorem for Bott-Chern cohomology, 
$$H^{1,1}_{BC}(\P(\cE \oplus L), \R)=\R c_1(\cO(1)) \oplus \pi^* H^{1,1}_{BC}(Y, \R).$$
In particular, this implies that the inclusion $i: \P(\cE) \to \P(\cE \oplus L)$ induces an isomorphism on $H^{1,1}_{BC}$.
Hence we find that on $\P(\cE \oplus L)$
$$c_1(\cO(1))=c_1(\cO(D))+\pi^* c_1(L).$$
Now let $Y$ be a compact complex manifold of dimension $m$ and $L_0, \cdots, L_r$ the line bundles over $Y$.
We define 
$$X := \P(L_0 \oplus \cdots \oplus L_r).$$
We denote $H:= \cO(1)$ the tautological line bundle over the projectivization and $h:=c_1(H)$.
For any $i$, the projection $L_0 \oplus \cdots L_r \to L_0 \oplus \cdots \hat{L_i} \oplus \cdots \oplus L_r$ induces inclusions of hypersurfaces
$$D_i:= \P(L_0 \oplus \cdots\oplus \hat{L_i} \oplus \cdots \oplus L_r).$$
By the above discussion
$$d_i +l_i=h$$
where $d_i:=c_i(\cO(D_i))$ and $l_i:=c_1(L_i)$.
In fact, by calculating the transition function, we can show that $\cO(1)$ is linear equivalent to $L_i + D_i$.
However, the relation of Chern classes is enough for our use here.

We have the following explicit description of nef cone and psef cone in this case.
We denote by $\rC$ the cone generated by the $l_i$.
\begin{myprop}
Let $\alpha \in H^{1,1}_{BC}(X, \R)$ be a class that is decomposed as $\alpha= \pi^* \beta+\lambda h$. Then

(1) $\alpha$ is nef iff $\lambda \geq 0$ and $\beta + \lambda \rC$ is contained in $\cN_Y$.

(2) $\alpha$ is psef iff $\lambda \geq 0$ and $(\beta +\lambda \rC) \cap \cE_Y \neq \emptyset$.
\end{myprop}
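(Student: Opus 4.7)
The plan is to prove both halves of Proposition 4 by the same pattern: necessity via restriction to canonical subvarieties, and sufficiency via an explicit construction exploiting the splitting $L_0 \oplus \cdots \oplus L_r$ together with the identity $h = d_i + \pi^* l_i$.

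For (1), necessity: restricting $\alpha$ to a fiber $F = \pi^{-1}(y) \cong \P^r$ yields $\alpha|_F = \lambda \cdot [\mathrm{hyperplane}]$, forcing $\lambda \geq 0$. For each $i$, the quotient $L_0 \oplus \cdots \oplus L_r \twoheadrightarrow L_i$ defines a section $\sigma_i\colon Y \to X$ with $\sigma_i^* H = L_i$, so $\sigma_i^* \alpha = \beta + \lambda l_i$ must lie in $\cN_Y$; convexity of $\cN_Y$ then propagates this to every convex combination $\beta + \lambda \sum_i c_i l_i$, which is the content of $\beta + \lambda \rC \subset \cN_Y$. For sufficiency, fix a smooth $\gamma \in \beta$, a K\"ahler form $\omega_X$ on $X$, and $\epsilon > 0$. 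Using nefness of each $\beta + \lambda l_i$, adjust the smooth Hermitian metric $h_{L_i}$ on $L_i$ by a $\d\dbar$-potential so that its Chern curvature $\theta_i$ satisfies $\gamma + \lambda \theta_i \geq -\epsilon \omega_Y$. The orthogonal direct-sum metric on $\bigoplus L_i$ then induces a Hermitian metric $h_H$ on $H$ with the classical pointwise decomposition
\[
c_1(H, h_H)(y, [v]) = \omega_{\mathrm{FS}}^{\mathrm{vert}}(y, [v]) + \pi^*\!\Bigl(\sum_i w_i(v)\, \theta_i\Bigr), \qquad w_i(v) = \frac{|v_i|^2_{h_{L_i}}}{\sum_j |v_j|^2_{h_{L_j}}},
\]
where the $w_i(v)$ are convex weights summing to $1$. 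Consequently
\[
\pi^* \gamma + \lambda\, c_1(H, h_H) = \pi^*\!\Bigl(\sum_i w_i(\gamma + \lambda \theta_i)\Bigr) + \lambda\, \omega_{\mathrm{FS}}^{\mathrm{vert}} \geq -\epsilon\, \pi^*\omega_Y \geq -\epsilon C\, \omega_X
\]
for some $C > 0$ depending only on $\omega_X$ and $\omega_Y$, exhibiting smooth representatives of $\alpha$ with arbitrarily small negative part, so $\alpha$ is nef.

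For (2), sufficiency is a direct construction: pick $v = \sum_i c_i l_i \in \rC$ with $\sum_i c_i = 1$ (scaling within the cone as needed) and $\beta + \lambda v \in \cE_Y$ carrying a closed positive current $S$; the identity $h = d_i + \pi^* l_i$ gives the rewriting $\alpha = \pi^*(\beta + \lambda v) + \lambda \sum_i c_i d_i$, and the current $\pi^* S + \lambda \sum_i c_i [D_i]$ is then a closed positive $(1,1)$-current in $\alpha$. For necessity, $\lambda \geq 0$ follows by slicing a positive current $T \in \alpha$ along a generic fiber, where the slice is a positive measure with total hyperplane-mass $\lambda$. The existence of $v \in \rC$ with $\beta + \lambda v \in \cE_Y$ is the main obstacle. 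My approach would be to perform a Siu-type decomposition $T = \sum_i a_i [D_i] + R$ along the canonical hypersurfaces $D_i$ and argue that the residual positive current $R$ direct-images (after the appropriate slicing and averaging along fibers) to a closed positive current on $Y$ in the class $\beta + \lambda \sum_i a_i l_i \in \beta + \lambda \rC$, matching cohomology classes once more via $h = d_i + \pi^* l_i$. Justifying this direct-image step rigorously and verifying that the coefficients $a_i$ are genuinely non-negative and contribute a point of $\rC$ is the technically delicate part of the argument.
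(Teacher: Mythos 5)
Your necessity arguments in (1) and your sufficiency argument in (2) coincide with the paper's (generic fibre for $\lambda\geq 0$; the sections $\P(L_i)\cong Y$ with $\cO(1)|_{\P(L_i)}=L_i$ plus convexity of $\cN_Y$; the rewriting $\alpha=\pi^*(\beta+\lambda\sum t_il_i)+\lambda\sum t_id_i$ with the currents $[D_i]$). Your sufficiency argument for (1) is genuinely different: the paper writes $\alpha=\pi^*(\beta+\lambda l_i)+\lambda d_i$ for each $i$, deduces that the non-nef locus is contained in each $D_i$, and concludes from $\bigcap_i D_i=\emptyset$ that $\nu(\alpha,x)=0$ everywhere, hence $\alpha$ is nef by Boucksom's criterion. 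Your metric construction also works and is more self-contained, but the claimed pointwise identity $c_1(H,h_H)=\omega_{\mathrm{FS}}^{\mathrm{vert}}+\pi^*(\sum_i w_i\theta_i)$ is not exact: the weights $w_i$ vary along the fibres and create mixed horizontal--vertical terms. What is true, and suffices, is the inequality $c_1(H,h_H)\geq\sum_i w_i\,\pi^*\theta_i$ coming from $i\d\dbar\log\sum_j e^{u_j}\geq\sum_j w_j\,i\d\dbar u_j$ (the discarded term is a Cauchy--Schwarz variance of the gradients). Note also that in both halves the relevant set is really the convex hull of the $l_i$ (coefficients summing to $1$), since $\lambda h=\sum t_i(\lambda\pi^*l_i+\lambda d_i)$ requires $\sum t_i=1$; you flag this, and the paper is equally loose in calling $\rC$ a cone.

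The genuine gap is the necessity direction of (2), exactly where you locate the difficulty, and the proposed repair does not go through. The map $\pi$ has fibre dimension $r$, so $\pi_*$ annihilates $(1,1)$-currents as soon as $r\geq 2$, and ``slicing along the fibres'' amounts to restricting the residual current $R$ of the Siu decomposition to one of the codimension-$r$ sections $\P(L_j)$; this restriction need not be well defined, since the polar set of $R$ can contain every $\P(L_j)$ (Proposition 5 of the paper shows the minimal multiplicity can be positive precisely on such strata, which is the whole point of the construction). The paper instead argues by induction on $r$, peeling off one divisor at a time: for a positive current $T\in\alpha$, the current $T-\nu(T,D_0)[D_0]$ is positive with generic Lelong number $0$ along the \emph{hypersurface} $D_0$, so its restriction to $D_0$ is a well-defined positive current; $D_0=\P(L_1\oplus\cdots\oplus L_r)$ is again a split projective bundle of one lower rank over $Y$, and using $d_0=h-\pi^*l_0$ the restricted class is $\pi^*(\beta+\nu(T,D_0)l_0)+(\lambda-\nu(T,D_0))h$, with $0\leq\nu(T,D_0)\leq\lambda$ read off from a generic fibre. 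The inductive hypothesis then yields a point of $(\beta+\nu(T,D_0)l_0+(\lambda-\nu(T,D_0))\rC_0)\cap\cE_Y\subset(\beta+\lambda\rC)\cap\cE_Y$, where $\rC_0$ is generated by $l_1,\dots,l_r$. Your coefficients $a_i=\nu(T,D_i)$ are indeed non-negative with $\sum_i a_i\leq\lambda$, but without this iterated restriction you never exhibit a positive current on $Y$ in any class $\beta+\lambda v$, so this half of (2) remains unproved in your write-up.
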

\begin{proof}
We notice that if $\alpha$ contains a positive current $T=\theta + i \d \dbar \varphi$ with $\theta$ smooth, then the pluripolar set $P(\varphi)=\{\varphi=-\infty\}$ is of Lebesgue measure 0. Hence, by the Fubini theorem, the set
$$\{y \in Y, \pi^{-1} (y) \subset P(\varphi)\}$$
is of Lebesgue measure 0.
For $y$ outside the measure 0 set, $\alpha|_{\pi^{-1}(z)}$ is the class of $T|_{\pi^{-1}(z)}$.
It is also equal to the class of $\lambda c_1(\cO_{\P^r}(1))$, and this implies that $\lambda \geq 0$. We always assume in the following that $\lambda \geq 0$.

(1) If $\alpha$ is nef, the restriction of $\alpha$ to $\P(L_i)$ for any $i$ is also nef where $\P(L_i)$ is biholomorphic to $Y$ given by $\pi$.
Note that $\alpha|_{\P(L_i)}=\lambda l_i + \beta$ is nef as a restriction of nef class.
So $\beta + \lambda \rC$ is contained in $\cN_Y$.

On the other hand, $\alpha= \pi^* \beta +h=\pi^*(\beta + \lambda l_i)+\lambda d_i$ for any $i$ where $\beta + \lambda l_i$ is nef by assumption. Hence the non-nef locus of $\alpha$ is contained in $D_i$. 
Since the intersection of all $D_i$ is empty, we conclude that $\alpha$ is nef.

(2) Let $t_i \in [0,1]$ such that $\sum t_i =1$ 
and $\beta+\sum_{i=0}^r t_i l_i \in \cE_Y$.
Hence $h= \sum t_i h=\sum t_i \pi^* l_i+ \sum t_i d_i$ and 
$\alpha= \pi^* (\beta + \lambda \sum t_i l_i)+ \lambda \sum t_i d_i$.
$d_i$ is psef since it contains the positive current associated to $D_i$.
As a sum of psef classes, $\alpha$ is psef.

For the other direction, we argue by induction. When $r=0$, $X=Y$ and $\alpha= \beta +\lambda l_0$.
By the assumption that $\alpha$ is psef, we have
$$\alpha \in (\beta +\lambda \rC) \cap \cE_Y.$$
Continue the induction on $r$.
Let $T$ be a closed positive current in $\alpha$.
we have that $\alpha -\nu(T, D_0)d_0$ is psef containing the current $T - \nu(T,D_0)[D_0]$.
And $(\alpha -\nu(T, D_0)d_0)|_{D_0}$ is psef since the restriction of the current $T - \nu(T,D_0)[D_0]$ on $D_0$ is well defined.
Now $D_0$ is the projectivisation of a vector bundle of rank $r$ over $Y$.
As a cohomology class 
$$\alpha-\nu(T, D_0)d_0= \pi^* (\beta +\lambda l_0)+(\lambda-\nu(T,D_0))d_0$$
Restrict $\alpha$ on some fibre of $\pi$ as above.
We have that $\lambda \geq \nu(T, D_0)$.
By induction, we see that the psef class $(\alpha -\nu(T, D_0)d_0)|_{D_0}$, which is also equal to
$\pi^*(\beta +\nu(T, D_0)l_0)+(\lambda-\nu(T, D_0))h$,
satisfies
$$(\beta +\nu(T, D_0)l_0 +(\lambda-\nu(T, D_0))\rC_0 )\cap \cE_Y \neq \emptyset$$
where $\rC_0$ is the cone generated by $l_1, \cdots, l_r$.
In other words,
$$(\beta +\lambda \rC) \cap \cE_Y \neq \emptyset.$$
\end{proof}
We will also need the following explicit calculation of the generic minimal multiplicity in this example.
From now on, we choose $Y$ such that the nef cone $\cN_Y$ and the psef cone $\cE_Y$ coincide (for example, we can take $Y$ to be a Riemann surface).

We denote $I$ a subset of $\{0, \cdots ,r\}$ with complement $J$.
We denote $V_I := \bigcap_{i \in I} D_i=\P(\oplus_{j \in J}L_j)$ and $\rC_I$ the convex envelope of $l_i(i \in I)$.

We observe that the non-nef locus of any psef class is contained in the union of $D_i$.
The reason is as follows:
since $\alpha= \pi^* \beta + \lambda h$ is psef, by Proposition 4 we know that there exist $t_i \in [0,1]$ with $\sum t_i=1$ such that $\beta +\lambda(\sum t_i l_i) \in \cE_Y=\cN_Y$. Hence
$$\alpha=\pi^*(\beta +\lambda(\sum t_i l_i))+\lambda(\sum t_i d_i)$$
is a sum of nef divisor and effective divisor.
(Since $\alpha$ is psef, $\lambda \geq 0$.)
So the non-nef locus of $\alpha$ is contained in the union of $D_i$.
\begin{myprop}
Let $\alpha$ be a big class such that $\alpha= \pi^* \beta + \lambda h$.
The generic minimal multiplicity of $\alpha$ along $V_I$ is equal to
$$\nu(\alpha, V_I)= \min\{t \geq 0, (\beta +t C_I+(\lambda-t)C_J)\cap N_Y \neq \emptyset\}.$$ 
More precisely, we have $\nu(\alpha, V_I)=\nu(\alpha,x)$ for any $x \in V_I \smallsetminus \bigcup_{j \in J}D_j$.
\end{myprop}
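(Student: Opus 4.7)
The plan is to prove the formula by establishing both inequalities and then extracting the pointwise identification from the upper-bound construction itself.

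For the upper bound $\nu(\alpha, V_I) \leq t_0$ (where $t_0$ denotes the minimum in the formula), I would fix any $t \geq t_0$ achieving this minimum and write $\beta + tp_I + (\lambda - t)p_J \in \cN_Y$ with $p_I = \sum_{i \in I} a_i l_i \in C_I$ and $p_J = \sum_{j \in J} b_j l_j \in C_J$ for some $a_i, b_j \geq 0$ summing to $1$. Using $h = d_k + l_k$, I would decompose
$$\alpha = \pi^*\bigl(\beta + tp_I + (\lambda - t)p_J\bigr) + t\sum_{i \in I} a_i d_i + (\lambda - t)\sum_{j \in J} b_j d_j,$$
pick a smooth representative $\theta_\varepsilon \geq -\varepsilon\omega_Y$ of the nef class on the right, and build the explicit current $T_\varepsilon := \pi^*\theta_\varepsilon + t\sum_{i \in I} a_i [D_i] + (\lambda - t)\sum_{j \in J} b_j [D_j] \in \alpha$, with $T_\varepsilon \geq -C\varepsilon\,\omega$. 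At any $x \in V_I \smallsetminus \bigcup_{j \in J} D_j$ only the $[D_i]$ with $i \in I$ contribute (each with Lelong number $1$), so $\nu(T_\varepsilon, x) = t$. This gives $\nu(T_{\min, C\varepsilon}, V_I) \leq t$, and letting $t \to t_0$ yields $\nu(\alpha, V_I) \leq t_0$. Since the same computation gives $\nu(\alpha, x) \leq t_0$ for every such $x$, combined with the trivial $\nu(\alpha, x) \geq \nu(\alpha, V_I)$, one obtains the pointwise equality stated in the proposition.

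For the lower bound $\nu(\alpha, V_I) \geq t_0$, I would take $T_{\min}$ a positive current with minimal singularities in $\alpha$ (it exists by bigness and satisfies $\nu(\alpha, V_I) = \nu(T_{\min}, V_I)$ by Proposition~1(3)), then iterate Siu extraction and restriction along the flag $X = X^{(0)} \supset X^{(1)} \supset \cdots \supset X^{(|I|)} = V_I$ where $X^{(s)} := V_{\{i_1, \ldots, i_s\}}$. Setting $S^{(0)} := T_{\min}$, at each step one extracts the canonical divisorial components
$$S^{(s)} = \sum_{k \notin \{i_1,\ldots,i_s\}} \nu_k^{(s)} [D_k \cap X^{(s)}] + U^{(s)}, \qquad \nu(U^{(s)}, D_k \cap X^{(s)}) = 0,$$
and restricts $S^{(s+1)} := U^{(s)}|_{X^{(s+1)}}$, which is well defined and positive since $X^{(s+1)} = D_{i_{s+1}} \cap X^{(s)}$ has vanishing Lelong number for $U^{(s)}$. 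Using $d_k = h - l_k$ on each $X^{(s)}$, the underlying class stays of Cutkosky form $\pi_{(s)}^*\beta^{(s)} + \lambda^{(s)} h^{(s)}$ on the sub-bundle $X^{(s)} = \P(\oplus_{k \notin \{i_1,\ldots,i_s\}} L_k)$ with explicit updates of $\beta^{(s)}$ and $\lambda^{(s)}$. Applying Proposition~4 on $V_I = \P(\oplus_{j \in J} L_j)$ to the psef class of $S^{(|I|)}$ produces $\tau_j \geq 0$ with $\sum_j \tau_j = 1$ and $\beta^{(|I|)} + \lambda^{(|I|)}\sum_j \tau_j l_j \in \cN_Y$; expanding back yields $c_k \geq 0$ with $\sum_k c_k = \lambda$ and $\beta + \sum_k c_k l_k \in \cN_Y$, satisfying $\sum_{i \in I} c_i = \sum_{s=0}^{|I|-1}\sum_{q > s}\nu_{i_q}^{(s)}$. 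This shows $(\beta + tC_I + (\lambda - t)C_J) \cap \cN_Y \neq \emptyset$ with $t = \sum_{i \in I} c_i$, forcing $t_0 \leq \sum_{i \in I} c_i$. On the Lelong side, computing $\nu(T_{\min}, x)$ at a generic $x \in V_I \smallsetminus \bigcup_j D_j$ by iterating the decomposition and using that restriction preserves Lelong at generic points of divisors of vanishing Lelong number yields the matching identity $\nu(T_{\min}, V_I) \geq \sum_{s=0}^{|I|-1}\sum_{q > s}\nu_{i_q}^{(s)} \geq t_0$, finishing the lower bound.

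The main obstacle is the bookkeeping in this iterated Siu-plus-restriction argument: one must verify that at each step the restrictions remain well-defined positive currents (guaranteed by the Siu extraction killing the Lelong number along the next slice $X^{(s+1)}$), track the class evolution while absorbing any non-canonical Siu contributions (which only add further effective classes helping the lower bound), and confirm that the two triangular sums $\sum_{s < q}\nu_{i_q}^{(s)}$ produced from the class-theoretic and Lelong-theoretic sides genuinely coincide. The structural key making the whole argument close is that each $X^{(s)}$ is itself a Cutkosky-type projective bundle over $Y$, so Proposition~4 applies uniformly at every stage of the iteration.
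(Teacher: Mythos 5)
Your upper bound $\nu(\alpha,V_I)\le t_0$ is correct and is essentially the paper's argument: decompose $\alpha=\pi^*\gamma+t\sum_{i\in I}a_i d_i+(\lambda-t)\sum_{j\in J}b_j d_j$ with $\gamma$ nef and read off the Lelong numbers at a point of $V_I\smallsetminus\bigcup_{j\in J}D_j$. The gap is in your lower bound, specifically in the step ``restriction preserves Lelong at generic points of divisors of vanishing Lelong number.'' This is false. Restriction to a divisor $D$ along which a positive current $U$ has vanishing generic Lelong number can strictly \emph{increase} the generic Lelong number along a deeper stratum of $D$: on $\C^3$ take $u=\max\bigl(\log|z_2|,\,-\sqrt{-\log|z_1|}\bigr)$, $D_1=\{z_1=0\}$, $V=\{z_1=z_2=0\}$. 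Then $\nu(i\d\dbar u,D_1)=0$ and $\nu(i\d\dbar u,x)=\min(1,0)=0$ for every $x\in V$, yet $u|_{D_1}=\log|z_2|$, so $\nu\bigl((i\d\dbar u)|_{D_1},V\bigr)=1$. Consequently your triangular sum $\sum_{s<q}\nu_{i_q}^{(s)}$, computed from iterated restrictions, can exceed $\nu(T_{\min},V_I)$, and the chain $t_0\le\sum_{i\in I}c_i\le\nu(T_{\min},V_I)$ does not close: the first inequality is fine, the second is unjustified. (The class bookkeeping along the flag is otherwise consistent, since each $\{[D_k\cap X^{(s)}]\}=(h-l_k)|_{X^{(s)}}$ keeps the Cutkosky form; the problem is purely the Lelong-number comparison under restriction.)

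The paper avoids this by never iterating restrictions. It blows up $V_I$ once, $\mu:X_I\to X$ with exceptional divisor $E_I=\P(N^*_{V_I/X})=\P(\oplus_{i\in I}\cO_{V_I}(-D_i))$, and uses Lemma~1 to convert the question into the generic Lelong number of $\mu^*\alpha$ along the \emph{divisor} $E_I$. Then $\mu^*T_{\min}-\nu(T_{\min},V_I)[E_I]$ is a positive current (Siu), its restriction to $E_I$ is well defined, so the class $\bigl(\mu^*\alpha-\nu(\alpha,V_I)c_1(\cO(E_I))\bigr)|_{E_I}$ is psef; since $E_I\to V_I$ and $V_I\to Y$ are again Cutkosky bundles, two applications of Proposition~4 translate this psefness into $(\beta+\nu(\alpha,V_I)C_I+(\lambda-\nu(\alpha,V_I))C_J)\cap\cN_Y\neq\emptyset$, i.e.\ $\nu(\alpha,V_I)\ge t_0$. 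If you want to keep your architecture, you would need to replace the iterated restriction by this single codimension-one restriction on the blow-up, or else prove the generic equality of Lelong numbers under restriction in this specific toric-like situation — which is precisely the content that Lemma~1 packages for you.
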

\begin{proof}
Let $\mu: X_I \to X$ the blow-up of $X$ along $V_I$ with exceptional divisor $E_I$.
Hence we have $E_I=\P(N^*_{V_I/X})$ with
$N^*_{V_I/X}=\oplus_{i\in I} \cO_{V_i}(-D_i)$.
By Lemma 1, we get
$$\nu(\alpha, V_I)=\nu(\mu^* \alpha, E_I).$$
Denote by $H_I$ the tautological line bundle over $E_I$ where we have $\cO_{E_I}(-E_I)=H_I$.

For $t \geq 0$, the restriction of $\mu^* \alpha -t c_1(\cO(E_I))$ to $E_I$ is psef and is hence equivalent to that $\mu^* \alpha +tc_1(H_I)$ is psef.
By Proposition 4, the latter is equivalent to the fact that $\alpha+ t \rC(\pi^* l_i-h)=\alpha-th +t \pi^* \rC(l_i)$
intersects $\cE_{V_I}$ where $\rC(l_i)$ is the convex envelop of $l_i$ ($i\in I$).
Note also that 
$$\alpha-th +t \pi^* \rC(l_i)=\pi^*(\beta +t \rC(l_i))+(\lambda-t)h $$
where we denote by the same notation $\pi$ to be the projection from $V_I$ to $Y$ and $h$ to be the first Chern class of the tautological line bundle over $V_I$.
By Proposition 4, it is psef if and only if 
$\beta +t\rC_I +(\lambda-t)C_J$ intersects the psef cone $\cE_Y$.

Since the class $\mu^* \alpha- \nu(\alpha,V_I)c_1(\cO(E_I))$ has positive current
$\mu^* T_{\min} -\nu(T_{\min},V_I)[E_I]$
whose restriction to $E_I$ is well defined by Siu's decomposition theorem.  
By the last paragraph we have
$$\nu(\alpha, V_I)=\nu(T_{\min }, V_I) \geq \min\{t \geq 0, (\beta +t C_I+(\lambda-t)C_J)\cap N_Y \neq \emptyset\}.$$
On the other direction, let $\gamma:= \beta +t \sum_{i \in I}a_i l_i+(\lambda-t)\sum_{j \in J}b_j l_j$ be a psef (equivalently nef by assumption) class on $Y$ with $\sum a_i=\sum b_j=1$. 
Hence $\alpha=\pi^* \gamma +t \sum a_i d_i +(\lambda-t)\sum b_j d_j$.
For $x \in V_I \smallsetminus \bigcup_{j \in J}D_j$,
$$\nu(\alpha,x) \leq t \sum a_i \nu([D_i], x)+(\lambda-t) \sum b_j \nu([D_j],x)\leq t \sum a_i=t.$$
In particular, this shows that
$$\nu(\alpha, V_I)\leq \min\{t \geq 0, (\beta +t C_I+(\lambda-t)C_J)\cap N_Y \neq \emptyset\}.$$ 
By the proof, the equality is attained for $x \in V_I \smallsetminus \bigcup_{j \in J}D_j$.
\end{proof}
We notice that if we use the algebraic analogue in the projective case as in \cite{Nak}, we can weaken the assumption to the case that $\alpha$ is just a psef class.

In particular, Proposition 5 shows that $\cup D_i$ is stratified by the sets $V_I \smallsetminus \bigcup_{j \in J}D_j$ with respect to the generic minimal multiplicity.

Now we are prepared to give our construction.
Let $Y$ as above be a projective manifold such that the nef cone coincides with the psef cone.
Define $X_k= \P(\cO_Y \oplus \cO_Y(A_1) \oplus \cdots \oplus \cO_Y(A_{k+1}))$ where $A_i$ are the ample line bundles over $Y$.
Let $\beta \in H^{1,1}_{BC}(Y,\R)$ be a not-nef class. 
Denote $H$ be the tautological line bundle over $X_k$ and denote $h$ its first Chern class.
Define $\alpha= \pi^* \beta +h$.
We assume that:

For any $i$, $\beta + c_1(A_i)$ is nef and big.

As above, $\P(\cO_Y) \simeq Y$ is a closed submanifold of $X_k$ of codimension $k+1$ via the projection of $\cO_Y \oplus \cO_Y(A_1) \oplus \cdots \oplus \cO_Y(A_{k+1})\to \cO_Y$.
$\alpha$ is psef but not nef on $X_k$ by Proposition 4.
In fact, if $\alpha$ is nef, its restriction to the submanifold $Y$ (i.e. $\beta$) will be nef.
For any subset $I \neq \{1,\cdots r\}$ (taking $L_0:=\cO_Y$),
by Proposition 5, $\nu(\alpha,V_I)=0$ since $\beta + \sum_{j\in J} c_1(A_j)$ is nef which means we can take $t=0$ on the right hand of the equation.
By Proposition 5, $\nu(\alpha, x)$ is constant on $\P(\cO_Y)$.
The non-nef locus can not be empty; otherwise, $\alpha$ would be nef.
Nevertheless, the non-nef locus has to be contained in $\P(\cO_Y)$.  
Hence the constant cannot be zero.

In conclusion, we have $\nu(\alpha,\P(\cO_Y))>0$, which in particular shows that $\alpha$ is not nef in codimension $k+1$.
On the other hand, the non-nef locus is also $\P(\cO(Y))$ which in particular shows that $\alpha$ is nef in codimension $k$.

With the explicit calculation of generic minimal multiplicity, we discuss the optimality of the divisorial Zariski decomposition.
Take $k=1$ in the above construction.
Take $\beta$ to be the first Chern class of some line bundle.
Hence by the above calculation, $\alpha$ is nef in codimension one but not nef in codimension two.
Its non-nef locus is $\P(\cO_Y)$.
For $\alpha$, there doesn't exist a unique decomposition of this psef class $\alpha=c_1(L)$ into a nef in codimension 2 $\R$-divisor $P$ and an effective $\R$-divisor $N$ such that the canonical inclusion $H^0(\lfloor kP \rfloor) \to H^0(kL)$ is an isomorphism for each $k>0$.
Here the round-down of an $\R$-divisor is defined coefficient-wise.
On the contrary, this decomposition will also be the divisorial Zariski decomposition.
However, $\alpha$ is nef in codimension 1, the uniqueness of the divisorial Zariski decomposition shows that the nef in codimension 2 part has to be $\alpha$ itself.
This is a contradiction.
In particular, when $Y$ is a Riemann surface, it gives an example in dimension three where the classical Zariski decomposition does not exist (although it is always possible in dimension 2).

Consider a psef class $\alpha$ on some compact manifold $X$. In general, there does not always exist a composition of finite blow-up(s) of smooth centres $\mu:  \tilde{X}\to X$ such that the nef in codimension 1 part of $\mu^* \alpha$ is in fact nef.
This example is first shown in \cite{Nak}. 


Let $\alpha$ be a big class on a compact K\"ahler manifold $X$.
Assume that there exists no finite composition of blow-up(s) with smooth centres such that the
the nef in codimension 1 part of $\mu^* \alpha$ is in fact nef. 
For example, we can take the pullback of the class constructed by Nakayama on $X$.
We have the following arguments to conclude that in fact there exists no modification such that the
the nef in codimension 1 part of $\mu^* \alpha$ is in fact nef. 
In general, a modification is not necessarily a  composition of blow-up(s) with smooth centres. 
However, by Hironaka's results, any modification is dominated by a finite composition of blow-up(s) with smooth centres. 
In other words, for $\nu: \tilde{X} \to X$ a modification, there exists a commutative diagram
$$
\begin{tikzcd}Y \arrow[rd, "g"]\arrow[r, "f"] & \tilde{X} \arrow[d, "\nu"] \\& X\end{tikzcd}
$$
where $g$ is a finite composition of blow-up(s) with smooth centres and $f$ is holomorphic.
To prove that there exists no modification such that the nef in codimension 1 part of the pullback of some cohomology class is nef,
we have to prove that if $Z(\nu^* \alpha)$ is nef, $Z(g^* \alpha)$ is also nef.
This is done by the following proposition.
It shows in particular that in the above example, if $Z(\nu^* \alpha)$ is nef, $ Z(g^* \alpha)=f^* Z(\nu^* \alpha)$ is also nef.

Notice that the initial argument of Nakayama already proves the non-existence of Zariski decomposition for any modification.
\begin{myprop}
(1) Let $f: Y \to X$ be a holomorphic map between two compact complex manifolds and $\alpha$ be a psef class on $X$. 
Assume that $Z(\alpha)$ is nef.
Then $f^* N(\alpha)\geq  N(f^* \alpha) $ 
where the inequality relation $\geq$ means the difference is a psef class.

(2) Let $f: Y \to X$ be a modification between two compact complex manifolds and $\alpha$ a big class on $X$. 
Then $N(f^* \alpha)\geq f^* N(\alpha).    $ 
\end{myprop}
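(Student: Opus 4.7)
The plan is to compare, for each prime divisor $D \subset Y$, the coefficient $\nu(f^*\alpha, D)$ of $N(f^*\alpha)$ along $D$ with the coefficient of $D$ in the pullback $\mathbb{R}$-divisor $f^*N(\alpha)$.

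For Part $(1)$, I would exploit the subadditivity of $\beta \mapsto \nu(\beta, D)$ on the psef cone from Proposition 1, together with the elementary fact that pull-backs of smooth forms are smooth. Since $Z(\alpha)$ is nef, $f^*Z(\alpha)$ is nef on $Y$, hence modified nef, so $\nu(f^*Z(\alpha), D) = 0$ for every prime divisor $D$ of $Y$. Applying subadditivity to $f^*\alpha = f^*Z(\alpha) + f^*N(\alpha)$ then yields $\nu(f^*\alpha, D) \le \mathrm{coef}_D(f^*N(\alpha))$, so $N(f^*\alpha) \le f^*N(\alpha)$ coefficient-wise; the difference $f^*N(\alpha) - N(f^*\alpha)$ is effective and in particular psef.

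For Part $(2)$, bigness of $\alpha$ is inherited by $f^*\alpha$ via the argument of Lemma 1, so Proposition 1 $(3)$ lets both generic minimal multiplicities be realized by currents with minimal singularities $T^\alpha_{\min}$ and $T^{f^*\alpha}_{\min}$. I first pin down the strict-transform coefficient $\nu(f^*\alpha, \tilde E)$ by playing pullback against pushforward: minimality of $T^{f^*\alpha}_{\min}$ applied to the positive current $f^*T^\alpha_{\min} \in f^*\alpha$ gives $\nu(f^*\alpha, \tilde E) \le \nu(\alpha, E)$ (using that $f$ is a biholomorphism at a generic point of $\tilde E$), while minimality of $T^\alpha_{\min}$ applied to the positive current $f_*T^{f^*\alpha}_{\min} \in \alpha$ gives the reverse inequality (using that at a generic point of $E$ outside the image of the exceptional locus $f$ is a local biholomorphism, so that $\nu(f_*T^{f^*\alpha}_{\min}, E) = \nu(T^{f^*\alpha}_{\min}, \tilde E)$). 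Hence $\nu(f^*\alpha, \tilde E) = \nu(\alpha, E) = \mathrm{coef}_{\tilde E}(f^*N(\alpha))$, and the difference $N(f^*\alpha) - f^*N(\alpha)$ is supported on the exceptional divisors $F_i$ of $f$.

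The main obstacle is then to show that each remaining exceptional coefficient is non-negative, namely $\nu(f^*\alpha, F_i) \ge \sum_E \nu(\alpha, E)\, m_{E, i}$ with $f^*E = \tilde E + \sum_i m_{E, i}F_i$. My plan is to reduce to the case of a single blow-up of a smooth centre via Hironaka's factorization of any modification as a composition of such blow-ups. For a single blow-up of a smooth submanifold $Z \subset X$ with exceptional divisor $F$, Lemma 1 gives $\nu(f^*\alpha, F) = \nu(\alpha, Z)$; pulling the Siu decomposition $T^\alpha_{\min} = N(\alpha) + R^\alpha$ back and evaluating at a generic point of $Z$ shows $\nu(\alpha, Z) = \nu(T^\alpha_{\min}, Z) \ge \sum_E \nu(\alpha, E)\, \mathrm{mult}_Z(E) = \sum_E \nu(\alpha, E)\, m_{E, F}$, the sum running over prime divisors of $X$ containing $Z$. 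Iterating through the factorization of $f$ and combining with the strict-transform equality yields the required exceptional lower bound in full generality, making $N(f^*\alpha) - f^*N(\alpha)$ an effective exceptional $\mathbb{R}$-divisor, hence psef, which completes the proof.
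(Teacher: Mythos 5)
Your part (1) is correct and is essentially the paper's own argument: subadditivity (convexity) of $\nu(\cdot,D)$ applied to $f^*\alpha=f^*Z(\alpha)+f^*N(\alpha)$, vanishing of the negative part of the nef class $f^*Z(\alpha)$, and the observation that $N(f^*N(\alpha))\le f^*N(\alpha)$.

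Part (2) takes a genuinely different route from the paper, and it has a gap. The paper's proof hinges on a descent lemma (Proposition 1.2.7 of Boucksom's thesis, reproved in the text): since the fibres of a modification are compact and connected, every positive current in $f^*\alpha$ is of the form $f^*S$ with $S\in\alpha$ positive, so $f^*T_{\min}$ is the current with minimal singularities in $f^*\alpha$; the required lower bounds on $\nu(f^*\alpha,\cdot)$ along all prime divisors of $Y$ then come from pulling back the Siu decomposition of $T_{\min}$. You never invoke this lemma. Your substitute for the strict-transform coefficients (playing $f^*T^\alpha_{\min}$ against $f_*T^{f^*\alpha}_{\min}$) is correct, and your accounting of the exceptional multiplicities hidden in $f^*N(\alpha)$ is careful. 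The problem is the reduction for the exceptional coefficients: an arbitrary modification between compact complex manifolds is \emph{not} a composition of blow-ups with smooth centres; Hironaka only guarantees that it is \emph{dominated} by one, i.e.\ there exists $g:W\to Y$ such that $f\circ g$ is such a composition. The paper states this explicitly in the paragraph just before the proposition. So there is no factorization of $f$ itself to iterate through, and the step "iterating through the factorization" fails for a general modification (your blow-up computation via Lemma 1 and the Siu decomposition is fine for a single smooth blow-up, and does iterate correctly for genuine compositions of such blow-ups). The argument can be repaired: apply your result to $h=f\circ g$, then push forward by $g$, using your strict-transform identity for the modification $g$ and the big class $f^*\alpha$ to identify $g_*N(h^*\alpha)$ with $N(f^*\alpha)$ and conclude that $N(f^*\alpha)-f^*N(\alpha)=g_*\bigl(N(h^*\alpha)-h^*N(\alpha)\bigr)$ is psef. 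But this extra pushforward step is essential and is missing; without it (or without the paper's descent lemma) you have no lower bound on the Lelong numbers of the minimal current of $f^*\alpha$ along the $f$-exceptional divisors.
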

\begin{proof}
(1) By the convexity of minimal multiplicity along the subvarieties, 
$$ N(f^* \alpha) \leq  N(f^* N(\alpha)) + N(f^* Z(\alpha)). $$
Since $Z(\alpha)$ is nef, $f^* Z(\alpha)$ is also nef, and thus $ N(f^* Z(\alpha))=0 $.
The conclusion follows observing that $ N(f^*N( \alpha)) \leq f^*N(\alpha) $.

(2) We claim that for any positive current $T \in f^* \alpha$, there exists a positive current $S \in \alpha$ such that $T=f^* S$.
It is proven in Proposition 1.2.7 \cite{Bou04} in a more general setting.
For the convenience of the reader, we give proof in this special case.

Fix a smooth representative $\alpha_\infty$ in $\alpha$.
There exists a quasi-psh function $\varphi$ such that $T= f^* \alpha_\infty+i \d \dbar \varphi$.
Let $U$ be an open set of $X$ such that $\alpha_\infty= i \d \dbar v$ on $U$. 
The function $v \circ f+\varphi$ is psh on $f^{-1}(U)$.
All the fibres are compact and connected (the limit of general connected fibre, the points, is still connected); thus, $v \circ f+\varphi$ is constant along the fibres.
Thus there exists a function $\psi$ on $U$ such that $\varphi=\psi \circ f$.
Since $\varphi$ is $L^1_{\loc}$ and $f$ is biholomorphic on a dense Zariski open set, $\psi$ is also $L^1_{\loc}$.
It is easy to see that $\psi$ is independent of the choice of $v$ and is defined on $X$.
Define $S=\alpha_\infty+i \d \dbar \psi$ and we have $T =f^* S$.

In particular, the minimal current in $f^* \alpha$ is the pull back of the minimal current in $\alpha$ $T_{\min}$.
Thus 
$$N(f^* \alpha)=\{\sum \nu(f^* T_{\min},E)[E]\}\geq \{\sum_{\mathrm{codim}(f(E))=1} \nu(f^* T_{\min},E)[E]\}$$
$$=\{\sum_{\mathrm{codim}(f(E))=1} \nu( T_{\min},f(E))[E]\}=f^* N(\alpha)$$
where the sum is taken over all irreducible hypersurfaces of $Y$.
\end{proof}

Let us point out that a current with minimal singularities does not necessarily have analytic singularities for such a big class $\alpha$ that is nef in codimension one but not nef in codimension two; this has been observed by Matsumura \cite{Mat}. 
The reason is as follows.
In such a situation, there exists a modification $\nu: \tilde{X} \to X$ such that the pullback of $\alpha$ has a minimal current of the form $\beta+[E]$ where $\beta$ is a semi-positive smooth form and $[E]$ is the current associated to an effective divisor supported in the exceptional divisor.
In particular, the sum $\{\beta\}+\{[E]\}$ as cohomology class gives the divisorial Zariski decomposition of the class $\nu^* \alpha$.
Remind that for a big class, the Zariski projection of $\alpha$ is given by
$$\alpha-\sum_{D} \nu(T_{\min},D)\{[D]\}$$
where $D$ runs over all the irreducible divisors on $X$ and $T_{\min}$ is the current with minimal singularity in the class $\alpha$ (cf. Proposition 3.6 of \cite{Bou04}).
On the other hand, the push forward $\nu_*$ and pull-pack $\nu^*$ induces isomorphism between $\nu^* \alpha_\infty$-psh functions on $\tilde{X}$ and $\alpha_\infty$-psh functions on $X$ where $\alpha_\infty$ is a smooth element in $\alpha$.
In particular, the pullback of the minimal current of $\alpha$ is the minimal current in $\nu^* \alpha$, which is also a big class.
Hence $\nu^* \alpha$ admits a divisorial Zariski decomposition where the Zariski projection is semi-positive (hence nef).
This contradicts the last paragraph.
\begin{myrem}
{\em 
As a direct consequence of Matsumura's observation,
the assumption in our Kawamata-Viehweg vanishing theorem that the line bundle is numerical equivalent to an effective integral divisor cannot be deduced from the other conditions.
In the nef case as in \cite{DP}, they reduce from that assumption that the line bundle $L$ is nef with $(L^2) \neq 0$ that
$L$ is numerically equivalent to an effective integral divisor $D$ and that there exists a positive singular metric $h$ on $L$  such that $\cI(h)=\cO(-D)$.

Now we show that for a big line bundle $L$ which is nef in codimension 1 but not nef in codimension 2 over a compact K\"ahler manifold $(X, \omega)$, $\langle L^2 \rangle \neq 0$ and $\frac{i}{2 \pi} \Theta(L,h_{})$ is not a current associated to an effective integral divisor. In particular, Nakayama's example  provides such an example.

By the observation of Matsumura, the curvature current of the minimal metric can not even be a current associated with a real divisor.
Since $L$ is big, $\langle L^n \rangle = \mathrm{Vol}(L) \neq 0$. 
By the Teissier-Hovanskii inequalities, we get
$$\langle L^2 \cdot \omega^{n-2} \rangle =\langle L^2 \rangle \cdot \omega^{n-2} \geq \mathrm{Vol}(L)^{2/n} \mathrm{Vol}(\omega)^{(n-2)/n} >0.$$
This shows in particular that $\langle L^2 \rangle \neq 0$.
}
\end{myrem}
\begin{myrem}
{\em 
Let us observe that this kind of construction can also be used to give an example of the manifold with psef anticanonical line bundle, for which the Albanese morphism is not surjective.

According to the knowledge of the author, this kind of question has been first proposed in \cite{DPS93} where the authors ask whether the Albanese map of a compact K\"ahler manifold is surjective under the assumption that the anticanonical line bundle is nef. The statement has been
proven first by P\u{a}un \cite{Paun12} using the positivity of direct image, and then by Junyan Cao \cite{Cao13} via a different and simpler method. In case the manifold is projective, and the anticanonical divisor is nef, this had been proven earlier by Qi Zhang \cite{Zha}.

Let us use the same notation as above. Take $Y$ to be a complex curve of genus larger than 2. 
By a classical result, the Albanese map of $Y$ is the embedding of the curve into its Jacobian variety $\mathrm{Jac}(Y)$. In particular, the Albanese map is not surjective.
Fix $A$ an ample divisor on $Y$.
Define $E= A^{\otimes p} \oplus A^{ \otimes -q }$ where $p, q \in \N$ will be determined latter.
Denote $X = \P(E)$ with $\pi: X \to Y$. 

We claim that the Albanese morphism of $X$ is the composition of the natural projection $\pi$ and the Albanese morphism of $Y$.
The reason is as follows: (cf. Proposition 3.12 in \cite{DPS94})

Since the fibres of $\pi$ are $\P^1$ which is connected and since $\pi$ is differentially a locally trivial fibre bundle, we have $R^0 \pi_* \R_X= \R_Y$, while $R^1 \pi_* \R_X=0$.
We remark that $H^1(\P^1,\R)=0$.
The Leray spectral sequence of the constant sheaf $\R_X$ over $X$ satisfies
$$E^{s,t}_2=H^s(Y,R^t \pi_* \R_X),E^{s,t}_r \Rightarrow H^{s+t}(X,\R).$$
Since $R^1 \pi_* \R_X=0$, the Leray spectral sequence always degenerates in $E_2$.
(In fact, by \cite{Bl56}, the Leray spectral sequence always degenerates in $E_2$ for K\"ahler fibrations.)
Hence we have
$$H^1(X, \R_X) \cong H^1(Y,\R_Y).$$
Since $Y$ and $X$ is compact K\"ahler, we have by Hodge decomposition theorem that
$$H^0(X, \Omega^1_X) \cong H^0(Y,\Omega^1_Y).$$
Since $\pi^*: H^0(Y, \Omega^1_Y) \cong H^0(X,\Omega^1_X)$ is an injective morphism, it induces an isomorphism.
Passing to the quotient, it induces an isomorphism $\pi^*: \Alb(X) \cong \Alb(Y)$.
The claim is proven by the universality of the Albanese morphism:
\[ \begin{tikzcd}
X \arrow{r}{\pi} \arrow{d}{\alpha_X} & Y \arrow{d}{\alpha_Y} \\%
\Alb(X) \arrow{r}{\pi^*}& \Alb(Y).
\end{tikzcd}
\]
We also claim that for well chosen $p,q$, the anticanonical line bundle $-K_X$ is big but not nef in codimension 1. 
In particular, this shows that there exists a compact K\"ahler manifold $X$ such that $-K_X$ is psef but the Albanese morphism is not surjective. Recall that 
$$K_X=\pi^* (K_Y \otimes \mathrm{det} \; E) \otimes \cO_{X}(-2).$$
In particular for $q \gg p$, $-(K_Y \otimes \mathrm{det} \; E)=(q-p)A-K_Y$ is ample. 
On the other hand, $\cO_X(1)$ is big since one of the component in the direct sum bundle $E$ is big.
Thus $-K_X$ is big for $q \gg p$.
On the other hand, the surjective morphism $E \to A^{\otimes p}$ induces the closed immersion $\P(A^{\otimes p})\cong Y \to X$.
We have that $-K_X|_{\P(A^{\otimes p})}= -K_Y-pA$.
For $p$ big enough, we can assume that $-K_Y-pA$ is not psef. 
As a consequence, $-K_X$ is not nef in codimension 1.

In fact, we can calculate the generic minimal multiplicity as
$$\nu(c_1(-K_X),\P(A^{\otimes p}))=\min\{t, -K_Y+(q-p)A+tpA-(2-t)qA \mathrm{\; is \; nef} \}.$$
Since $K_Y$ is ample, we know that the  generic minimal multiplicity along $\P(A^{\otimes p})$ is strictly larger than 1.
In particular, consider any singular metric $h_\varepsilon$ on $-K_X$ such that its curvature satisfies
$i \Theta(-K_X, h_\varepsilon) \geq -\varepsilon \omega$
where $\omega$ is some K\"ahler form on $X$.
Then the multiplier ideal sheaf is not trivial.
Near a point of $\P(A^{\otimes p})$, 
choose some local coordinate such that
$\P(A^{\otimes p})=\{z_1=0\}$. 
By Siu's decomposition,
the local weight of $h_\varepsilon$ is more singular than $\log(|z_1|^2)$.
This implies that
$\cI(h_\varepsilon) \subset \cI_{\P(A^{\otimes p})}$
where $\cI_{\P(A^{\otimes p})}$ is the ideal sheaf associated to $\P(A^{\otimes p})$.

Therefore, some additional condition is certainly needed to ensure the surjectivity of Albanese morphism.
}
\end{myrem}
 
\end{document}